%
%
\documentclass[10pt]{article}
\pdfinfoomitdate1
\pdfsuppressptexinfo-1
\pdftrailerid{}
\usepackage[utf8]{inputenc}
\usepackage[T1]{fontenc}
\usepackage{amsmath,amssymb,dsfont}
\numberwithin{equation}{section}
\usepackage{microtype}
\usepackage{graphicx,tikz,pgfplots}
\graphicspath{{images/}}
\pgfplotsset{compat=newest}
\usepackage[hyperref,amsmath,thmmarks]{ntheorem}
\usepackage{aliascnt}
\usepackage[a4paper,centering,bindingoffset=0cm,marginpar=2cm,margin=2.5cm]{geometry}
\usepackage[pagestyles]{titlesec}
\usepackage[font=footnotesize,format=plain,labelfont=sc,textfont=sl,width=0.75\textwidth,labelsep=period]{caption}






\usepackage{xcolor,colortbl}
\definecolor{ao(english)}{rgb}{0.0, 0.5, 0.0}
\usepackage[pdftex,colorlinks=true,linkcolor=blue,citecolor=ao(english),urlcolor=blue,bookmarks=true,bookmarksnumbered=true]{hyperref}
\hypersetup
{
    bookmarksnumbered,
    breaklinks,
    pdfborderstyle=,
    colorlinks,
    citecolor=[rgb]{0,.5,0},
    linkcolor=[rgb]{0,0,.5},
    urlcolor=[rgb]{0,0,.5},
    pdfauthor={P. Gangl, B. Mejri and O. Scherzer},
    pdfsubject={Subject},
    pdftitle={Vertex characterization via Second-Order Topological Derivatives},
    pdfkeywords={Vertex characterization, second-order method, topological derivative}
}

\newpagestyle{headers}
{
	\headrule
	\sethead[\footnotesize\thepage][\footnotesize\sc P. Gangl, B. Mejri and O. Scherzer][]{}{\footnotesize\sc Vertex characterization via Second-Order Topological Derivatives}{\footnotesize\thepage}
	\setfoot{}{}{}
}
\pagestyle{headers}

\postdisplaypenalty= 1000
\widowpenalty = 1000
\clubpenalty = 1000
\displaywidowpenalty = 1000
\setlength{\parindent}{0pt}
\setlength{\parskip}{1ex}

\newtheorem{lemma}{Lemma}[section]

\newaliascnt{proposition}{lemma}

\aliascntresetthe{proposition}

\newaliascnt{corollary}{lemma}

\aliascntresetthe{corollary}

\newaliascnt{theorem}{lemma}
\newtheorem{theorem}[theorem]{Theorem}
\aliascntresetthe{theorem}

\theorembodyfont{\normalfont}
\newaliascnt{definition}{lemma}
\newtheorem{definition}[definition]{Definition}
\aliascntresetthe{definition}

\newaliascnt{assumption}{lemma}

\aliascntresetthe{assumption}

\theoremstyle{nonumberplain}
\theoremseparator{:}
\theoremheaderfont{\normalfont\itshape}

\newtheorem{remark}{Remark}

\theoremsymbol{\ensuremath{\square}}
\newtheorem{proof}{Proof}

\newcommand{\R}{\mathds{R}}

\let\RE\Re
\let\Re=\undefined
\DeclareMathOperator{\Re}{\RE e}
\let\IM\Im
\let\Im=\undefined
\DeclareMathOperator{\Im}{\IM m}










\title{Vertex characterization via second-order topological derivatives}
\author{Peter Gangl$^1$\\{\footnotesize\href{mailto:email}{peter.gangl@ricam.oeaw.ac.at}}
\and Bochra Mejri$^{1}$\\{\footnotesize\href{mailto:email}{bochra.mejri@ricam.oeaw.ac.at}}
\and Otmar Scherzer$^{1,2,3}$\\{\footnotesize\href{mailto:email}{otmar.scherzer@univie.ac.at}}}
\date{\today}
\usepackage{float,bm,xcolor,colortbl} 
\usepackage{caption,subcaption,tikz}
\usetikzlibrary{angles,quotes}
\usetikzlibrary{fit,calc,positioning}
\usetikzlibrary{positioning}
\usetikzlibrary{decorations.pathreplacing}
\usetikzlibrary{shapes.geometric, arrows}
\usepackage{subcaption}

\usepackage{hyperref} 
\usepackage{algorithm}
\usepackage{algpseudocode}

\usetikzlibrary{calc,shapes}
\newcommand\sqw{1}
\tikzset{
      pics/square/.default={\sqw},
      pics/square/.style = {
        code = {
        \draw[pic actions,draw=none] (0,0) rectangle (#1,#1);
    }}}   
\usepackage{esint}
\usetikzlibrary{fadings}
\tikzfading[name=fade out,
inner color=transparent!0,
outer color=transparent!100]
\newcommand{\dx}{\,\mbox{d}x}

\newcommand{\ue}{u_\varepsilon}
\definecolor{ao(english)}{rgb}{0.0, 0.5, 0.0}
\definecolor{gray(x11gray)}{rgb}{0.75, 0.75, 0.75}
\definecolor{slategray}{rgb}{0.44, 0.5, 0.56}
\definecolor{antiquefuchsia}{rgb}{0.57, 0.36, 0.51}
\definecolor{deepfuchsia}{rgb}{0.76, 0.33, 0.76}
\usepackage{algpseudocode}
\newif\ifpics
\picstrue   
\newif\ifpicsold
\begin{document}

\maketitle
\thispagestyle{empty}
\begin{center}
\hspace*{5em}
\parbox[t]{17em}{\footnotesize
\hspace*{-1ex}$^1$Johann Radon Institute for Computational\\
and Applied Mathematics (RICAM)\\
Altenbergerstraße 69\\
A-4040 Linz, Austria}
\hfil
\parbox[t]{12em}{\footnotesize
\hspace*{-1ex}$^2$Faculty of Mathematics\\
University of Vienna\\
Oskar-Morgenstern-Platz 1\\
A-1090 Vienna, Austria}
\end{center}

\begin{center}
\parbox[t]{19em}{\footnotesize
\hspace*{-1ex}$^3$Christian Doppler Laboratory\\
for Mathematical Modeling and Simulation\\
of Next Generations of Ultrasound Devices (MaMSi)\\
Oskar-Morgenstern-Platz 1\\
A-1090 Vienna, Austria}
\end{center}

\begin{abstract}
This paper focuses on identifying vertex characteristics in 2D images using topological asymptotic analysis. Vertex characteristics include both the location and the type of the vertex, with the latter defined by the number of lines forming it and the corresponding angles. This problem is crucial for computer vision tasks, such as distinguishing between fore- and background objects in 3D scenes. We compute the second-order topological derivative of a Mumford-Shah type functional with respect to inclusion shapes representing various vertex types. This derivative assigns a likelihood to each pixel that a particular vertex type appears there. Numerical tests demonstrate the effectiveness of the proposed approach.
\end{abstract}

\section{Introduction}
\label{introduction}
Vertex characterization plays a crucial role in computer vision for interpreting 3D scenes from 2D images, as discussed by Guzm\'{a}n~\cite{Guz68}. This problem has been approached in the psychology literature such as Clowes~\cite{Clo71}, Kanizsa~\cite{Kan76}, and Cavanagh~\cite{Cav87} and also in the vision literature, see e.g. Nitzberg {\it{et al.}}~\cite{NitMumShi93b}, among others. An important conclusion from this research is that different classes of vertices in 2D images can provide detailed information on the 3D scene. For example, an 'L-corner' indicates an object's corner~\cite{MehNicRan90}, a 'T-junction' represents overlapping objects~\cite{NitMumShi93b}, and an 'X-junction' suggests the occurrence of transparencies~\cite{WatCav93}. Generally, vertices are classified into eight classes, as shown in Fig.~\ref{fig_classification_vertices}; for more details, see~\cite{Guz68, Clo71}.
\ifpics
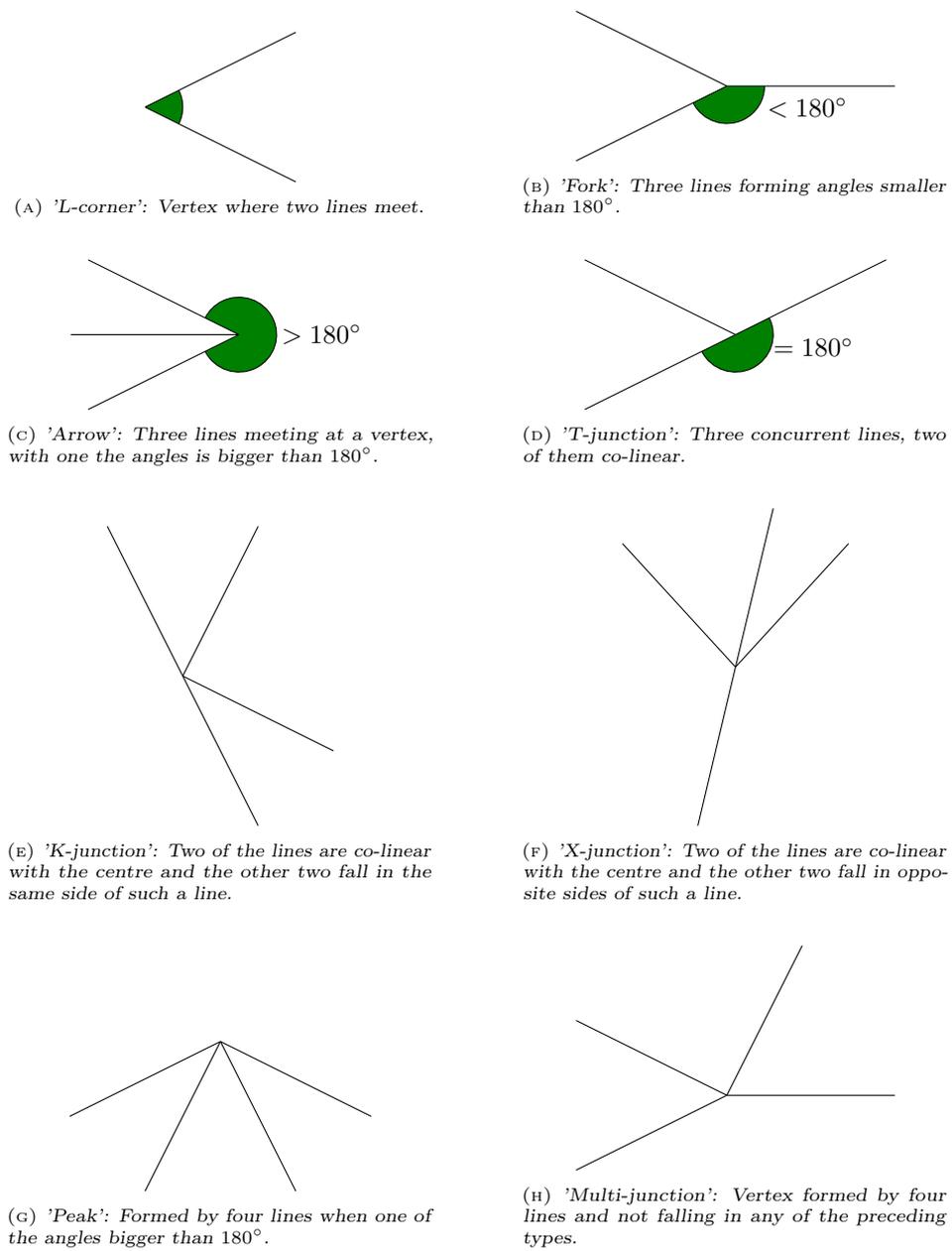
\begin{figure}
\centering
	\begin{subfigure}{0.35\textwidth}
	\centering
		\begin{tikzpicture}
			\coordinate (a) at (0,0);
			\coordinate (b) at (2,1);
			\coordinate (c) at (2,-1);
			\draw pic[draw,fill=ao(english),angle radius=.5cm] {angle=c--a--b};
			\draw (0,0) -- (2,1);
			\draw (0,0) -- (2,-1);
		\end{tikzpicture}
	\caption{'L-corner': Vertex where two lines meet.}
        \end{subfigure}
	\hspace{1cm}
	\begin{subfigure}{0.35\textwidth}
	\centering
		\begin{tikzpicture}
			\coordinate (a) at (0,0);
			\coordinate (b) at (2.23,0);
			\coordinate (c) at (-2,-1);
			\draw pic[draw,fill=ao(english),angle radius=.5cm,"$<180^{\circ}$" shift={(10mm,0mm)}] {angle=c--a--b};
			\draw (0,0) -- (-2,1);
			\draw (0,0) -- (2.23,0);
			\draw (0,0) -- (-2,-1);
		\end{tikzpicture}
	\caption{'Fork': Three lines forming angles smaller than $180^{\circ}$.}
        \end{subfigure}
       \\
       \vspace{.5cm}
	\begin{subfigure}{0.35\textwidth}
	\centering
		\begin{tikzpicture}
			\coordinate (a) at (0,0);
			\coordinate (b) at (-2,1);
			\coordinate (c) at (-2,-1);
			\draw pic[draw,fill=ao(english),angle radius=.5cm,"$>180^{\circ}$" shift={(8mm,0mm)}] {angle=c--a--b};
			\draw (0,0) -- (-2,1);
			\draw (0,0) -- (-2.23,0);
			\draw (0,0) -- (-2,-1);
		\end{tikzpicture}
	\caption{'Arrow': Three lines meeting at a vertex, with one the angles is bigger than $180^{\circ}$.}
        \end{subfigure}
	\hspace{1cm}
	\begin{subfigure}{0.35\textwidth}
	\centering
	\begin{tikzpicture}
		\coordinate (a) at (0,0);
		\coordinate (b) at (2,1);
		\coordinate (c) at (-2,-1);
		\draw pic[draw,fill=ao(english),angle radius=.5cm,"$=180^{\circ}$" shift={(9mm,1mm)}] {angle=c--a--b};
		\draw (0,0) -- (-2,1);
		\draw (0,0) -- (2,1);
		\draw (0,0) -- (-2,-1);
	\end{tikzpicture}
	\caption{'T-junction': Three concurrent lines, two of them co-linear.}
        \end{subfigure}
        \\
       \vspace{.5cm}
	\begin{subfigure}{0.35\textwidth}
	\centering
		\begin{tikzpicture}
			\draw (0,0) -- (-1,2);
			\draw (0,0) -- (1,2);
			\draw (0,0) -- (2,-1);
			\draw (0,0) -- (1,-2);
		\end{tikzpicture}
	\caption{'K-junction': Two of the lines are co-linear with the centre and the other two fall in the same side of such a line.}
        \end{subfigure}
	\hspace{1cm}
	\begin{subfigure}{0.35\textwidth}
	\centering
	\begin{tikzpicture}
		\draw (0,0) -- (.5,2.12);
		\draw (0,0) -- (-.5,-2.12);
		\draw (0,0) -- (-1.5,1.65);
		\draw (0,0) -- (1.5,1.65);
	\end{tikzpicture}
	\caption{'X-junction': Two of the lines are co-linear with the centre and the other two fall in opposite sides of such a line.}
        \end{subfigure}
        \\
       \vspace{.5cm}
	\begin{subfigure}{0.35\textwidth}
	\centering
		\begin{tikzpicture}
			\draw (0,0) -- (-1,-2);
			\draw (0,0) -- (1,-2);
			\draw (0,0) -- (-2,-1);
			\draw (0,0) -- (2,-1);
		\end{tikzpicture}
	\caption{'Peak': Formed by four lines when one of the angles bigger than $180^{\circ}$.}
        \end{subfigure}
	\hspace{1cm}
	\begin{subfigure}{0.35\textwidth}
	\centering
	\begin{tikzpicture}
		\draw (0,0) -- (1,2);
		\draw (0,0) -- (-2,1);
		\draw (0,0) -- (-2,-1);
		\draw (0,0) -- (2.23,0);
	\end{tikzpicture}
	\caption{'Multi-junction': Vertex formed by four lines and not falling in any of the preceding types.}
        \end{subfigure}
\caption{Classification of vertices~\cite{Guz68}.}
\label{fig_classification_vertices}
\end{figure}
\fi
To highlight the appearance of the particular vertices represented in Fig.~\ref{fig_classification_vertices}, we consider two 3D scenes: Fig.~\ref{fig_cube_example_vertices_classification}(a) depicts a cube, and Fig.~\ref{fig_cube_example_vertices_classification} (b) illustrates a cube partially occluding a bar. In the first case (a), the cube features vertices such as 'L-corner', 'Fork', and 'Arrow' (see Fig.~\ref{fig_classification_vertices}(a)-(c)). In contrast, in the occlusion scenario (b), we also observe 'T-junction' vertices (see Fig.~\ref{fig_classification_vertices}(d)).
\renewcommand\sqw{.325}
\ifpics
\begin{figure}
\centering
\begin{tikzpicture}
\node at (3.5,-.25) {(a)};
\draw[fill=gray(x11gray)] (4,7,4) -- (7,7,4) -- (7,4,4) -- (7,4,7) -- (4,4,7) -- (4,7,7) -- cycle;
\draw[fill=gray(x11gray)] (7,7,7) -- (4,7,7) (7,7,7) -- (7,4,7) (7,7,7) -- (7,7,4);
\draw[step=\sqw, style={thin,gray,opacity=.5}] (\sqw,\sqw) grid +(20*\sqw,20*\sqw);
\node at (5.5,5.5,7) {$R_1$};
\node at (5.5,7,5.5) {$R_2$};
\node at (7,5.5,5.5) {$R_3$};
\node at (7.5,3.25,5.5) {$R_4$};
\node at (3.9,4.1,7.25) {A};
\draw[red,line cap=round,line width=1pt] (4,4,7) -- (4,4.5,7);
\draw[red,line cap=round,line width=1pt] (4,4,7) -- (4.5,4,7);
\node at (3.9,7.1,7.25) {B};
\draw[red,line cap=round,line width=1pt] (4,7,7) -- (4,7,6.25);
\draw[red,line cap=round,line width=1pt] (4,7,7) -- (4.5,7,7);
\draw[red,line cap=round,line width=1pt] (4,6.5,7) -- (4,7,7);
\node at (4,7.25,4.1) {C};
\draw[red,line cap=round,line width=1pt] (4,7,4) -- (4,7,4.75);
\draw[red,line cap=round,line width=1pt] (4,7,4) -- (4.5,7,4);
\node at (7.1,7.25,4.1) {D};
\draw[red,line cap=round,line width=1pt] (7,7,4) -- (6.5,7,4);
\draw[red,line cap=round,line width=1pt] (7,7,4) -- (7,6.5,4);
\draw[red,line cap=round,line width=1pt] (7,7,4) -- (7,7,4.75);
\node at (6.9,7.25,7) {E};
\draw[red,line cap=round,line width=1pt] (7,7,7) -- (6.5,7,7);
\draw[red,line cap=round,line width=1pt] (7,7,7) -- (7,6.5,7);
\draw[red,line cap=round,line width=1pt] (7,7,7) -- (7,7,6.25);
\node at (7.2,4,4) {F};
\draw[red,line cap=round,line width=1pt] (7,4,4) -- (7,4.5,4);
\draw[red,line cap=round,line width=1pt] (7,4,4) -- (7,4,4.75);
\node at (7.2,3.9,7) {G};
\draw[red,line cap=round,line width=1pt] (7,4,7) -- (7,4.5,7);
\draw[red,line cap=round,line width=1pt] (6.5,4,7) -- (7,4,7);
\draw[red,line cap=round,line width=1pt] (7,4,7) -- (7,4,6.25);
\end{tikzpicture}
\hspace{1cm}
\begin{tikzpicture}
\node at (3.5,-.25) {(b)};
\draw[fill=ao(english)] (6,8,6) -- (7,8,6) -- (7,3.75,6) -- (7,3.75,7) -- (6,3.75,7) -- (6,8,7) -- cycle;
\draw[fill=ao(english)] (7,8,7) -- (6,8,7) (7,8,7) -- (7,3.75,7) (7,8,7) -- (7,8,6);
\draw[fill=gray(x11gray)] (4,6,4) -- (6,6,4) -- (6,4,4) -- (6,4,6) -- (4,4,6) -- (4,6,6) -- cycle;
\draw[fill=gray(x11gray)] (6,6,6) -- (4,6,6) (6,6,6) -- (6,4,6) (6,6,6) -- (6,6,4);
\draw[step=\sqw, style={thin,gray,opacity=.5}] (\sqw,\sqw) grid +(20*\sqw,20*\sqw);
\node at (5.5,5.5,7) {$R_1$};
\node at (5,6,5) {$R_2$};
\node at (6,5,5) {$R_3$};
\node at (6.5,4.125,7) {$R_4$};
\node at (6.5,8,6.5) {$R_5$};
\node at (7,4.125,6.5) {\small{$R_6$}};
\node at (8,5,6.5) {$R_7$};
\node at (3.125,4.75) {$T_1$};
\draw[red,line cap=round,line width=1pt] (4.725,6,4) -- (4.975,6,4);
\draw[red,line cap=round,line width=1pt] (6,7.17,7) -- (6,7.42,7);
\node at (4.125,4.75) {$T_2$};
\draw[red,line cap=round,line width=1pt] (5.72,6,4) -- (5.97,6,4);
\draw[red,line cap=round,line width=1pt] (7,7.17,7) -- (7,7.42,7);
\node at (3.125,1.5) {$T_3$};
\draw[red,line cap=round,line width=1pt] (5.5,4,6) -- (5.75,4,6);
\draw[red,line cap=round,line width=1pt] (6,4.125,7) -- (6,4.375,7);
\node at (4.125,2) {$T_4$};
\draw[red,line cap=round,line width=1pt] (6,4,4.275) -- (6,4,4.525);
\draw[red,line cap=round,line width=1pt] (7,4.75,7) -- (7,5,7);
\end{tikzpicture}
\caption{Examples of classification of vertices. (a) - Cube: 'L-corner', 'Fork' and 'Arrow' vertices. (b) - Overlapping cubes: 'T-junction' vertex.}
\label{fig_cube_example_vertices_classification}
\end{figure}
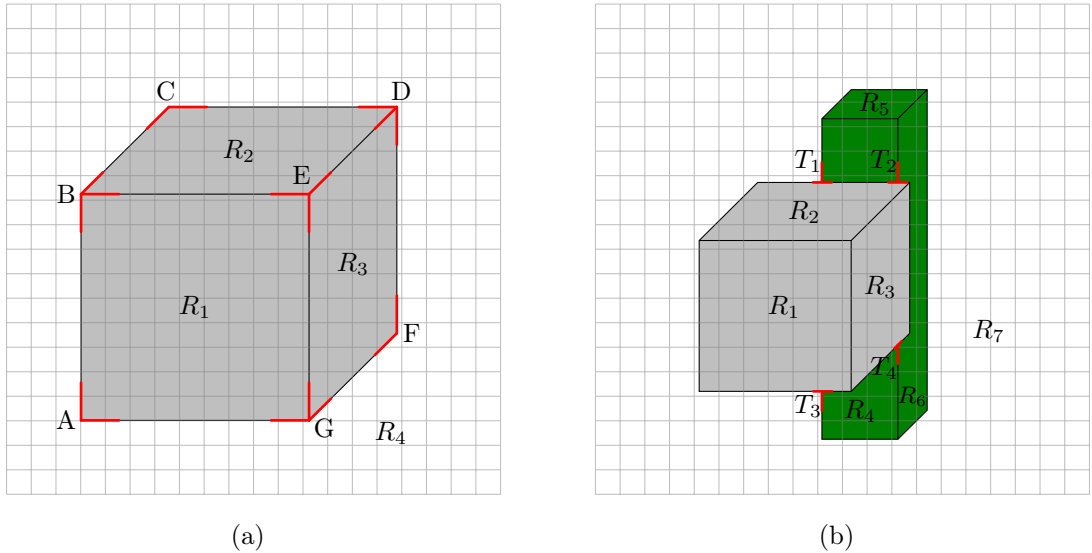
\fi

Topological asymptotic analysis has been incorporated in a broad spectrum of image processing tasks~\cite{BelJaoMasSia06,LarFeiNovTar08,Mus09,AurCohMas11,AmsFeh15}. This approach involves evaluating the sensitivity of a misfit function with respect to an infinitesimal variation of a trial defect (e.g., hole, crack, inclusion, etc.)~\cite{GarGuiMas01b,AmsHorMas05,Bon09}, among others. The most negative value of the topological derivative map is likely to be located at the zone where the defect exists. Belaid {\it{et al.}}~\cite{BelJaoMasSia06} adapted this concept for edge detection, originally used for {\it{crack detection}}~\cite{AmsHorMas05}, where edges can be viewed as a {\it{set of singularities}} in the image intensity function. Subsequently, a first-order topological derivative of a Mumford-Shah-type functional~\cite{MumSha89c} with respect to {\it{circular}} perturbations~\cite{Mus09,GraMusSch13} was developed. Additionally, the detection of edges at various scales (i.e. different magnitude of discontinuities) using topological derivatives was treated in the paper~\cite{DonGraKanSch13}. In Beretta {\it{et al.}}~\cite{BerGraMusSch14}, the authors extended this approach to the case where the image's edges are covered by a set of {\it{line segments}} rather than a set of {\it{points}}. However, the first-order topological derivative might not provide sufficient information or accuracy for detecting complex geometrical defects. To overcome these limitations, the second-order topological derivative was introduced for detecting finite-size holes~\cite{RocNovFeiTarPad07,RocNov09}, inclusions~\cite{Bon09,BonCor17}; see also~\cite{NovSokZoc18} for an overview of the application of second-order topological derivatives in inverse problems. Second-order topological derivatives have not been applied to computer vision for vertex characterization.

In this paper, we consider vertex detection using topological asymptotic analysis with respect to inclusion shapes, which represent the vertices illustrated in Fig.~\ref{fig_classification_vertices}. The novelty of this approach lies in the calculation of the second-order topological derivative associated with a Mumford-Shah-like functional with respect to an arbitrary inclusion shape, which we apply to the various classes of vertices from Fig.~\ref{fig_classification_vertices}. This second-order topological derivative is used as an indicator function, estimating the likelihood that a certain pixel in the 2D image belongs to a specific vertex type. Mathematically, these indicators provide not only the vertex's location but also its type, i.e., the number of lines forming it and the corresponding angles. 

This paper is structured as follows. Section~\ref{model_problem} reviews the concept of topological asymptotic analysis based on a Mumford-Shah type functional. The topological sensitivity method is defined and presented for the modified Mumford-Shah type functional in Section~\ref{preliminaries}. Then, we derive the asymptotic expansion of the perturbed state solution in Section~\ref{analysis_perturbed_state_equation}. The first and second-order topological derivatives are computed explicitly with respect to arbitrary inclusions shape, in particular, vertex types from Fig.~\ref{fig_classification_vertices}, in Section~\ref{topological_derivatives_cost_function}. Section~\ref{numerical_algorithm} presents the developed algorithm for detecting inclusions, covering known and unknown inclusion shapes. The numerical results are discussed, in Section~\ref{numerical_experiments}, for the two 3D scenes given in Fig.~\ref{fig_cube_example_vertices_classification}.     

\section{Model problem}
\label{model_problem}
Consider $D\subset\R^2$ as an open bounded Lipschitz domain and $f:D\rightarrow\R$ a given function in $L^\infty(D)$ representing an image. The Mumford-Shah functional
\[
F(u,K):=\frac12\int_{D}(u-f)^2\dx+\frac{\alpha}{2}\int_{D\setminus K}|\nabla u|^2\dx+\beta\mathcal{H}^1(K),
\] 
aims at segmenting an image $f$ by minimizing $F(u, K)$ over all smooth functions $u$ defined in $D$ and all curves $K\subset D$. Here, $\mathcal{H}^1(K)$ denotes the one-dimensional Hausdorff measure of the set of edges $K$ (i.e. if $K$ is a regular one-dimensional set, it will be equal to its length), and $\alpha$, $\beta$ are positive weights. This functional measures the matching between a given image data $f$ and an approximation $u$, where the first term imposes $u$ to approximate $f$ and the second one controls the variation of $u$ away from the edge set $K$, for more details see~\cite{MumSha89c}.

In this work, we follow the approach of Grasmair {\it{et al.}}~\cite{GraMusSch13} and Beretta {\it{et al.}}~\cite{BerGraMusSch14} where the cost function
\begin{align} 
\label{eq_Jeps}
J_\varepsilon(u,v_K):=\frac12\int_{D}(u-f)^2\dx+\frac{\alpha}{2}\int_{D}v_K|\nabla u|^2\dx+2\beta\varepsilon m_\varepsilon(v_K),
\end{align}
was considered as an approximation of the Mumford-Shah functional $F(u,K)$ in the sense of $\Gamma-$convergence. Here, the considered functional is minimized over all the functions $u\in H^1(D)$ and $v_K\in L^\infty(D)$, where $v_K:D\rightarrow\R$ is a piecewise constant edge indicator defined by
\[  
v_K(x):=
    \begin{cases}
      \kappa,&x\in K, \\
    1,&x\in D\setminus\overline{K},
    \end{cases}
\]
where $K\subset D$ and $0<\kappa<1$. In~\eqref{eq_Jeps}, $m_\varepsilon(v_K)$ denotes the number of inclusions of size $\varepsilon>0$ that are required to cover the edge represented by $v_K$, for $\varepsilon$ small enough and $K$ sufficiently regular. In both these works, the first-order topological derivative of the cost function~\eqref{eq_Jeps} was computed and used in an iterative algorithm in order to detect the edge sets of an image. While in Grasmair {\it{et al.}}~\cite{GraMusSch13} the edge set is covered by ball-shaped inclusions, in Beretta {\it{et al.}}~\cite{BerGraMusSch14} the edges are covered with a finite number of thin strips rather than accumulations of points, resulting in a faster algorithm.

In this work, in contrast, we are not interested in an iterative algorithm but rather employ a one-shot method based on second-order topological derivatives in order to locate vertices and extract their characteristics.
Therefore, we define the following cost functional inspired by the Mumford-Shah model,
\begin{equation}
\label{unper_cost_func}
J(u,\Omega):=\frac12\int_D(u-f)^2+\alpha\lambda_\Omega|\nabla u|^2\dx,
\end{equation}
where $u\in H^1(D)$ and $\lambda_\Omega\in L^\infty(D)$. For $\Omega$ a fixed open subset of $D$, we define the function $\lambda_\Omega:D\rightarrow\R$ by
\[  
\lambda_{\Omega}(x)=
    \begin{cases}
      \lambda^{\text{in}},&x\in\Omega,\\
      \lambda^{\text{out}},&x\in D\setminus\overline{\Omega},
    \end{cases}
\]
where $0<\lambda^{\text{in}} < \lambda^{\text{out}}$ are positive constants representing the material distributions.

For a fixed admissible shape $\Omega\subset D$, the unique minimizer of the convex functional
\begin{align*}
    J(\cdot, \Omega): H^1(D) \rightarrow &\mathbb R \\
    u \mapsto& J(u, \Omega)
\end{align*}
is the unique solution $u \in H^1(D)$ to the boundary value problem
\begin{equation}
\int_D \alpha\lambda_\Omega\nabla u\cdot\nabla v + uv\dx= \int_D fv \dx,\quad\quad\forall v \in H^1(D),
\label{unper_state_vari_form}
\end{equation}
which, in its strong form, reads
\begin{align} \label{eq_state_strongform}
    \left\{\begin{aligned}
     -\alpha\mbox{ div}(\lambda_\Omega\nabla u) + u=&f&&\mbox{in }D,                  \\
     \frac{\partial u}{\partial n}=&0&&\mbox{on }\partial D.
    \end{aligned} \right.
\end{align}
For a given $\Omega \subset D$, defining $u(\Omega)$ as the unique solution to~\eqref{unper_state_vari_form}, we can introduce the reduced cost functional as
\begin{align} 
\label{eq_reducedCost}
    \mathcal{J}(\Omega):=J(u(\Omega),\Omega).
\end{align}
\begin{remark}
Note that for a given $\Omega \subset D$ and the solution $u(\Omega)$ of~\eqref{unper_state_vari_form}, it holds for all $v \in H^1(D)$ that
\begin{align}
\label{eq_Jprimezero}
    J'(u(\Omega), \Omega)(v) = 0.
\end{align}
Also note that when considering~\eqref{unper_cost_func} together with~\eqref{unper_state_vari_form} as a PDE-constrained optimization problem, the right-hand side of the corresponding adjoint equation would be given by~\eqref{eq_Jprimezero} and, thus, the adjoint state would vanish.
\end{remark}

\section{Asymptotic expansion of $\mathcal{J}$}
\label{asymptotic_expansion_J}
In this section, we consider perturbed domains $\Omega_\varepsilon$ obtained from $\Omega$ by adding or removing small inclusions $\omega_\varepsilon(z)$ of size $\varepsilon$ centered around a spatial point $z\in \Omega \cup (D\setminus \overline \Omega)$.
Our goal is to establish an asymptotic expansion for $\mathcal{J}(\Omega_\varepsilon)-\mathcal{J}(\Omega)$ in powers of $\varepsilon$ as $\varepsilon\rightarrow 0$ using first and second-order topological derivatives.

\subsection{Preliminaries}
\label{preliminaries}
Let $\omega\subset\R^2$ with $\bm{0}\in\omega$ represent the shape of the considered perturbation and let $z\in\Omega\cup(D\setminus\overline{\Omega})$. For $\varepsilon>0$, we define the perturbation of shape $\omega$ and size $\varepsilon$ as $\omega_\varepsilon(z):=z+\varepsilon\omega$.
\begin{definition}\label{definition_topological_derivative} (Topological derivative). The first-order topological derivative of a shape function $\mathcal{J}$ at the point $z\in\Omega\cup(D\setminus\overline{\Omega})$ with respect to the inclusion shape $\omega$ is defined by
\begin{equation}
\label{definition_TD1}
d\mathcal{J}(\Omega)(z,\omega):=
    \left\{\begin{aligned}
    &\lim_{\varepsilon\rightarrow 0}\frac{1}{|\omega_\varepsilon(z)|}(\mathcal{J}(\Omega\setminus\overline{\omega}_\varepsilon(z))-\mathcal{J}(\Omega)),&z\in\Omega,\\
    &\lim_{\varepsilon\rightarrow 0}\frac{1}{|\omega_\varepsilon(z)|}(\mathcal{J}(\Omega\cup\omega_\varepsilon(z))-\mathcal{J}(\Omega)),&z\in D\setminus\overline{\Omega}.
    \end{aligned} \right.
\end{equation}
The second-order topological derivative of a shape function $\mathcal{J}$ at the point $z\in\Omega\cup(D\setminus\overline{\Omega})$ with respect to the inclusion shape $\omega$ is defined by
\begin{equation}
\label{definition_TD2}
d^2\mathcal{J}(\Omega)(z,\omega):=
    \left\{\begin{aligned}
    &\lim_{\varepsilon\rightarrow 0}\frac{1}{\varepsilon|\omega_\varepsilon(z)|}(\mathcal{J}(\Omega\setminus\overline{\omega}_\varepsilon(z))-\mathcal{J}(\Omega)-|\omega_\varepsilon(z)|d\mathcal{J}(\Omega)(z,\omega)),&z\in\Omega,\\
    &\lim_{\varepsilon\rightarrow 0}\frac{1}{\varepsilon|\omega_\varepsilon(z)|}(\mathcal{J}(\Omega\cup\omega_\varepsilon(z))-\mathcal{J}(\Omega)-|\omega_\varepsilon(z)|d\mathcal{J}(\Omega)(z,\omega)),&z\in D\setminus\overline{\Omega}.
    \end{aligned} \right.
\end{equation}
\end{definition}
For the rest of this section, we will fix a spatial point $z \in D \setminus \overline \Omega$ and use the shorthand notation $\omega_\varepsilon$ instead of $\omega_\varepsilon(z)$. We also define the perturbed domain $\Omega_\varepsilon:= \Omega \cup \omega_\varepsilon$. Moreover, we assume that the solution to the unperturbed problem~\eqref{unper_state_vari_form} is smooth enough near the point of perturbation $z$, i.e., $u \in C^2(B_\delta(z))$ for some $\delta >0$ where $B_\delta(z)$ denotes the ball of radius $\delta$ centered at the point $z$.
\begin{remark}
By Definition~\ref{definition_topological_derivative}, the first and second-order topological derivatives satisfy the topological asymptotic expansion
 \[
\mathcal{J}(\Omega_\varepsilon)=\mathcal{J}(\Omega)+\ell_1(\varepsilon) d\mathcal{J}(\Omega)(z,\omega)+ \ell_2(\varepsilon) d^2\mathcal{J}(\Omega)(z,\omega)+o(\ell_2(\varepsilon)). 
 \]
 with $\ell_1(\varepsilon) = |\omega_\varepsilon|$ and $\ell_2(\varepsilon) = \varepsilon|\omega_\varepsilon|$. We remark that, depending on the problem setting at hand (e.g. on the choice of boundary conditions on the boundary of the perturbation $\partial \omega_\varepsilon$), first and second-order topological derivatives are defined with different choices of $\ell_1(\varepsilon)$, $\ell_2(\varepsilon)$ satisfying $\ell_2(\varepsilon)/\ell_1(\varepsilon) = o(1)$ as $\varepsilon \rightarrow 0$.
\end{remark}

First-order topological derivatives of shape optimization problems with linear PDE constraints have been studied for a long time, e.g. the early works~\cite{SokZoc99,GarGuiMas01b} or related work on asymptotic analysis of PDEs~\cite{KozMazMov99}. When the principal part of a PDE is perturbed, as is the case in~\eqref{unper_state_vari_form}, the corresponding topological derivative typically involves the solution to an exterior problem. If the inclusion shape $\omega$ is a disk or ellipse in 2D or a ball or ellipsoid in 3D, this exterior problem can be solved explicitly and closed-form formulas for the corresponding topological derivative can be obtained~\cite{Ams06}. While for other inclusion shapes or quasilinear PDE constraints, the exterior problem can no longer be solved explicitly, the topological derivative can still be obtained in terms of the solution to the exterior problem and numerical approximations can be computed~\cite{AmsGan19,GanStu21,GanNeeSti24}. Second and higher-order topological derivatives have been studied by several authors~\cite{CanNovRoc14,BonCor17,NovSokZoc18,BauStu22}. 

In this work, we consider first and second-order topological derivatives with respect to certain polygonal inclusion shapes in order to detect vertices in images, see Fig.~\ref{fig_conf_pixels}(a) and (b) for an exemplary situation of an unperturbed and perturbed setting, respectively.
\ifpics
\begin{figure}
\centering
	\begin{tikzpicture}
		\draw[fill=gray(x11gray)] (0.2,0.2) -- (0.2,4.2) -- (4.2,4.2) -- (4.2,0.2) -- cycle;
		\draw[fill=red] (2.2,2.2) -- (3.2,2.2) -- (3.2,3.2) -- (2.7,3.2) -- (2.7,2.7) -- (2.2,2.7) -- cycle;
		\draw (1.5,4) node[anchor=north east] {$D\setminus\Omega$};
		\draw (3,2.7) node[anchor=north east] {$\Omega$};
		\node at (1.75,.75) {.};
		\node at (1.9,.75) {$z$};
		\draw[->,dashed,thick] (.2,0) -- (.2,4.4);
		\draw[->,dashed,thick] (0,.2) -- (4.4,.2);
		\node at (2,-.25) {(a)};
	\end{tikzpicture}
	\hspace{2cm}
	\begin{tikzpicture}
		\draw[fill=gray(x11gray)] (0.2,0.2) -- (0.2,4.2) -- (4.2,4.2) -- (4.2,0.2) -- cycle;
		\draw[fill=red] (2.2,2.2) -- (3.2,2.2) -- (3.2,3.2) -- (2.7,3.2) -- (2.7,2.7) -- (2.2,2.7) -- cycle;
		\draw[fill=red] (1.4375,.6875) -- (1.8125,.6875) -- (1.8125,1.0625) -- (1.6875,1.0625) -- (1.6875,.8125) -- (1.4375,.8125) -- cycle;
		\draw (1.5,4) node[anchor=north east] {$D\setminus\Omega$};
		\draw (3,2.7) node[anchor=north east] {$\Omega$};
		\node at (1.75,.75) {.};
		\node at (1.95,.75) {$z$};
		\node at (1.85,1.2) {$\omega_\varepsilon$};
		\draw[->,dashed,thick] (.2,0) -- (.2,4.4);
		\draw[->,dashed,thick] (0,.2) -- (4.4,.2);
		\node at (2,-.25) {(b)};
	\end{tikzpicture}
	\caption{(a) Unperturbed configuration. (b) Perturbed configuration where the domain is perturbed in an inclusion shape $\omega_\varepsilon$ whose center is the point $z$.}
\label{fig_conf_pixels}
\end{figure}
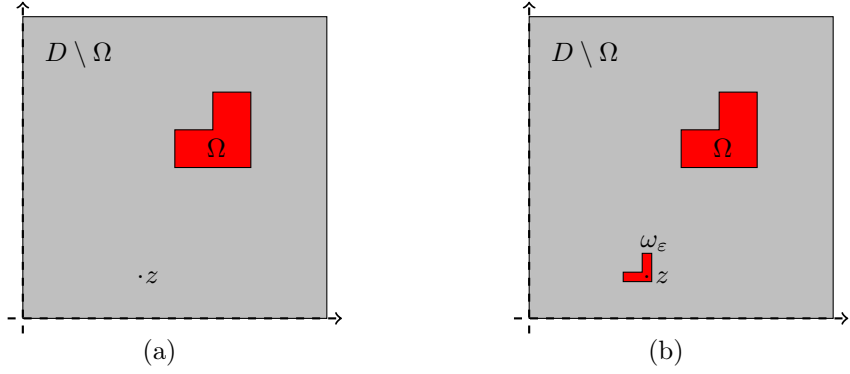
\fi

With the notation introduced above, the perturbed cost functional is given by
\begin{equation}
\label{per_cost_func}
\mathcal{J}(\Omega_\varepsilon)=J(u(\Omega_\varepsilon),\Omega_\varepsilon)=\frac12\int_D(\ue-f)^2+\alpha\lambda_{\Omega_\varepsilon}|\nabla \ue|^2\dx,
\end{equation}
where $\lambda_{\Omega_\varepsilon}:D\rightarrow\R$ is given by
\[
\lambda_{\Omega_\varepsilon}(x)=
    \left\{\begin{aligned}
      &\lambda^{\text{in}},&x\in\Omega_\varepsilon,
    \\&\lambda^{\text{out}},&x\in D\setminus\overline{\Omega}_\varepsilon,
    \end{aligned} \right.
\]
and $\ue:= u(\Omega_\varepsilon) \in H^1(D)$ satisfies the variational formulation
\begin{equation}
\label{per_state_vari_form}
\int_D\alpha\lambda_{\Omega_\varepsilon}\nabla\ue\cdot\nabla v + \ue v\dx= \int_D f v \dx,\quad\quad\forall v \in H^1(D),
\end{equation}
which can be written in its strong form as
\[
    \left\{\begin{aligned}
    -\alpha\mbox{ div}(\lambda_{\Omega_\varepsilon}\nabla \ue)+ \ue=&f&&\mbox{in }D,                   \\
    \frac{\partial \ue}{\partial n}=&0&&\mbox{on }\partial D.
    \end{aligned} \right.
\]

\subsection{Analysis of the perturbed state equation}
\label{analysis_perturbed_state_equation}
The key ingredient to computing topological derivatives in the presence of PDEs is the asymptotic analysis of the solution to the perturbed problem. Here, we will follow the publication~\cite[Sec. 3]{BauStu22} where the authors perform the rigorous analysis up to second-order for a linear elasticity problem. The results on the asymptotic can be transferred from the vector-valued setting considered there to the case of the scalar elliptic PDE~\eqref{per_state_vari_form} in our setting in a straightforward way. Thus, for the sake of a compact presentation, we will not repeat the proofs of Lemma~\ref{lem_Keps_to_K} and Lemma~\ref{lem_K2eps_to_K2} and just state the results here.

We define the affine transformation $\phi_{\varepsilon}:\omega\rightarrow\omega_\varepsilon,x\mapsto z+\varepsilon x$ satisfying $\phi_{\varepsilon}(\omega)=\omega_\varepsilon$ and introduce the notation $D_\varepsilon := \phi_\varepsilon^{-1}(D)$ for the rescaled domain, see also Fig.~\ref{fig_rescaled_perturbed_domain}.
\begin{definition} 
\label{def_Keps}
The first variation of the state $\ue$ is defined by
\begin{equation}
\label{first_vari_state}
K^{(1)}_{\varepsilon,\omega} :=\left(\frac{\ue-u}{\varepsilon}\right)\circ\phi_\varepsilon \in H^1(D_\varepsilon),\quad\quad\text{for }\varepsilon>0.
\end{equation}
\end{definition}
\begin{lemma}
    The function $K_{\varepsilon, \omega}^{(1)} \in H^1(D_\varepsilon)$ defined in Definition~\ref{def_Keps} is the unique solution to
    \begin{equation}
        \label{firstvari__vari_form}
        \int_{D_\varepsilon}\alpha\lambda_{\omega \cup \phi_\varepsilon^{-1}(\Omega)}\nabla K^{(1)}_{\varepsilon,\omega}\cdot\nabla v + \varepsilon K^{(1)}_{\varepsilon,\omega}v\dx=-\alpha(\lambda^{\text{in}}-\lambda^{\text{out}})\int_{\omega}\nabla u\circ\phi_{\varepsilon}\cdot\nabla v\dx,
    \end{equation}
    for all $v \in H^1(D_\varepsilon)$. Moreover, $\nabla K_{\varepsilon, \omega}^{(1)}$ is bounded, i.e., there exists $C>0$ such that
    \begin{align} \label{eq_Keps_bounded}
        \|\nabla K_{\varepsilon, \omega}^{(1)} \|_{L^2(D_\varepsilon)} \leq C.
    \end{align}
\end{lemma}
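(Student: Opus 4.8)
The plan is to obtain \eqref{firstvari__vari_form} by subtracting the unperturbed equation \eqref{unper_state_vari_form} from the perturbed one \eqref{per_state_vari_form} and rescaling to the fixed geometry through $\phi_\varepsilon$; uniqueness of the solution then comes from Lax--Milgram, and the uniform estimate \eqref{eq_Keps_bounded} from an energy argument combined with the assumed $C^2$-regularity of $u$ near $z$.

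First I would subtract the two state equations. Testing \eqref{per_state_vari_form} and \eqref{unper_state_vari_form} with the same $\psi \in H^1(D)$ and subtracting gives, for $w_\varepsilon := \ue - u$,
\[
\int_D \alpha \lambda_{\Omega_\varepsilon}\nabla w_\varepsilon\cdot\nabla\psi + w_\varepsilon\psi \dx = -\int_D \alpha(\lambda_{\Omega_\varepsilon}-\lambda_\Omega)\nabla u\cdot\nabla\psi\dx.
\]
The crucial observation is that, since $z\in D\setminus\overline\Omega$ and hence $\omega_\varepsilon\subset D\setminus\overline\Omega$ for $\varepsilon$ small, the coefficient difference is supported on the inclusion: $\lambda_{\Omega_\varepsilon}-\lambda_\Omega=(\lambda^{\text{in}}-\lambda^{\text{out}})\mathds{1}_{\omega_\varepsilon}$. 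Thus the right-hand side collapses to an integral over $\omega_\varepsilon$ only.

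Next I would change variables $x=\phi_\varepsilon(y)=z+\varepsilon y$, testing with $\psi=v\circ\phi_\varepsilon^{-1}$ for $v\in H^1(D_\varepsilon)$. Using that the factor $\varepsilon^{-1}$ in Definition~\ref{def_Keps} cancels the chain-rule factor $\varepsilon$, so that $\nabla_x w_\varepsilon=(\nabla_y K^{(1)}_{\varepsilon,\omega})\circ\phi_\varepsilon^{-1}$, together with $\nabla_x\psi=\varepsilon^{-1}(\nabla_y v)\circ\phi_\varepsilon^{-1}$ and the Jacobian $\dx=\varepsilon^2\d y$, every term rescales by an explicit power of $\varepsilon$; the coefficient $\lambda_{\Omega_\varepsilon}\circ\phi_\varepsilon$ turns into $\lambda_{\omega\cup\phi_\varepsilon^{-1}(\Omega)}$ because $\phi_\varepsilon^{-1}(\Omega_\varepsilon)=\omega\cup\phi_\varepsilon^{-1}(\Omega)$. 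Dividing through by the common factor of $\varepsilon$ then recovers \eqref{firstvari__vari_form}, with the reaction integral appearing as the (positive) lower-order term. Existence and uniqueness in $H^1(D_\varepsilon)$ follow from the Lax--Milgram theorem: for fixed $\varepsilon>0$ the associated bilinear form is bounded and coercive, with coercivity constant $\min(\alpha\lambda^{\text{in}},\varepsilon)>0$, and the right-hand side is a bounded linear functional on $H^1(D_\varepsilon)$.

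Finally, for the uniform bound \eqref{eq_Keps_bounded} I would test \eqref{firstvari__vari_form} with $v=K^{(1)}_{\varepsilon,\omega}$. Since $\lambda_{\omega\cup\phi_\varepsilon^{-1}(\Omega)}\geq\lambda^{\text{in}}>0$ and the reaction term is nonnegative, dropping the latter and applying Cauchy--Schwarz on the right yields
\[
\alpha\lambda^{\text{in}}\|\nabla K^{(1)}_{\varepsilon,\omega}\|_{L^2(D_\varepsilon)}^2 \leq \alpha|\lambda^{\text{in}}-\lambda^{\text{out}}|\,\|\nabla u\circ\phi_\varepsilon\|_{L^2(\omega)}\,\|\nabla K^{(1)}_{\varepsilon,\omega}\|_{L^2(\omega)}.
\]
Bounding $\|\nabla K^{(1)}_{\varepsilon,\omega}\|_{L^2(\omega)}\leq\|\nabla K^{(1)}_{\varepsilon,\omega}\|_{L^2(D_\varepsilon)}$ (as $\omega\subset D_\varepsilon$) and dividing controls $\|\nabla K^{(1)}_{\varepsilon,\omega}\|_{L^2(D_\varepsilon)}$ by $\|\nabla u\circ\phi_\varepsilon\|_{L^2(\omega)}$. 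The latter is uniformly bounded thanks to $u\in C^2(B_\delta(z))$: for $\varepsilon$ small enough $\phi_\varepsilon(\omega)\subset B_\delta(z)$, hence $\|\nabla u\circ\phi_\varepsilon\|_{L^2(\omega)}^2=\int_\omega|\nabla u(z+\varepsilon y)|^2\d y\leq|\omega|\,\|\nabla u\|_{L^\infty(B_\delta(z))}^2$, independently of $\varepsilon$. I expect the main obstacle to be precisely the $\varepsilon$-independence of $C$: the rescaled domain $D_\varepsilon=\phi_\varepsilon^{-1}(D)$ expands to fill $\R^2$ as $\varepsilon\to0$, so any estimate invoking a Poincaré inequality on $D_\varepsilon$ would degrade. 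The argument sidesteps this by using only the coercivity of the principal (gradient) part, whose constant $\alpha\lambda^{\text{in}}$ does not depend on $\varepsilon$, and by exploiting that the source is supported on the fixed set $\omega$, where the $C^2$-regularity of $u$ makes it uniformly controllable.
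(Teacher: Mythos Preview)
Your proof follows exactly the paper's route: subtract the perturbed and unperturbed state equations, substitute the definition of $K^{(1)}_{\varepsilon,\omega}$, rescale to $D_\varepsilon$ via $x=\phi_\varepsilon(y)$, and invoke Lax--Milgram for uniqueness. For the uniform bound \eqref{eq_Keps_bounded} the paper simply cites \cite[Lem.~4.1]{GanStu20} and \cite[Lem.~3.6]{BauStu22}, whereas you supply the underlying energy argument explicitly---testing with $K^{(1)}_{\varepsilon,\omega}$, dropping the nonnegative reaction term, applying Cauchy--Schwarz, and using the $C^2$-regularity of $u$ near $z$ to control the source on the fixed set $\omega$---which is correct and is precisely what those references contain.
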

\begin{proof}
By subtracting the perturbed equation~\eqref{per_state_vari_form} from the unperturbed state equation~\eqref{unper_state_vari_form}, we have
\begin{equation}
\label{state_difference}
\int_D\alpha\lambda_{\Omega_\varepsilon}\nabla(\ue-u)\cdot\nabla v+(\ue-u)v\dx=-\alpha(\lambda^{\text{in}}-\lambda^{\text{out}})\int_{\omega_\varepsilon}\nabla u\cdot\nabla v\dx,
\end{equation}
for $v\in H^1(D)$. Using the definition~\eqref{first_vari_state} and the fact that $\nabla(w\circ\phi_{\varepsilon}^{-1})=\varepsilon^{-1}\nabla w\circ\phi_{\varepsilon}^{-1}$, one gets
\[
\int_{D}\alpha\lambda_{\Omega_\varepsilon}(\nabla K^{(1)}_{\varepsilon,\omega}\circ\phi^{-1}_{\varepsilon})\cdot\nabla v+\varepsilon(K^{(1)}_{\varepsilon,\omega}\circ\phi^{-1}_{\varepsilon})v\dx=-\alpha(\lambda^{\text{in}}-\lambda^{\text{out}})\int_{\omega_\varepsilon}\nabla u\cdot\nabla v\dx,
\]
for $v\in H^1(D)$. Transforming the last equation to the perturbed domain $D_\varepsilon=\phi_{\varepsilon}^{-1}(D)$, as depicted in Fig.~\ref{fig_rescaled_perturbed_domain}, and using the fact that $\text{det}(\partial\phi_{\varepsilon})=\varepsilon^2$ is independent of $x$, it follows that $K_{\varepsilon, \omega}^{(1)}$ satisfies~\eqref{firstvari__vari_form}. The uniqueness follows from the Lax-Milgram lemma by standard arguments.

The proof of the boundedness of $\nabla K_{\varepsilon,\omega}^{(1)}$ can be found in~\cite[Lem. 4.1]{GanStu20} or~\cite[Lem. 3.6]{BauStu22}.
\end{proof}
\ifpics
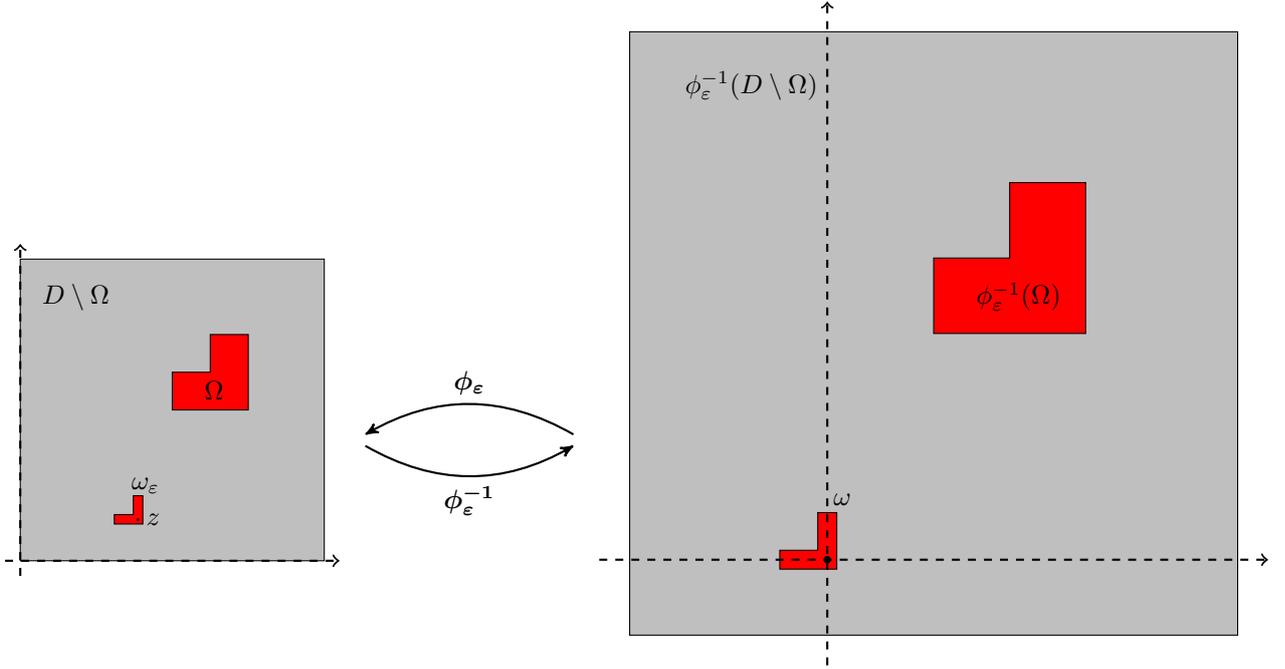
\begin{figure}
	\begin{tikzpicture}[baseline={(0,-1.2)}]
		\draw[fill=gray(x11gray)] (0.2,0.2) -- (0.2,4.2) -- (4.2,4.2) -- (4.2,0.2) -- cycle;
		\draw[fill=red] (2.2,2.2) -- (3.2,2.2) -- (3.2,3.2) -- (2.7,3.2) -- (2.7,2.7) -- (2.2,2.7) -- cycle;
		\draw[fill=red] (1.4375,.6875) -- (1.8125,.6875) -- (1.8125,1.0625) -- (1.6875,1.0625) -- (1.6875,.8125) -- (1.4375,.8125) -- cycle;
		\draw (1.5,4) node[anchor=north east] {$D\setminus\Omega$};
		\draw (3,2.7) node[anchor=north east] {$\Omega$};
		\node at (1.75,.75) {.};
		\node at (1.95,.75) {$z$};
		\node at (1.85,1.2) {$\omega_\varepsilon$};
		\draw[->,dashed,thick] (.2,0) -- (.2,4.4);
		\draw[->,dashed,thick] (0,.2) -- (4.4,.2);
	\end{tikzpicture}
	\begin{tikzpicture}[->,>=stealth',auto,node distance=3cm,thick,baseline={(0,-3)}]
		\node (1) {};
		\node (2) [right of=1] {};
		\path (2) edge[bend right] node [above] {$\bm{\phi_{\varepsilon}}$} (1);
		\path (1) edge[bend right] node [below] {$\bm{\phi_{\varepsilon}^{-1}}$} (2);
	\end{tikzpicture}
	\begin{tikzpicture}
		\draw[fill=gray(x11gray)] (0.4,0.4) -- (0.4,8.4) -- (8.4,8.4) -- (8.4,0.4) -- cycle;
		\draw[fill=red] (4.4,4.4) -- (6.4,4.4) -- (6.4,6.4) -- (5.4,6.4) -- (5.4,5.4) -- (4.4,5.4) -- cycle;
		\draw[fill=red] (2.375,1.275) -- (3.125,1.275) -- (3.125,2.025) -- (2.875,2.025) -- (2.875,1.525) -- (2.375,1.525) -- cycle;
		\draw (3,8) node[anchor=north east] {$\phi_{\varepsilon}^{-1}(D\setminus\Omega)$};
		\draw (6.2,5.2) node[anchor=north east] {$\phi_{\varepsilon}^{-1}(\Omega)$};
		\draw[fill=black] (3,1.4) circle (1.2pt);
		\node at (3.2,2.2) {$\omega$};
		\draw[->,dashed,thick] (3,0) -- (3,8.8);
		\draw[->,dashed,thick] (0,1.4) -- (8.8,1.4);
	\end{tikzpicture}
\caption{Original perturbed domain $D$ with subdomains $\Omega$ and $\omega_\varepsilon$ and rescaled perturbed domain $D_\varepsilon = \phi_\varepsilon^{-1}(D)$ with subdomains $\phi_\varepsilon^{-1}(\Omega)$ and $\omega = \phi_\varepsilon^{-1}(\omega_\varepsilon)$.}
\label{fig_rescaled_perturbed_domain}
\end{figure}
\fi

It can be shown that the first variation of the state, i.e. $K_{\varepsilon,\omega}^{(1)}$, converges to the so-called first corrector function $K^{(1)}_{\omega}\in \dot{BL}(\R^2)$. Here, $BL(\R^2):=\{v\in H^1_{\text{loc}}(\R^2):\nabla v\in L^2(\R^2)^2\}$ is the Beppo-Levi space of locally integrable functions whose derivative is square integrable over the whole unbounded domain and $\dot{BL}(\R^2):=BL(\R^2)/\R$ is the space of equivalence classes. Next, we present the exterior problem defining this limit.
\begin{definition}
\label{definition_first_corrector_function}
Let $\omega\subset\R^2$ open with $\bm{0}\in\omega$.
For a vector $\eta \in \R^2$, we define $K^{(1)}_\omega[\eta]\in \dot{BL}(\R^2)$ as the unique solution to
\begin{equation}
\label{corrector_vari_form}
\int_{\R^2}\alpha\lambda_{\omega}\nabla K^{(1)}_{\omega}[\eta]\cdot\nabla v\dx=-\alpha(\lambda^{\text{in}}-\lambda^{\text{out}})\int_{\omega} \eta \cdot\nabla v\dx,\quad\forall v\in BL(\R^2),
\end{equation}
where
\[
\lambda_{\omega}(x)=
    \left\{\begin{aligned}
      &\lambda^{\text{in}},&x\in\omega,
    \\&\lambda^{\text{out}},&x\in\R^2\setminus\overline{\omega}.
    \end{aligned} \right.
\]

Moreover, for $z\in D\setminus\overline{\Omega}$ and $\nabla u(z)$ the point evaluation of the gradient of the solution $u$ to~\eqref{unper_state_vari_form} at $z$, we call $K^{(1)}_\omega[\nabla u(z)]\in \dot{BL}(\R^2)$ the first corrector function of the perturbed problem~\eqref{per_state_vari_form}.
\end{definition}
\begin{remark}
The fact that~\eqref{definition_first_corrector_function} has a unique solution in $\dot{BL}(\R^2)$ follows from an application of the Lax-Milgram lemma. Note that $\dot{BL}(\R^2)$ is a space of equivalence classes of functions and that elements of $\dot{BL}(\R^2)$ are only unique up to additive constants.
Moreover, it can be seen from Definition~\ref{definition_first_corrector_function} that the mapping $\zeta\mapsto K^{(1)}_\omega[\zeta]$ is linear and we have
\begin{equation}
\label{linear_corrector_function}
K^{(1)}_\omega[\nabla u(z)]=\frac{\partial u}{\partial x_1}(z)K^{(1)}_\omega[\bm{e}^{(1)}]+\frac{\partial u}{\partial x_2}(z)K^{(1)}_\omega[\bm{e}^{(2)}],
\end{equation}
where $(\bm{e}^{(1)},\,\bm{e}^{(2)})$ is the canonical orthonormal basis of $\R^2$.
\end{remark}
\begin{lemma}\label{lem_Keps_to_K}(\cite[Thm. 3.9]{BauStu22},~\cite[Thm. 4.3]{GanStu20})
The first variation of the state $K_{\varepsilon, \omega}^{(1)}$ converges to the first corrector $K_\omega^{(1)}$ in the sense
    \begin{align}
        \nabla K^{(1)}_{\varepsilon,\omega} \rightarrow& \nabla K_\omega^{(1)}[\nabla u(z)] \mbox{ in } L^2(\omega).\label{eq_Keps_to_K} 
    \end{align}
\end{lemma}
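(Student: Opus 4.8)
\section*{Proof proposal}

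The plan is to pass to the limit in the rescaled variational problem~\eqref{firstvari__vari_form} using the uniform energy bound~\eqref{eq_Keps_bounded}, identify the limit through the uniquely solvable exterior problem~\eqref{corrector_vari_form}, and finally upgrade the resulting weak convergence to strong convergence in $L^2(\omega)$ by an energy argument. First I would establish weak compactness. By~\eqref{eq_Keps_bounded} the gradients $\nabla K^{(1)}_{\varepsilon,\omega}$ are uniformly bounded in $L^2$. Since $z\in D\setminus\overline\Omega$ lies at a positive distance from $\partial D$, every fixed ball $B_R$ is contained in $D_\varepsilon=\phi_\varepsilon^{-1}(D)$ for $\varepsilon$ small enough; fixing the representative of each equivalence class (e.g. by prescribing zero mean on $B_R$) and invoking the Poincar\'e--Wirtinger inequality, the family is bounded in $H^1(B_R)$. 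A Rellich compactness and diagonal argument over an exhausting sequence of balls then yields a subsequence with $\nabla K^{(1)}_{\varepsilon,\omega}\rightharpoonup\nabla K^{\ast}$ weakly in $L^2(\R^2)^2$ and $K^{(1)}_{\varepsilon,\omega}\to K^{\ast}$ strongly in $L^2_{\mathrm{loc}}(\R^2)$ for some $K^{\ast}\in\dot{BL}(\R^2)$.

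Next I would pass to the limit. Fix $v\in BL(\R^2)$ with compact support. Because $z\notin\overline\Omega$, the rescaled set $\phi_\varepsilon^{-1}(\Omega)$ escapes to infinity, so for $\varepsilon$ small one has $\operatorname{supp}v\subset D_\varepsilon$ and $\lambda_{\omega\cup\phi_\varepsilon^{-1}(\Omega)}=\lambda_\omega$ on $\operatorname{supp}v$. The diffusion term then converges by weak convergence of the gradients; the zero-order term $\varepsilon\int_{D_\varepsilon}K^{(1)}_{\varepsilon,\omega}v$ tends to zero since $v$ is compactly supported and $K^{(1)}_{\varepsilon,\omega}$ is bounded in $L^2(\operatorname{supp}v)$; and on the right-hand side $\nabla u\circ\phi_\varepsilon(x)=\nabla u(z+\varepsilon x)\to\nabla u(z)$ uniformly on the bounded set $\omega$ by the assumed $C^2$-regularity of $u$ near $z$. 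Hence $K^{\ast}$ solves~\eqref{corrector_vari_form}, and by the uniqueness of its solution in $\dot{BL}(\R^2)$ we conclude $K^{\ast}=K^{(1)}_\omega[\nabla u(z)]$. As the limit does not depend on the chosen subsequence, the whole family converges.

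Finally I would turn the weak convergence into the strong convergence in $L^2(\omega)$ asserted in the statement. The natural device is to compare energies: testing~\eqref{firstvari__vari_form} with $K^{(1)}_{\varepsilon,\omega}$ and~\eqref{corrector_vari_form} with $K^{(1)}_\omega[\nabla u(z)]$, the right-hand sides converge because $\nabla u\circ\phi_\varepsilon\to\nabla u(z)$ strongly in $L^2(\omega)$ while $\nabla K^{(1)}_{\varepsilon,\omega}\rightharpoonup\nabla K^{(1)}_\omega[\nabla u(z)]$ weakly, so the product converges; combined with weak lower semicontinuity and the coercivity of $\lambda_\omega$, this forces $\|\nabla(K^{(1)}_{\varepsilon,\omega}-K^{(1)}_\omega[\nabla u(z)])\|_{L^2(\omega)}\to 0$. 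I expect the main obstacle to lie precisely in controlling the non-gradient contributions to this energy balance on the growing domain $D_\varepsilon$: one must show that the $\varepsilon$-weighted mass term $\varepsilon\|K^{(1)}_{\varepsilon,\omega}\|_{L^2(D_\varepsilon)}^2$ stays bounded and does not spoil the energy identity, which requires quantitative decay estimates for the corrector at infinity and careful truncation to reconcile the bounded domains $D_\varepsilon$ with the unbounded exterior problem on $\R^2$. This is exactly the technical heart of the cited analysis in~\cite{BauStu22,GanStu20}.
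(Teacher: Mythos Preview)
The paper itself does not give a proof of this lemma: immediately before it, the authors write that the asymptotic analysis carries over verbatim from the cited references \cite{BauStu22,GanStu20} and that they ``will not repeat the proofs of Lemma~\ref{lem_Keps_to_K} and Lemma~\ref{lem_K2eps_to_K2} and just state the results here.'' Your outline---uniform energy bound $\Rightarrow$ weak compactness in $\dot{BL}(\R^2)$, identification of the limit via~\eqref{corrector_vari_form} using compactly supported test functions (exploiting that $\phi_\varepsilon^{-1}(\Omega)$ escapes to infinity and that the $\varepsilon$-weighted zero-order term vanishes), uniqueness in $\dot{BL}(\R^2)$, and finally an energy argument to upgrade weak to strong convergence---is exactly the standard route taken in those references, so your proposal is correct and aligned with the approach the paper defers to.

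One small simplification in your last step: you do not actually need decay estimates or a separate bound on $\varepsilon\|K^{(1)}_{\varepsilon,\omega}\|_{L^2(D_\varepsilon)}^2$. Testing~\eqref{firstvari__vari_form} with $K^{(1)}_{\varepsilon,\omega}$ gives
\[
\int_{D_\varepsilon}\alpha\lambda_{\omega\cup\phi_\varepsilon^{-1}(\Omega)}|\nabla K^{(1)}_{\varepsilon,\omega}|^2\dx+\varepsilon\int_{D_\varepsilon}|K^{(1)}_{\varepsilon,\omega}|^2\dx
=-\alpha(\lambda^{\text{in}}-\lambda^{\text{out}})\int_\omega\nabla u\circ\phi_\varepsilon\cdot\nabla K^{(1)}_{\varepsilon,\omega}\dx.
\]
The right-hand side converges (strong $\times$ weak on $\omega$), and the mass term on the left is nonnegative and can simply be dropped in the $\limsup$ direction; combined with weak lower semicontinuity on every fixed ball (where the coefficient equals $\lambda_\omega$ for small $\varepsilon$), this yields convergence of the full weighted energy and hence strong convergence of $\sqrt{\lambda_\omega}\,\nabla K^{(1)}_{\varepsilon,\omega}$ in $L^2(\R^2)$, in particular~\eqref{eq_Keps_to_K}. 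So the ``main obstacle'' you anticipate dissolves by sign.
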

\begin{definition}
The second variation of the state $\ue$ is defined as follows 
\begin{equation}
\label{second_vari_state}
K^{(2)}_{\varepsilon,\omega}:=\frac{K^{(1)}_{\varepsilon,\omega}-K^{(1)}_{\omega}}{\varepsilon} \in H^1(D_\varepsilon),\quad\quad\text{for }\varepsilon>0
\end{equation}
\end{definition}
\begin{remark}
    By extending the perturbed and unperturbed states $u_\varepsilon$ and $u$ to $\R^d$ using a continuous extension operator, we can view $K_{\varepsilon,\omega}^{(1)}$ and $K_{\varepsilon,\omega}^{(2)}$ as elements of $BL(\R^2)$.
\end{remark}

The second variation of the state can be shown to converge to the so-called second corrector function $K^{(2)}_{\omega}[\nabla^2  u(z)]\in \dot{BL}(\R^2)$, which is again defined as the unique solution to an exterior problem.
\begin{definition}
\label{definition_second_corrector_function}
Let $\omega\subset\R^2$ open with $\bm{0}\in\omega$. 
For a spatial point $z\in D\setminus\overline{\Omega}$, let $\nabla^2 u(z)$ denote the point evaluation of the Hessian of the solution $u$ to~\eqref{unper_state_vari_form} at $z$. The second corrector function $K^{(2)}_\omega[\nabla^2 u(z)]\in \dot{BL}(\R^2)$ is the unique solution to
\begin{equation}
\label{second_corrector_vari_form}
\int_{\R^2}\alpha\lambda_{\omega}\nabla K^{(2)}_{\omega}[\nabla^2 u(z)]\cdot\nabla v\dx=-\alpha(\lambda^{\text{in}}-\lambda^{\text{out}})\int_{\omega}\nabla^2 u(z)x\cdot\nabla v\dx,\quad\forall v\in BL(\R^2),
\end{equation}
where
\[
\lambda_{\omega}(x)=
    \begin{cases}
      \lambda^{\text{in}},&x\in\omega, \\
    \lambda^{\text{out}},&x\in\R^2\setminus\overline{\omega}.
    \end{cases}
\]
\end{definition}
\begin{lemma} \label{lem_K2eps_to_K2}(\cite[Thm. 3.16(2)]{BauStu22})
The second variation of the state $K_{\varepsilon, \omega}^{(2)}$ converges to the second corrector $K_\omega^{(2)}$ in the sense
    \begin{align} \label{eq_K2eps_to_K2}
        \nabla K_{\varepsilon, \omega}^{(2)} \rightarrow \nabla K_\omega^{(2)}[\nabla^2 u(z)] \; \mbox{ in } L^2(\omega).
    \end{align}
\end{lemma}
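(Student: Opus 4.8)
The plan is to mirror the proof of the first-order statement in Lemma~\ref{lem_Keps_to_K}: I would first identify the variational problem solved by the rescaled second variation $K^{(2)}_{\varepsilon,\omega}$, show that its right-hand side converges to the forcing of the second corrector equation~\eqref{second_corrector_vari_form}, derive a uniform bound on $\nabla K^{(2)}_{\varepsilon,\omega}$, and then pass to the limit and upgrade the resulting weak convergence to the strong convergence of gradients on the fixed inclusion $\omega$ asserted in~\eqref{eq_K2eps_to_K2}.

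To set up the equation, recall from~\eqref{second_vari_state} that $K^{(1)}_{\varepsilon,\omega}=K^{(1)}_\omega+\varepsilon K^{(2)}_{\varepsilon,\omega}$. Inserting this decomposition into the first-variation equation~\eqref{firstvari__vari_form}, subtracting the first-corrector equation~\eqref{corrector_vari_form} with $\eta=\nabla u(z)$, and dividing by $\varepsilon$, the leading-order sources cancel and one is left, for every test function $v\in H^1(D_\varepsilon)$ with compact support and every sufficiently small $\varepsilon$, with
\[
\int_{D_\varepsilon}\alpha\lambda_\omega\nabla K^{(2)}_{\varepsilon,\omega}\cdot\nabla v\dx=-\alpha(\lambda^{\text{in}}-\lambda^{\text{out}})\int_\omega\frac{\nabla u\circ\phi_\varepsilon-\nabla u(z)}{\varepsilon}\cdot\nabla v\dx+r_\varepsilon(v),
\]
where $r_\varepsilon(v)$ gathers the rescaled zeroth-order term and the mismatch between the coefficients $\lambda_{\omega\cup\phi_\varepsilon^{-1}(\Omega)}$ and $\lambda_\omega$. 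Here I use that, since $z\in D\setminus\overline\Omega$ has positive distance from $\Omega$, the set $\phi_\varepsilon^{-1}(\Omega)$ recedes to infinity as $\varepsilon\to0$, so the two coefficients agree on $\supp v$, and on $\omega$ in particular, for small $\varepsilon$.

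The convergence then rests on three points. \emph{(i)} Because $u\in C^2(B_\delta(z))$, a first-order Taylor expansion of $\nabla u$ about $z$ gives $\varepsilon^{-1}(\nabla u(\phi_\varepsilon(x))-\nabla u(z))=\nabla^2u(z)\,x+o(1)$ uniformly for $x\in\omega$, so the first term on the right converges to $-\alpha(\lambda^{\text{in}}-\lambda^{\text{out}})\int_\omega\nabla^2u(z)\,x\cdot\nabla v\dx$, which is exactly the forcing of~\eqref{second_corrector_vari_form}. \emph{(ii)} The remainder $r_\varepsilon(v)$ tends to zero: the coefficient-mismatch part vanishes identically for small $\varepsilon$ by the distance argument above, while the reaction part is of lower order, since it acquires additional powers of $\varepsilon$ under the rescaling to $D_\varepsilon$ and is controlled using the uniform bound~\eqref{eq_Keps_bounded} on $\nabla K^{(1)}_{\varepsilon,\omega}$ together with a Poincaré estimate on $\supp v$ (after fixing the additive constant of the Beppo–Levi representative). \emph{(iii)} Testing the equation with $v=K^{(2)}_{\varepsilon,\omega}$ and using coercivity of the bilinear form (note $\lambda_\omega\ge\lambda^{\text{in}}>0$) yields $\norm{\nabla K^{(2)}_{\varepsilon,\omega}}_{L^2(D_\varepsilon)}\le C$. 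This bound furnishes a weakly convergent subsequence $\nabla K^{(2)}_{\varepsilon,\omega}\rightharpoonup g$ in $L^2$; passing to the limit in the displayed identity identifies $g=\nabla K^{(2)}_\omega[\nabla^2u(z)]$ as the gradient of the unique $\dot{BL}(\R^2)$ solution of~\eqref{second_corrector_vari_form} (Lax–Milgram), and uniqueness promotes subsequential convergence to convergence of the whole family.

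It remains to upgrade weak convergence to the strong $L^2(\omega)$ convergence claimed in~\eqref{eq_K2eps_to_K2}. I would do this by an energy argument: subtract the limit equation from the equation for $K^{(2)}_{\varepsilon,\omega}$, test the difference with $K^{(2)}_{\varepsilon,\omega}-K^{(2)}_\omega$ (suitably localized), and use coercivity to bound $\norm{\nabla(K^{(2)}_{\varepsilon,\omega}-K^{(2)}_\omega)}_{L^2(\omega)}$ by terms that vanish under the already established weak convergence and source convergence, exactly as in the first-order case. \emph{The main difficulty} throughout is that $K^{(2)}_{\varepsilon,\omega}$ lives on the growing, unbounded domains $D_\varepsilon\uparrow\R^2$ in a Beppo–Levi space where functions are defined only up to constants; consequently the delicate steps are the uniform energy estimate \emph{(iii)} and the final strong convergence, where one must reconcile an energy naturally measured on the expanding $D_\varepsilon$ with a conclusion on the fixed set $\omega$, and where the zeroth-order term—absent from the limiting exterior problem~\eqref{second_corrector_vari_form}—must be shown to be genuinely lower order.
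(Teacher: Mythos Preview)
The paper does not give its own proof of this lemma; it explicitly defers to \cite[Thm.~3.16(2)]{BauStu22} and merely states the result. Your outline is the correct overall strategy and matches the approach of that reference: derive the variational identity for $K^{(2)}_{\varepsilon,\omega}$, identify the limiting source via Taylor expansion, obtain a uniform energy bound, and then upgrade weak to strong convergence on $\omega$ by an energy argument.

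There is, however, a genuine gap at step~\emph{(iii)}. Your displayed equation for $K^{(2)}_{\varepsilon,\omega}$ is derived only for \emph{compactly supported} test functions---this restriction is exactly what lets you discard the coefficient mismatch on $\phi_\varepsilon^{-1}(\Omega)$ and the tail of~\eqref{corrector_vari_form} over $\R^2\setminus D_\varepsilon$. But in~\emph{(iii)} you then test with $v=K^{(2)}_{\varepsilon,\omega}$, which is \emph{not} compactly supported in $D_\varepsilon$, so the identity you wrote down is not available for that choice and the coercivity argument does not close as stated. The remedy used in~\cite{BauStu22}, and hinted at in the paper's remark following~\eqref{second_vari_state}, is to extend $u_\varepsilon$ and $u$ continuously to $\R^2$ so that $K^{(1)}_{\varepsilon,\omega}$ and $K^{(2)}_{\varepsilon,\omega}$ become elements of $BL(\R^2)$; one then works on all of $\R^2$ and controls the additional remainders---supported on $\phi_\varepsilon^{-1}(\Omega)$ and outside $D_\varepsilon$---using decay estimates for $\nabla K^{(1)}_\omega$ at infinity together with the scaling of the extension. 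You rightly flag this as ``the main difficulty'', but your sketch does not indicate how it is actually overcome.
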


\subsection{Topological derivatives of the cost function}
\label{topological_derivatives_cost_function}
Here, we state the first and second-order topological derivatives at a spatial point $z\in D$ with respect to an arbitrary inclusion shape $\omega\subset\R^2$ with $\bm{0}\in\omega$ for the cost function defined in~\eqref{eq_reducedCost}.
\begin{theorem}
\label{THTD1}
Let a spatial point $z\in D\setminus\overline\Omega$ and an inclusion shape $\omega\subset\R^2$ with $\bm{0} \in \omega$ be given.
The first-order topological derivative at $z$ with respect to $\omega$ is given by
 \begin{equation}
 \label{TD1}
d\mathcal{J}(\Omega)(z,\omega)=\frac{\alpha}{2}(\lambda^{\text{in}}-\lambda^{\text{out}})\nabla u(z)^T\left[\mathcal{I}_2+\mathcal{P}^{(1)}_{\omega}\right]\nabla u(z).
\end{equation}
Here, $v^T$ denotes the transpose of a vector $v$, $\mathcal{I}_2$ denotes the second-order identity tensor and $\mathcal{P}^{(1)}_\omega$ denotes the matrix defined as
\begin{equation}
\label{first_order_weak_polarisation_matrix}
\mathcal{P}^{(1)}_\omega=\left[\frac{1}{|\omega|}\int_\omega\nabla K^{(1)}_\omega[\bm{e}^{(1)}]\dx\quad\quad\frac{1}{|\omega|}\int_\omega\nabla K^{(1)}_\omega[\bm{e}^{(2)}]\dx\right]\in\R^{2\times 2},
\end{equation}
where $K^{(1)}_\omega[\bm{e}^{(k)}]$ is defined in Definition~\ref{definition_first_corrector_function}, for $k=1,2$.
\end{theorem}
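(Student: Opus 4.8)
The plan is to separate the increment $\mathcal{J}(\Omega_\varepsilon)-\mathcal{J}(\Omega)$ into an \emph{explicit} part (change of the coefficient $\lambda$ with the state frozen) and an \emph{implicit} part (change of the state), writing
\[
\mathcal{J}(\Omega_\varepsilon)-\mathcal{J}(\Omega)=\big[J(\ue,\Omega_\varepsilon)-J(u,\Omega_\varepsilon)\big]+\big[J(u,\Omega_\varepsilon)-J(u,\Omega)\big].
\]
The explicit part is immediate, since $\lambda_{\Omega_\varepsilon}-\lambda_\Omega$ equals $\lambda^{\text{in}}-\lambda^{\text{out}}$ on $\omega_\varepsilon$ and vanishes elsewhere, giving $\frac{\alpha}{2}(\lambda^{\text{in}}-\lambda^{\text{out}})\int_{\omega_\varepsilon}|\nabla u|^2\dx$. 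The point of this splitting is that the implicit part can be evaluated \emph{exactly}, because $J(\cdot,\Omega_\varepsilon)$ is quadratic in its first argument, so the Taylor expansion around $u$ truncates:
\[
J(\ue,\Omega_\varepsilon)-J(u,\Omega_\varepsilon)=J'(u,\Omega_\varepsilon)(\ue-u)+\tfrac12 J''(u,\Omega_\varepsilon)(\ue-u,\ue-u).
\]

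The two terms I would then simplify separately. For the first, I would subtract the vanishing quantity $J'(u,\Omega)(\ue-u)=0$ from~\eqref{eq_Jprimezero}; only the coefficient jump survives, leaving $J'(u,\Omega_\varepsilon)(\ue-u)=\alpha(\lambda^{\text{in}}-\lambda^{\text{out}})\int_{\omega_\varepsilon}\nabla u\cdot\nabla(\ue-u)\dx$. For the second, testing the state-difference identity~\eqref{state_difference} with $v=\ue-u$ gives exactly $J''(u,\Omega_\varepsilon)(\ue-u,\ue-u)=-\alpha(\lambda^{\text{in}}-\lambda^{\text{out}})\int_{\omega_\varepsilon}\nabla u\cdot\nabla(\ue-u)\dx$, so the lower-order $L^2$-terms cancel cleanly. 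Adding everything, the full increment reduces to
\[
\mathcal{J}(\Omega_\varepsilon)-\mathcal{J}(\Omega)=\frac{\alpha}{2}(\lambda^{\text{in}}-\lambda^{\text{out}})\int_{\omega_\varepsilon}|\nabla u|^2+\nabla u\cdot\nabla(\ue-u)\dx.
\]

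Next I would divide by $|\omega_\varepsilon|=\varepsilon^2|\omega|$ and rescale to $\omega$ via $\phi_\varepsilon$. Using $(\nabla(\ue-u))\circ\phi_\varepsilon=\nabla K^{(1)}_{\varepsilon,\omega}$, which is a direct consequence of~\eqref{first_vari_state}, the quotient becomes $\frac{1}{|\omega|}\int_\omega|\nabla u\circ\phi_\varepsilon|^2+(\nabla u\circ\phi_\varepsilon)\cdot\nabla K^{(1)}_{\varepsilon,\omega}\dx$. Since $u\in C^2(B_\delta(z))$ forces $\nabla u\circ\phi_\varepsilon\to\nabla u(z)$ uniformly on the bounded set $\omega$, the first summand converges to $|\nabla u(z)|^2=\nabla u(z)^T\mathcal{I}_2\nabla u(z)$, which is the identity-tensor contribution in~\eqref{TD1}.

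The limit of the mixed term is the only genuinely delicate point, and I expect it to be the main obstacle. I would treat it by the splitting $(\nabla u\circ\phi_\varepsilon)\cdot\nabla K^{(1)}_{\varepsilon,\omega}=\nabla u(z)\cdot\nabla K^{(1)}_{\varepsilon,\omega}+(\nabla u\circ\phi_\varepsilon-\nabla u(z))\cdot\nabla K^{(1)}_{\varepsilon,\omega}$: the remainder is bounded by $\|\nabla u\circ\phi_\varepsilon-\nabla u(z)\|_{L^\infty(\omega)}$ times the uniform gradient bound~\eqref{eq_Keps_bounded} and hence vanishes, while the leading term passes to the limit by the $L^2(\omega)$-convergence $\nabla K^{(1)}_{\varepsilon,\omega}\to\nabla K^{(1)}_\omega[\nabla u(z)]$ of Lemma~\ref{lem_Keps_to_K}. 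This yields $\frac{1}{|\omega|}\int_\omega\nabla u(z)\cdot\nabla K^{(1)}_\omega[\nabla u(z)]\dx$. To finish, I would insert the linearity relation~\eqref{linear_corrector_function}, extract the factors $\partial u/\partial x_k(z)$ from the integral, and identify the averaged gradients $\frac{1}{|\omega|}\int_\omega\nabla K^{(1)}_\omega[\bm{e}^{(k)}]\dx$ as the columns of $\mathcal{P}^{(1)}_\omega$ in~\eqref{first_order_weak_polarisation_matrix}, so that the mixed term equals $\nabla u(z)^T\mathcal{P}^{(1)}_\omega\nabla u(z)$. Combining the two contributions gives precisely~\eqref{TD1}.
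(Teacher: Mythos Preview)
Your proof is correct and arrives at the same key identity~\eqref{TD1_Delfour} as the paper, after which the rescaling and limit argument via Lemma~\ref{lem_Keps_to_K} and the bound~\eqref{eq_Keps_bounded} proceed identically. The only notable difference is the order of the initial splitting: the paper writes $[J(\ue,\Omega_\varepsilon)-J(\ue,\Omega)]+[J(\ue,\Omega)-J(u,\Omega)]$ and Taylor-expands the second bracket in the \emph{unperturbed} domain (so $J'(u,\Omega)=0$ applies directly, but the explicit term contains $|\nabla\ue|^2$ and must be rearranged), whereas you split as $[J(\ue,\Omega_\varepsilon)-J(u,\Omega_\varepsilon)]+[J(u,\Omega_\varepsilon)-J(u,\Omega)]$ and Taylor-expand in the \emph{perturbed} domain (so the explicit term already carries $|\nabla u|^2$, at the cost of having to subtract $J'(u,\Omega)(\ue-u)=0$ to reduce $J'(u,\Omega_\varepsilon)$). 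Both routes invoke the same ingredients---the quadratic structure of $J$, the vanishing of $J'(u,\Omega)$, and the state-difference equation tested with $v=\ue-u$---and your ordering is arguably a touch more direct since it avoids the extra algebraic rearrangement of $|\nabla\ue|^2$.
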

\begin{remark}
The matrix $\mathcal{P}^{(1)}_\omega$ is also refered to as weak polarization matrix~\cite{Stu20}. The more commonly known concept, refered to as polarization matrix $P_\omega$~\cite{AmmKan07}, is related to $\mathcal P_\omega^{(1)}$ by $P_\omega = (\lambda^{\text{in}}-\lambda^{\text{out}})/\lambda^{\text{out}} (\mathcal I_2 + \mathcal P_\omega^{(1)})$ such that the topological derivative reads $d \mathcal J(\Omega)(z, \omega) = \frac{\alpha}{2} \lambda^{\text{out}} \nabla u(z)^T P_\omega^{(1)} \nabla u(z)$.
\end{remark}
\begin{proof}
For the first-order topological derivative, we use the definition~\eqref{definition_TD1} and Taylor expansion, as follows
\begin{equation*}
\begin{split}
\mathcal{J}(\Omega_\varepsilon)-\mathcal{J}(\Omega)=J(\ue,\Omega_\varepsilon)-J(u,\Omega)
  &=J(\ue,\Omega_\varepsilon)-J(\ue,\Omega)+J(\ue,\Omega)-J(u,\Omega)
\\&=J(\ue,\Omega_\varepsilon)-J(\ue,\Omega)+J'(u,\Omega)(\ue-u)+\frac12J''(u)(\ue - u)^2,
\end{split}
\end{equation*}
where we used the fact that $J$ is quadratic in $u$. Due to~\eqref{eq_Jprimezero}, we have that $J'(u, \Omega)(w)=0$ for all $w \in H^1(D)$. Thus, plugging in~\eqref{unper_cost_func}, we get
\[
\mathcal{J}(\Omega_\varepsilon)-\mathcal{J}(\Omega)=\frac{\alpha}{2}(\lambda^{\text{in}}-\lambda^{\text{out}})\int_{\omega_\varepsilon}|\nabla \ue|^2\dx+\frac12\int_D (\ue-u)^2+\alpha\lambda_\Omega|\nabla\ue-\nabla u|^2\dx.
\]
Some rearrangements to the last equation lead to
\begin{equation*}
\begin{split}
\mathcal{J}(\Omega_\varepsilon)-\mathcal{J}(\Omega)
  =&\alpha(\lambda^{\text{in}}-\lambda^{\text{out}})\int_{\omega_\varepsilon}\nabla(\ue-u)\cdot\nabla u\dx+\frac{\alpha}{2}(\lambda^{\text{in}}-\lambda^{\text{out}})\int_{\omega_\varepsilon}|\nabla u|^2\dx
  \\&+\frac12\int_D (\ue-u)^2+\alpha\lambda_{\Omega_\varepsilon}|\nabla\ue-\nabla u|^2\dx.
\end{split}
\end{equation*}
By considering $v=\ue-u$ in Equation~\eqref{state_difference}, one gets 
\begin{equation}
\label{TD1_Delfour}
\mathcal{J}(\Omega_\varepsilon)-\mathcal{J}(\Omega)=\frac{\alpha}{2}(\lambda^{\text{in}}-\lambda^{\text{out}})\int_{\omega_\varepsilon}\nabla(\ue-u)\cdot\nabla u\dx+\frac{\alpha}{2}(\lambda^{\text{in}}-\lambda^{\text{out}})\int_{\omega_\varepsilon}|\nabla u|^2\dx.
\end{equation}
First, we compute the limit of the first term in~\eqref{TD1_Delfour} when divided by the volume $|\omega_\varepsilon|$,
\[
\mathcal{R}_1^{(1)}(u):=\lim_{\varepsilon\rightarrow 0}\frac{1}{|\omega_\varepsilon|}\frac{\alpha}{2}(\lambda^{\text{in}}-\lambda^{\text{out}})\int_{\omega_\varepsilon}\nabla(\ue-u)\cdot\nabla u\dx.
\]
Using the definition of the quantity $K^{(1)}_{\varepsilon,\omega}$~\eqref{first_vari_state} and making a change of variable, we have
\begin{equation}
\label{eq_reste121}
\begin{split}
\mathcal{R}_1^{(1)}(u)
  &=\lim_{\varepsilon\rightarrow 0}\frac{1}{|\omega_\varepsilon|}\frac{\alpha}{2}(\lambda^{\text{in}}-\lambda^{\text{out}})\int_{\omega_\varepsilon}(\nabla K^{(1)}_{\varepsilon,\omega})\circ\phi_{\varepsilon}^{-1}\cdot\nabla u\dx
\\&=\lim_{\varepsilon\rightarrow 0}\frac{1}{|\omega|}\frac{\alpha}{2}(\lambda^{\text{in}}-\lambda^{\text{out}})\int_{\omega}\nabla K^{(1)}_{\varepsilon,\omega}\cdot\nabla u\circ\phi_{\varepsilon}\dx,
\end{split}
\end{equation}
where we again used $\nabla(w\circ\phi_{\varepsilon}^{-1})=\varepsilon^{-1}\nabla w\circ\phi_{\varepsilon}^{-1}$ as well as $\text{det}(\partial\phi_{\varepsilon})=\varepsilon^2$ and $|\omega_\varepsilon|=\varepsilon^2|\omega|$.
For passing to the limit $\varepsilon\rightarrow 0$, using~\eqref{eq_Keps_to_K}, considering the smoothness of $u$ in a neighbourhood of $z$ and exploiting the boundedness of $\nabla K_{\varepsilon,\omega}^{(1)}$~\eqref{eq_Keps_bounded}, Lebesgue's dominated convergence theorem yields
\begin{equation}
\label{eq_reste12}
\mathcal{R}_1^{(1)}(u)=\frac{1}{|\omega|}\frac{\alpha}{2}(\lambda^{\text{in}}-\lambda^{\text{out}})\int_{\omega}\nabla K^{(1)}_{\omega}[\nabla u(z)]\cdot\nabla u(z)\dx.
\end{equation}
Similarly, we get for the last term in~\eqref{TD1_Delfour}, again after division by $|\omega_\varepsilon|$,
\begin{equation}
\label{eq_reste13}
\begin{split}
\mathcal{R}_2^{(1)}(u)
  :=&\lim_{\varepsilon\rightarrow 0}\frac{1}{|\omega_\varepsilon|}\frac{\alpha}{2}(\lambda^{\text{in}}-\lambda^{\text{out}})\int_{\omega_\varepsilon}|\nabla u|^2\dx
\\=&\frac{\alpha}{2}(\lambda^{\text{in}}-\lambda^{\text{out}})|\nabla u|^2(z).
\end{split}
\end{equation}
Combining these limits,~\eqref{eq_reste12} and~\eqref{eq_reste13}, we find the first-order topological derivative of the cost functional $\mathcal{J}$ as follows
\begin{equation} 
\label{eq_dJinproof}
d\mathcal{J}(\Omega)(z,\omega)=\frac{\alpha}{2}(\lambda^{\text{in}}-\lambda^{\text{out}})\left\{|\nabla u|^2(z)+\fint_{\omega}\nabla K^{(1)}_{\omega}[\nabla u(z)]\cdot\nabla u(z)\dx\right\},
\end{equation}
where $\fint_{\omega}v\dx=\frac{1}{|\omega|}\int_{\omega}v\dx$. 

Finally, considering the linearity of $\zeta \mapsto K_\omega^{(1)}[\zeta]$ according to~\eqref{linear_corrector_function} and the definition of the weak polarisation matrix~\eqref{first_order_weak_polarisation_matrix}, we write the first-order topological derivative at the point $z\in D\setminus\overline{\Omega}$ with respect to an inclusion shape $\omega$ as
 \[
d\mathcal{J}(\Omega)(z,\omega)=\frac{\alpha}{2}(\lambda^{\text{in}}-\lambda^{\text{out}})\nabla u^T(z)\left[\mathcal{I}_2+\mathcal{P}^{(1)}_{\omega}\right]\nabla u(z).
\]
which concludes the proof.
\end{proof}
\begin{theorem}
\label{THTD2}
Let a spatial point $z \in D \setminus \overline \Omega$ and an inclusion shape $\omega \subset \R^2$ with $\bm{0} \in \omega$ be given.
The second-order topological derivative at $z$ with respect to $\omega$ is given by
\begin{equation}
\label{TD2}
d^2\mathcal{J}(\Omega)(z,\omega)=\alpha(\lambda^{\text{in}}-\lambda^{\text{out}}) \text{vec}(\nabla^2 u(z))^\top \left[\mathcal{X}+\mathcal{P}^{(2)}_{\omega}\right] \nabla u(z),
\end{equation}
where $\mathcal{X}=\mathcal{I}_2\otimes\fint_{\omega} x\dx \in \R^{4 \times 2}$ and $\mathcal{P}^{(2)}_\omega$ denotes the matrix defined as follows
\begin{equation}
\label{second_order_weak_polarisation_matrix}
\mathcal{P}^{(2)}_\omega=\left[\frac{1}{|\omega|}\int_\omega\nabla K^{(1)}_\omega[\bm{e}^{(1)}]\otimes x\dx\quad\quad\frac{1}{|\omega|}\int_\omega\nabla K^{(1)}_\omega[\bm{e}^{(2)}]\otimes x\dx\right]\in\R^{4\times 2},
\end{equation}
where $K^{(1)}_\omega[\bm{e}^{(k)}]$ is defined in Definition~\ref{definition_first_corrector_function}, for $k=1,2$.
\end{theorem}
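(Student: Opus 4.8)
The plan is to reuse the exact identity~\eqref{TD1_Delfour} already established in the proof of Theorem~\ref{THTD1}, which splits the cost difference into
\[
\mathcal{J}(\Omega_\varepsilon)-\mathcal{J}(\Omega)=\underbrace{\frac{\alpha}{2}(\lambda^{\text{in}}-\lambda^{\text{out}})\int_{\omega_\varepsilon}\nabla(\ue-u)\cdot\nabla u\dx}_{=:A_\varepsilon}+\underbrace{\frac{\alpha}{2}(\lambda^{\text{in}}-\lambda^{\text{out}})\int_{\omega_\varepsilon}|\nabla u|^2\dx}_{=:B_\varepsilon},
\]
insert this into the definition~\eqref{definition_TD2}, subtract $|\omega_\varepsilon|\,d\mathcal{J}(\Omega)(z,\omega)=|\omega_\varepsilon|(\mathcal{R}_1^{(1)}(u)+\mathcal{R}_2^{(1)}(u))$, and divide by $\varepsilon|\omega_\varepsilon|$. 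Since $|\omega_\varepsilon|=\varepsilon^2|\omega|$, this reduces the whole computation to evaluating $\tfrac1\varepsilon(A_\varepsilon/|\omega_\varepsilon|-\mathcal{R}_1^{(1)})$ and $\tfrac1\varepsilon(B_\varepsilon/|\omega_\varepsilon|-\mathcal{R}_2^{(1)})$ in the limit $\varepsilon\to0$, which I would treat separately.

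The term $B_\varepsilon$ produces the tensor $\mathcal{X}$. After the change of variables $x=z+\varepsilon y$ I would invoke the assumed $C^2$-regularity of $u$ near $z$ to Taylor-expand $|\nabla u(z+\varepsilon y)|^2=|\nabla u(z)|^2+2\varepsilon\,\nabla u(z)^\top\nabla^2 u(z)\,y+O(\varepsilon^2)$. Subtracting $\mathcal{R}_2^{(1)}(u)=\tfrac{\alpha}{2}(\lambda^{\text{in}}-\lambda^{\text{out}})|\nabla u(z)|^2$ cancels the zeroth-order term, and dividing by $\varepsilon$ and letting $\varepsilon\to0$ leaves $\alpha(\lambda^{\text{in}}-\lambda^{\text{out}})\,\nabla u(z)^\top\nabla^2 u(z)\fint_\omega x\dx$, which I would identify with $\alpha(\lambda^{\text{in}}-\lambda^{\text{out}})\,\text{vec}(\nabla^2u(z))^\top\mathcal{X}\,\nabla u(z)$ using the symmetry of the Hessian and $\mathcal{X}=\mathcal{I}_2\otimes\fint_\omega x\dx$.

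For $A_\varepsilon$, starting from the rescaled form~\eqref{eq_reste121} I would add and subtract $\nabla K^{(1)}_{\varepsilon,\omega}\cdot\nabla u(z)$ to decompose the integrand difference as
\[
\nabla K^{(1)}_{\varepsilon,\omega}\cdot\bigl(\nabla u\circ\phi_\varepsilon-\nabla u(z)\bigr)+\bigl(\nabla K^{(1)}_{\varepsilon,\omega}-\nabla K^{(1)}_\omega[\nabla u(z)]\bigr)\cdot\nabla u(z).
\]
The first bracket equals $\varepsilon\,\nabla K^{(1)}_{\varepsilon,\omega}\cdot(\nabla^2 u(z)y)+O(\varepsilon^2)$ by Taylor expansion, while the second equals $\varepsilon\,\nabla K^{(2)}_{\varepsilon,\omega}\cdot\nabla u(z)$ directly from the definition~\eqref{second_vari_state} of the second variation. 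Dividing by $\varepsilon$, using the boundedness~\eqref{eq_Keps_bounded} of $\nabla K^{(1)}_{\varepsilon,\omega}$ to absorb the $O(\varepsilon)$ remainder, and passing to the limit via the convergences~\eqref{eq_Keps_to_K} and~\eqref{eq_K2eps_to_K2} together with Lebesgue dominated convergence, I would arrive at
\[
\frac{\alpha}{2}(\lambda^{\text{in}}-\lambda^{\text{out}})\left[\fint_\omega\nabla K^{(1)}_\omega[\nabla u(z)]\cdot(\nabla^2u(z)y)\dy+\fint_\omega\nabla K^{(2)}_\omega[\nabla^2u(z)]\cdot\nabla u(z)\dy\right].
\]

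The main obstacle, and the only genuinely new ingredient beyond the first-order proof, is to eliminate the second corrector $K^{(2)}_\omega$ so that the answer can be expressed through $\mathcal{P}^{(2)}_\omega$, which involves only $K^{(1)}_\omega$. I would establish the required reciprocity by testing the second-corrector equation~\eqref{second_corrector_vari_form} with $v=K^{(1)}_\omega[\nabla u(z)]$ and the first-corrector equation~\eqref{corrector_vari_form} (with $\eta=\nabla u(z)$) with $v=K^{(2)}_\omega[\nabla^2u(z)]$; the two left-hand sides coincide by symmetry of the bilinear form, so
\[
\int_\omega(\nabla^2u(z)\,x)\cdot\nabla K^{(1)}_\omega[\nabla u(z)]\dx=\int_\omega\nabla u(z)\cdot\nabla K^{(2)}_\omega[\nabla^2u(z)]\dx.
\]
Hence the two bracketed integrals above are equal, their sum doubles one of them, and the factor $\tfrac12$ becomes $1$. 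Finally I would use the linearity~\eqref{linear_corrector_function} of $\zeta\mapsto K^{(1)}_\omega[\zeta]$, the definition~\eqref{second_order_weak_polarisation_matrix} of $\mathcal{P}^{(2)}_\omega$, and the identity $\text{vec}(\nabla^2u(z))^\top(w\otimes x)=w\cdot(\nabla^2u(z)\,x)$ (valid by symmetry of the Hessian) to rewrite the surviving term as $\alpha(\lambda^{\text{in}}-\lambda^{\text{out}})\,\text{vec}(\nabla^2u(z))^\top\mathcal{P}^{(2)}_\omega\,\nabla u(z)$; adding the $\mathcal{X}$-contribution from $B_\varepsilon$ yields exactly~\eqref{TD2}.
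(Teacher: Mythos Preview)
Your proposal is correct and follows essentially the same route as the paper: you reuse the identity~\eqref{TD1_Delfour}, split off the $|\nabla u|^2$ part to produce $\mathcal{X}$, decompose the $K^{(1)}_{\varepsilon,\omega}$-term by adding and subtracting (the paper uses the mirror decomposition $(\nabla K^{(1)}_{\varepsilon,\omega}-\nabla K^{(1)}_\omega)\cdot\nabla u\circ\phi_\varepsilon+\nabla K^{(1)}_\omega\cdot(\nabla u\circ\phi_\varepsilon-\nabla u(z))$, which leads to the same limit), pass to the limit via~\eqref{eq_Keps_to_K} and~\eqref{eq_K2eps_to_K2}, and then eliminate $K^{(2)}_\omega$ by the same reciprocity argument testing~\eqref{corrector_vari_form} and~\eqref{second_corrector_vari_form} against each other. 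The only cosmetic difference is the order of the add-and-subtract, which is immaterial.
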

\begin{proof}
For the second-order term of the asymptotic expansion of $\mathcal{J}$, we use the definition of the topological derivative, Equation~\eqref{definition_TD2}, as follows
\begin{equation}
\label{TD2_delfour}
\begin{split}
\mathcal{J}(\Omega_\varepsilon)-\mathcal{J}(\Omega)-|\omega_\varepsilon|d\mathcal{J}(\Omega)(z,\omega)
&=\frac{|\omega_\varepsilon|}{|\omega|} \frac{\alpha}{2}(\lambda^{\text{in}}-\lambda^{\text{out}})\left\{\int_{\omega}\nabla K^{(1)}_{\varepsilon,\omega}\cdot\nabla u\circ\phi_{\varepsilon}+|\nabla u\circ\phi_{\varepsilon}|^2\dx\right\}
\\&-\frac{|\omega_\varepsilon|}{|\omega|}\frac{\alpha}{2}(\lambda^{\text{in}}-\lambda^{\text{out}})\left\{\int_{\omega}\nabla K^{(1)}_{\omega}\cdot\nabla u(z)+|\nabla u|^2(z) \dx\right\},
\end{split}
\end{equation}
where~\eqref{eq_reste121},~\eqref{eq_reste13} and~\eqref{eq_dJinproof} are also exploited. Then, we start our analysis with the first term
\[
\mathcal{R}_1^{(2)}(u):=\lim_{\varepsilon\rightarrow 0}\frac{1}{\varepsilon|\omega|}\frac{\alpha}{2}(\lambda^{\text{in}}-\lambda^{\text{out}})\int_{\omega}\nabla K^{(1)}_{\varepsilon,\omega}\cdot\nabla u\circ\phi_{\varepsilon}-\nabla K^{(1)}_{\omega}\cdot\nabla u(z)\dx.
\]
Rearranging the last equation gives
\[
\mathcal{R}_1^{(2)}(u)=\lim_{\varepsilon\rightarrow 0}\frac{\alpha}{2}(\lambda^{\text{in}}-\lambda^{\text{out}})\fint_{\omega}\varepsilon^{-1}\nabla\left(K^{(1)}_{\varepsilon,\omega}-K^{(1)}_{\omega}\right)\cdot\nabla u\circ\phi_{\varepsilon}+\nabla K^{(1)}_{\omega}\cdot\varepsilon^{-1}\left(\nabla u\circ\phi_{\varepsilon}-\nabla u(z)\right)\dx.
\]
Passing to the limit $\varepsilon\rightarrow 0$ and exploiting~\eqref{eq_Keps_to_K} and~\eqref{eq_K2eps_to_K2} as well as  the smoothness of $u$ near $z$ yields
\begin{equation}
\label{eq_reste22}
\mathcal{R}_1^{(2)}(u)=\frac{\alpha}{2}(\lambda^{\text{in}}-\lambda^{\text{out}})\fint_{\omega}\nabla K^{(2)}_{\omega}[\nabla u(z)]\cdot\nabla u(z)+\nabla K^{(1)}_{\omega}[\nabla u(z)]\cdot\nabla^2u(z)x\dx.
\end{equation}
Similarly, by Taylor expansion and assuming smoothness of $u$ near $z$, we get for the remaining terms in~\eqref{TD2_delfour}
\begin{equation}
\label{eq_reste23}
\begin{split}
\mathcal{R}_2^{(2)}(u)
  :=&\lim_{\varepsilon\rightarrow 0}\frac{1}{\varepsilon|\omega|}\frac{\alpha}{2}(\lambda^{\text{in}}-\lambda^{\text{out}})\int_{\omega}|\nabla u\circ\phi_{\varepsilon}|^2-|\nabla u|^2(z)\dx
\\=&\alpha(\lambda^{\text{in}}-\lambda^{\text{out}})\fint_{\omega}\nabla^2u(z)x\cdot\nabla u(z)\dx.
\end{split}
\end{equation}
Combining these limits,~\eqref{eq_reste22} and~\eqref{eq_reste23}, we find the second-order topological derivative of the cost functional $\mathcal{J}$,
\begin{equation*}
\begin{split}
d^2\mathcal{J}(\Omega)(z,\omega)
  &=\frac{\alpha}{2}(\lambda^{\text{in}}-\lambda^{\text{out}})\fint_{\omega}\nabla K^{(2)}_{\omega}[\nabla u(z)]\cdot\nabla u(z)+\nabla K^{(1)}_{\omega}[\nabla u(z)]\cdot\nabla^2u(z)x\dx
  \\&+\alpha(\lambda^{\text{in}}-\lambda^{\text{out}})\fint_{\omega}\nabla^2u(z)x\cdot\nabla u(z)\dx.
\end{split}
\end{equation*}
Considering first $v=K^{(2)}_{\omega}[\nabla u(z)]$ in Equation~\eqref{corrector_vari_form} and then $v=K^{(1)}_{\omega}[\nabla u(z)]$ in Equation~\eqref{second_corrector_vari_form}, we get  
\[
d^2\mathcal{J}(\Omega)(z,\omega)=\alpha(\lambda^{\text{in}}-\lambda^{\text{out}})\fint_{\omega}\nabla K^{(1)}_{\omega}[\nabla u(z)]\cdot\nabla^2u(z)x+\nabla^2u(z)x\cdot\nabla u(z)\dx.
\]
Finally, we write the second-order topological derivative at the point $z \in D\setminus\overline{\Omega}$ with respect to an inclusion shape $\omega$, as follows
\[
d^2\mathcal{J}(\Omega)(z,\omega)=\alpha(\lambda^{\text{in}}-\lambda^{\text{out}}) \text{vec}(\nabla^2 u(z))^\top \left[\mathcal{X}+\mathcal{P}^{(2)}_{\omega}\right] \nabla u(z),
\]
with $\mathcal{X}=\mathcal{I}_2\otimes\fint_{\omega} x\dx \in \R^{4\times 2}$ and the matrix $\mathcal P_\omega^{(2)}$ defined in~\eqref{second_order_weak_polarisation_matrix}, where we again used the linearity of the corrector function~\eqref{linear_corrector_function}.
\end{proof}
\begin{remark}
For a specific shape of perturbation (e.g. circular or elliptic-shaped inclusion), we derive an exact formula of the topological derivative $d^2\mathcal{J}(\Omega)(z,\omega)$. In this case, we have to compute a numerical approximation of the quantity $K^{(1)}_{\omega}[\nabla u(z)]\in\dot{BL}(\R^2)$, for more details see Section~\ref{choice_pixel_configurations}.
\end{remark}

\section{A numerical vertex detection method}
\label{numerical_vertex_detection_method}
In this section, we first present numerical techniques based on the second-order topological derivative obtained in Theorem~\ref{THTD2} before assessing the developed approach's effectiveness in a set of numerical experiments. 

\subsection{Numerical algorithm}
\label{numerical_algorithm}
The proposed algorithm is based on the numerical computation of second-order topological derivatives of the reduced cost function~\eqref{eq_reducedCost} with respect to particular inclusion shapes in order to detect junctions of lines of different configurations, cf. Fig.~\ref{fig_classification_vertices}. Recall that the analysis of Section~\ref{asymptotic_expansion_J} is valid for arbitrary inclusion shapes $\omega$ with $0 \in \omega$. On the one hand, we present a method for detecting the location of a particular given inclusion shape. On the other hand, we present how this procedure can be used to detect vertex classes and locations within an image simultaneously.

\subsubsection{Choice of inclusion shapes}
\label{choice_pixel_configurations}
One needs to choose thoughtfully the inclusion shape $\omega$ covering all the classes of vertices, see Fig.~\ref{fig_classification_vertices}. To represent all different classes of vertices, we consider four angles (i.e. four lines) since four is the maximum number of lines to represent 'K-junction', 'X-junction', 'Peak', 'Multi-junction', see Fig.~\ref{fig_classification_vertices}(e)--(h). Each angle can attain values in the interval $[0^\circ,360^\circ]$, see Fig.~\ref{fig_exterior_pb} for some examples of the numerically computed first order corrector $K_\omega^{(1)}$ for different inclusion shapes $\omega$ resembling the vertex classes of Fig.~\ref{fig_classification_vertices}.

In order to illustrate these numerical computations, we consider an example of the configuration 'L-corner' (see Fig.~\ref{fig_classification_vertices}(a)) where two lines (represented by two points $P, Q \in \R^2$ with $|P|=|Q|=1$) meet at the origin $O=(0,0)^T$ with angles deg(P)=$0^\circ$ and deg(Q)=$45^\circ$, as depicted in Fig.~\ref{fig_enlargement}(a). Then, the volume $\omega$ is an enlargement of the union of the two line segments $\vec{OP}$ and $\vec{OQ}$ as it is depicted in Fig.~\ref{fig_enlargement}. Here, the line segments $\vec{P_1 P_2}$ and $\vec{Q_1 Q_2}$ are orthogonal to the line segments $\vec{OP}$ and $\vec{OQ}$ and pass through $P$ and $Q$ half way, respectively. Their length is given by the variable $w_{PQ}$ which is set to $w_{PQ}=0.05$ throughout this study. The point $PQ_1$ in Fig.~\ref{fig_enlargement} is the intersection point of that line parallel to $\vec{OP}$ passing through $P_1$ and that line parallel to $\vec{OQ}$ passing through $Q_2$. The point $PQ_2$ is defined analogously. In the same way, the other vertex classes defined by three or four lines in Fig.~\ref{fig_classification_vertices}(b)--(h), e.g. 'Fork' or 'Peak' junctions are treated. The enlargement of the configuration is considered as the inclusion shape $\omega$. We will reference particular shapes $\omega$ by the angles between the positive $x$-axis and the points defining them, i.e., by deg(P) and deg(Q) in the case of two lines, and additionally by deg(R) in the case of three and deg(S) in the case of four lines, see Fig.~\ref{fig_classification_vertices}(b)-(d) and (e)-(h), respectively. We will refer to the inclusion shapes obtained in this way by $\omega[\text{deg}(P), \text{deg}(Q)]$, $\omega[\text{deg}(P), \text{deg}(Q), \text{deg}(R)]$ and $\omega[\text{deg}(P), \text{deg}(Q), \text{deg}(R), \text{deg}(S)]$ in the case of two, three or four lines, respectively.
\ifpics
\begin{figure}
\centering
	\begin{tikzpicture}[scale=1.5,baseline={(0,0)}]
		\node at (1.25,-1) {(a)};
		\draw[thick] (0,0) -- (3,0);
		\draw[thick] (0,0) -- (2.12,2.12);
  
		\draw[ao(english)] (3,.225) -- (0,.225);
		\draw[ao(english)] (0,-.225) -- (3,-.225);
		\draw[ao(english)] (3,-.225) -- (3,.225);
		\draw[dashed,ao(english)] (-.56,-.225) -- (0,-.225);
        \draw[dashed,ao(english)] (-.56,.225) -- (0,.225);
  
		\draw[slategray] (1.96,2.28) -- (-.324,0);
		\draw[slategray] (0,-.318) -- (2.28,1.96);
		\draw[slategray] (2.28,1.96) -- (1.96,2.28);
		\draw[dashed,slategray] (-.324,0) -- (-.56,-.225);
        \draw[dashed,slategray] (-.324,0) -- (-.56,-.225);
        
		\draw (3.2,0) node {P};
		\draw (3.125,.27)  node[ao(english)] {\scriptsize P$_1$};
		\draw  (3.127,-.27)  node[ao(english)] {\scriptsize P$_2$};
  
		\draw [decorate,decoration={brace,amplitude=5pt,mirror,raise=4ex}] (-.225,.225) -- (-.225,-.225) node[midway,xshift=-4.25em]{$w_{PQ}$};

		\draw (2.32,2.32)  node{Q};
		\draw (2,2.375)  node[slategray]{\scriptsize Q$_1$};
		\draw (2.4,2)  node[slategray]{\scriptsize Q$_2$};
  
		\draw[fill=black] (.543,.225) circle (0.5pt);
		\draw (.75,.275) node {\scriptsize PQ$_1$};
		\draw[fill=black] (-.56,-.225) circle (0.5pt);
		\draw (-.56,-.35) node {\scriptsize PQ$_2$};
	\end{tikzpicture}
	\hspace*{.75cm}
	\begin{tikzpicture}[scale=1.5,baseline={(0,0)}]
		\node at (1.25,-1) {(b)};
		\draw[thick] (0,0) -- (3,0);
		\draw[thick] (0,0) -- (2.12,2.12);
		\draw[red] (3,.225) -- (.543,.225);
		\draw[red] (.543,.225) -- (2.28,1.96);
		\draw[red] (2.28,1.96) -- (1.96,2.28);
		\draw[red] (1.96,2.28) -- (-.56,-.225) ;
		\draw[red] (-.56,-.225) -- (3,-.225);
		\draw[red] (3,-.225) -- (3,.225);
		\draw (3.125,.27)  node {P$_1$};
		\draw (.8,.3) node {PQ$_1$};
		\draw (2.4,2)  node {Q$_2$};
		\draw (2,2.375)  node {Q$_1$};
		\draw (-.56,-.35) node {PQ$_2$};
		\draw  (3.127,-.27)  node {P$_2$};
	\end{tikzpicture}
\caption{Delineation of a vertices configuration and the corresponding enlargement set.}
\label{fig_enlargement}
\end{figure}
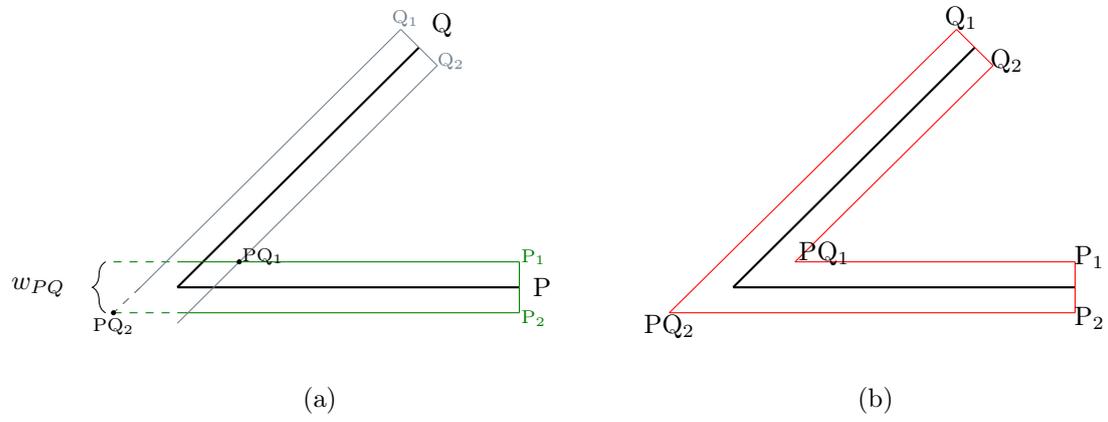
\fi

As explained in Section~\ref{topological_derivatives_cost_function}, one cannot calculate an analytic solution to the exterior problem~\eqref{corrector_vari_form}, defined at the unbounded domain $\R^2$, see Fig.~\ref{fig_trunc_domain}(a). Thus, we adapt the technique developed in the works~\cite{AmsGan19, GanNeeSti24, GanStu22}, where the exterior problem is approximated numerically by truncating the domain at a large radius $R$ (e.g. $R=30$) and using a finite element discretization with homogeneous Dirichlet boundary conditions on the truncated domain boundary, see Fig.~\ref{fig_trunc_domain}(b). 
\ifpics
\begin{figure}
\centering
	\begin{tikzpicture}[scale=.25]
		\node at (0,-13) {(a)};
		\draw (12,5) node[anchor=north east] {$\omega$};
		\fill[gray,path fading=fade out] (0,0) circle (10cm);
		\draw[->,dashed,thick] (0,-11) -- (0,11); 
		\draw[->,dashed,thick] (-11,0) -- (11,0); 
		\draw[fill=black] (0,0) circle (4pt);
		\draw[red] (3,.225) -- (.543,.225);
		\draw[red] (.543,.225) -- (2.28,1.96);
		\draw[red] (2.28,1.96) -- (1.96,2.28);
		\draw[red] (1.96,2.28) -- (-.56,-.225) ;
		\draw[red] (-.56,-.225) -- (3,-.225);
		\draw[red] (3,-.225) -- (3,.225);
		\draw (7,7) node[anchor=north east] {$\R^2$};
	\end{tikzpicture}
	\hspace{1.5cm}
	\begin{tikzpicture}[scale=.25]
		\node at (0,-13) {(b)};
		\draw (12,5) node[anchor=north east] {$\omega$};
		\draw[->,dashed,thick] (0,-11) -- (0,11); 
		\draw[->,dashed,thick] (-11,0) -- (11,0); 
		\draw [fill=gray(x11gray)] circle (10cm);
		\clip[draw] circle (10cm);
		\draw[->,dashed,thick] (0,-11) -- (0,11); 
		\draw[->,dashed,thick] (-11,0) -- (11,0); 
		\draw[fill=black] (0,0) circle (4pt);
		\draw[red] (3,.225) -- (.543,.225);
		\draw[red] (.543,.225) -- (2.28,1.96);
		\draw[red] (2.28,1.96) -- (1.96,2.28);
		\draw[red] (1.96,2.28) -- (-.56,-.225) ;
		\draw[red] (-.56,-.225) -- (3,-.225);
		\draw[red] (3,-.225) -- (3,.225);
		\draw (7,7) node[anchor=north east] {$B_R$};
	\end{tikzpicture}
\caption{(a) Rescaled perturbed domain after passing to limit. (b) Truncated domain $B_R$.}
\label{fig_trunc_domain}
\end{figure}
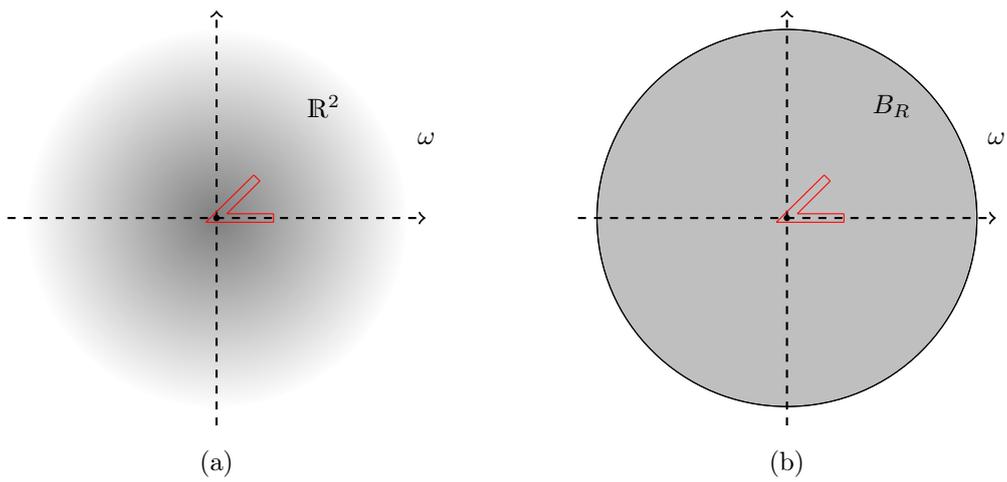
\fi

\subsubsection{One-shot detection process for given inclusion shape} 
\label{one_shot_detection_process}
Here, we present the different steps for our one-shot detection process for a given inclusion shape $\omega$ representing the vertex configuration to be detected, cf. Fig.~\ref{fig_classification_vertices}.
\begin{enumerate}
\item Solve the boundary value problem~\eqref{eq_state_strongform},
\[
    \left\{\begin{aligned}
    -\alpha\mbox{ div}(\lambda_\Omega\nabla u)+u=&f&&\mbox{in }D,                  \\
    \frac{\partial u}{\partial n}=&0&&\mbox{on }\partial D,         
    \end{aligned} \right.
\]   
where $f$ represents the intensity values of the considered image (i.e. a given data). Here, $\Omega = \emptyset$ and thus $\lambda_\Omega(x) =\lambda^{\text{out}}$.
\item For the given inclusion shape $\omega$, compute an approximation of the exterior problem~\eqref{corrector_vari_form} in the truncated domain $B_R(0)$, see Fig.~\ref{fig_trunc_domain}(b), i.e., find $K_\omega^{(1)} \in H^1_0(B_R(0))$ such that
\begin{align} \label{eq_K_trunc}
\int_{B_R(0)}\alpha\lambda_{\omega}\nabla K^{(1)}_{\omega}[\bm{e}^{(k)}]\cdot\nabla v\dx=-\alpha(\lambda^{\text{in}}-\lambda^{\text{out}})\int_{\omega}\bm{e}^{(k)}\cdot\nabla v\dx,\quad\text{for } k=1,2,
\end{align}
for all $v \in H^1_0(B_R(0))$ where 
\[  
\lambda_{\omega}(x)=
    \begin{cases}
      \lambda^{\text{in}},&x\in\omega,\\
    \lambda^{\text{out}},&x\in B_R(0)\setminus\overline{\omega}. 
    \end{cases} 
\]
In our experiments, we choose the radius as $R=30$ as it is also suggested in~\cite{GanStu22}.
See, e.g., Fig.~\ref{fig_exterior_pb} for numerical approximations of $K_\omega^{(1)}$ for different inclusion shapes $\omega$.
\item Calculate an approximation to the matrix $\mathcal P_\omega^{(2)}$~\eqref{second_order_weak_polarisation_matrix} for the given inclusion shape $\omega$,
\[
\mathcal{P}^{(2)}_\omega=\left[\frac{1}{|\omega|}\int_\omega\nabla K^{(1)}_\omega[\bm{e}^{(1)}]\otimes x\dx\quad\quad\frac{1}{|\omega|}\int_\omega\nabla K^{(1)}_\omega[\bm{e}^{(2)}]\otimes x\dx\right]\in\R^{4\times2}.
\]
\ifpics
\begin{figure}
\centering
\begin{tabular}{cc}
	\includegraphics[scale=0.19]{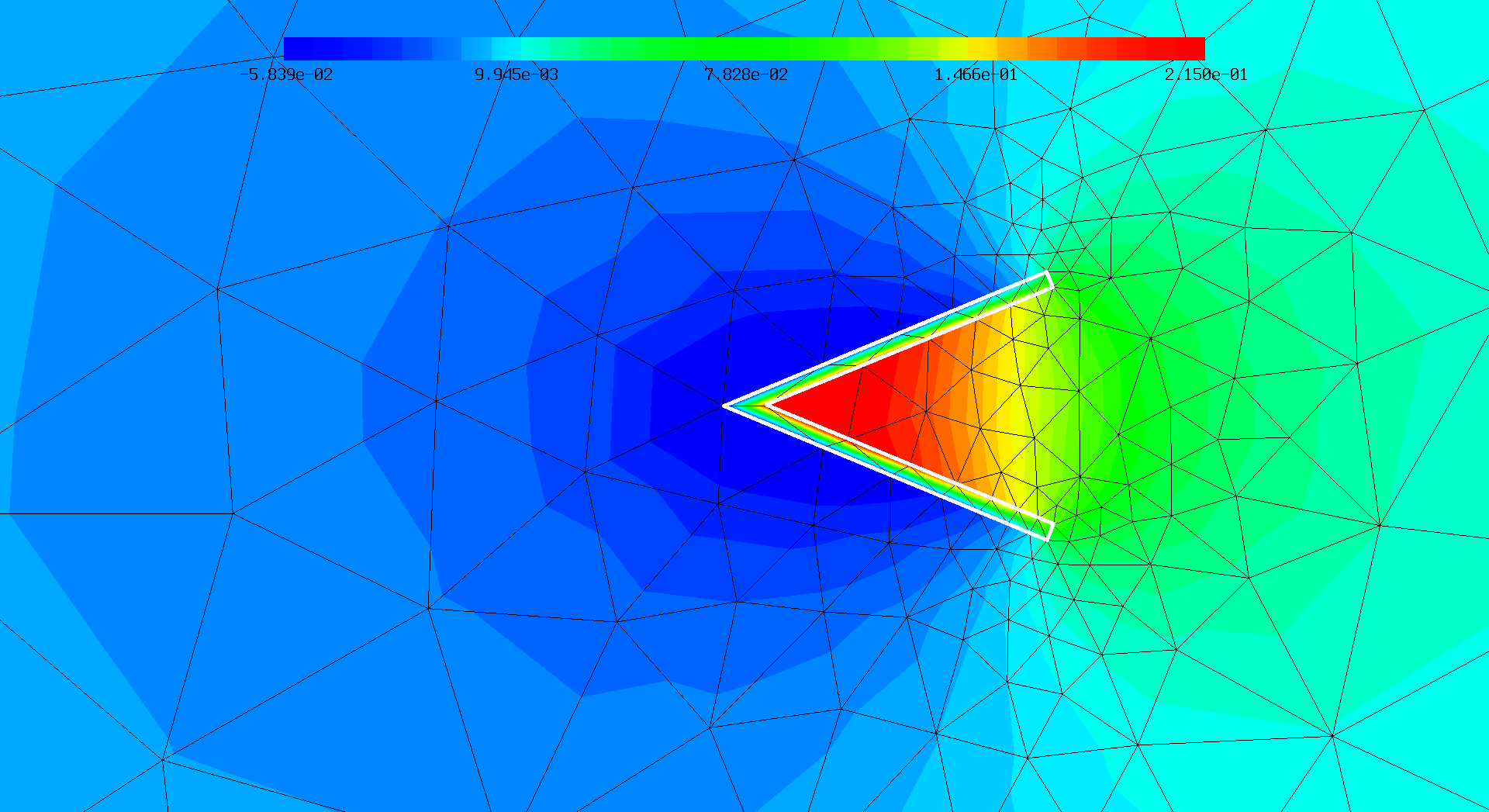}
	&
	\includegraphics[scale=0.19]{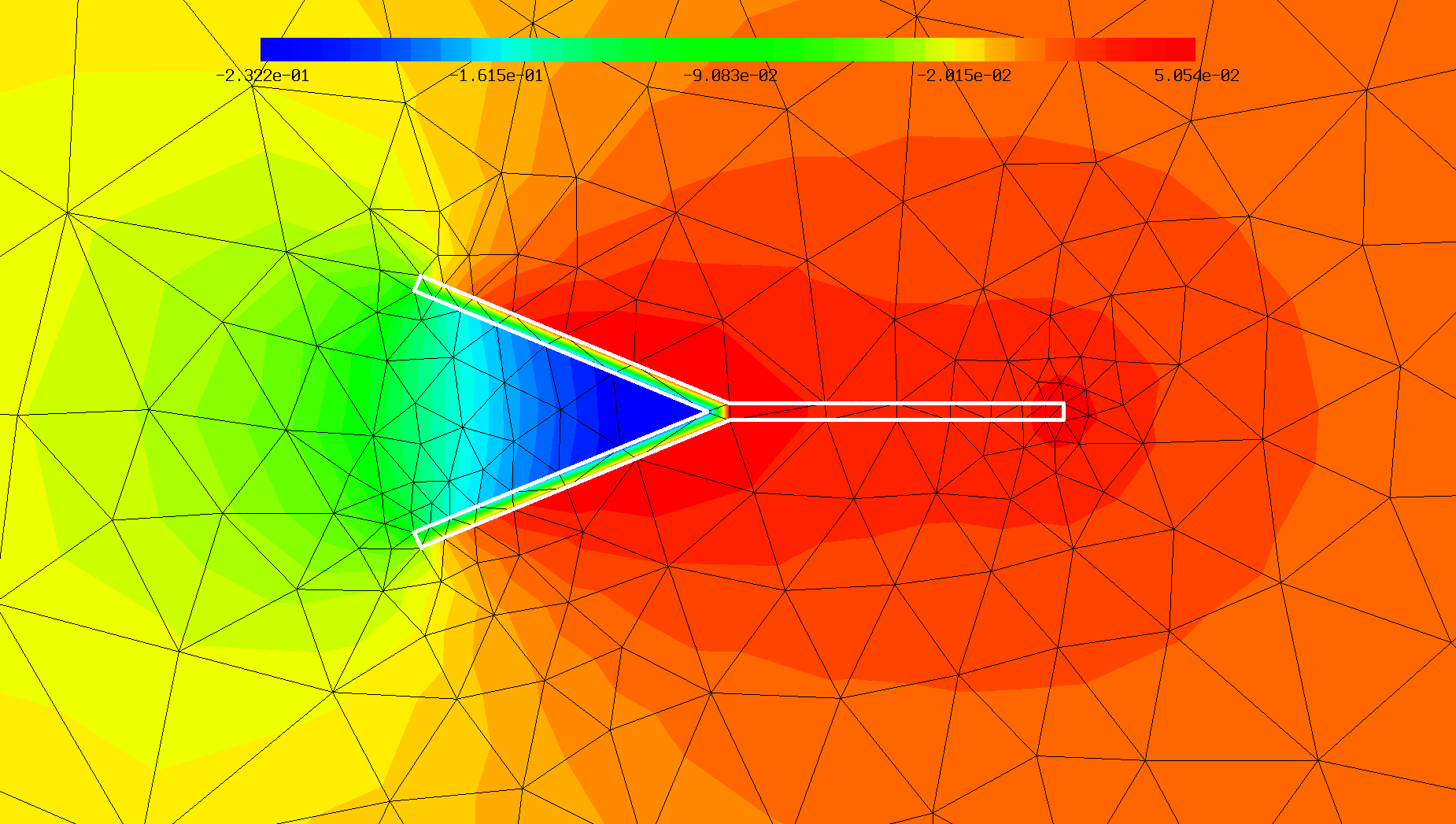} \\
 (a) & (b)\\
	\includegraphics[scale=0.19]{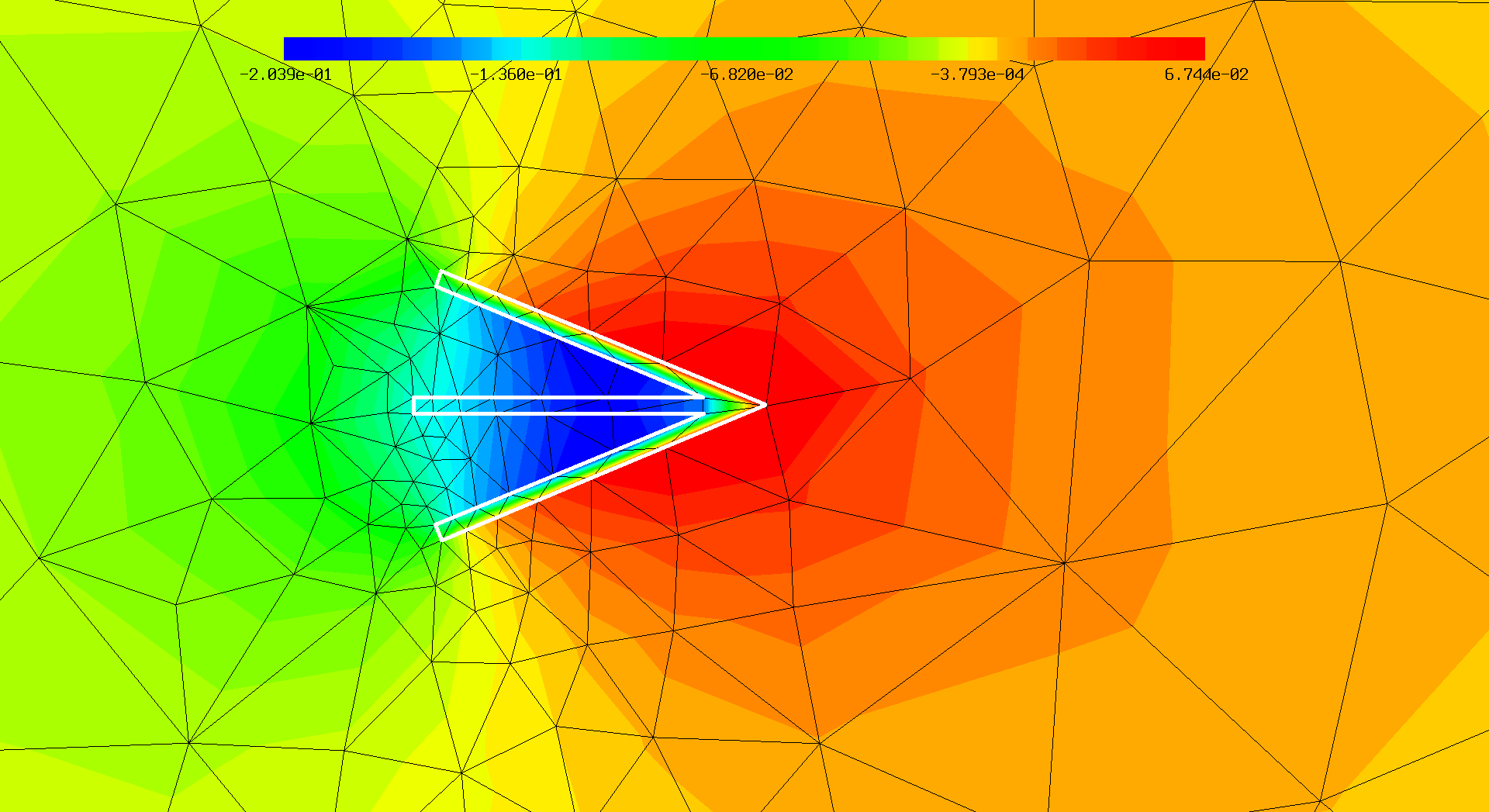}
	&
	\includegraphics[scale=0.19]{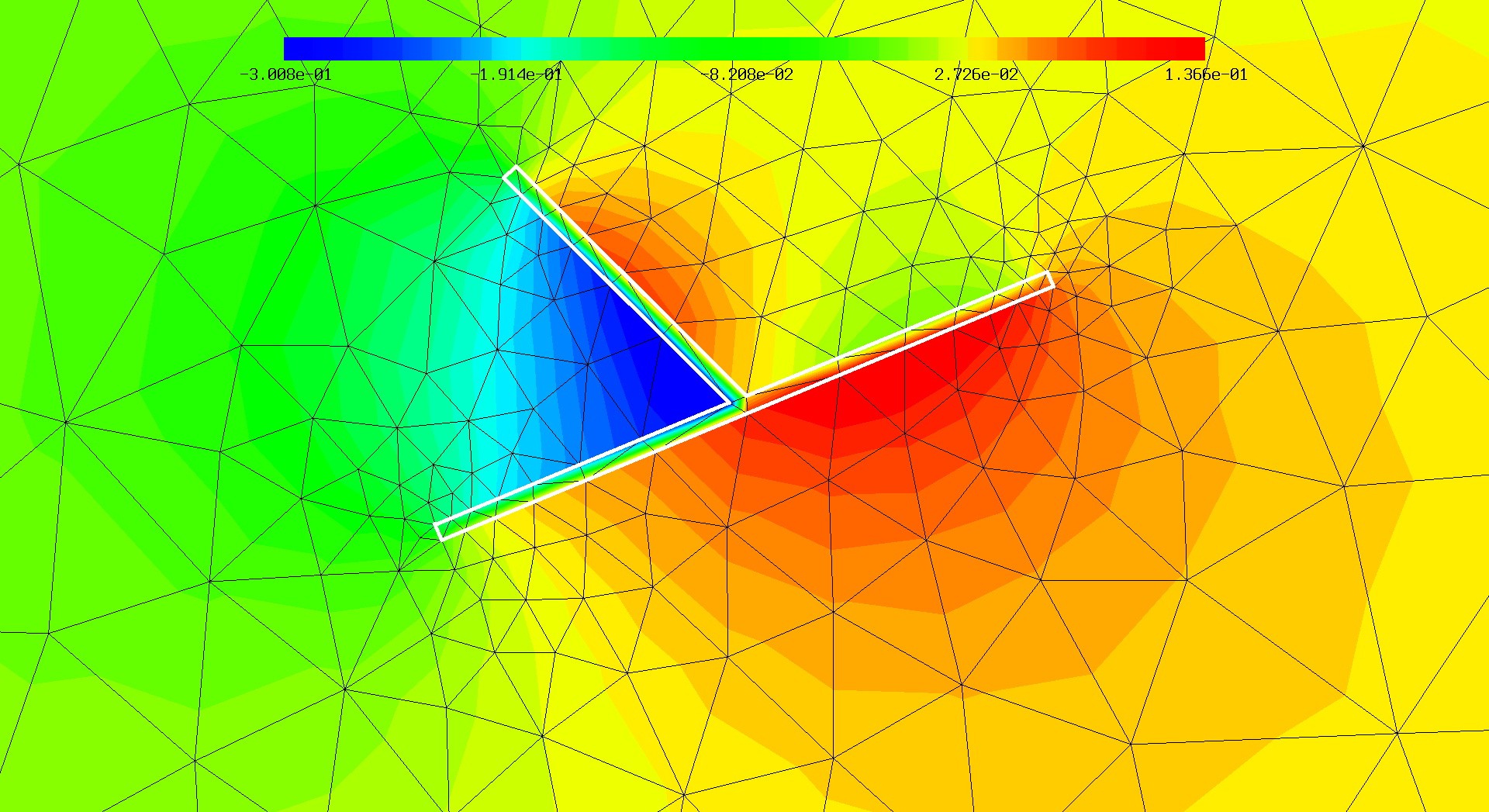}\\
 (c) & (d)\\
	\includegraphics[scale=0.19]{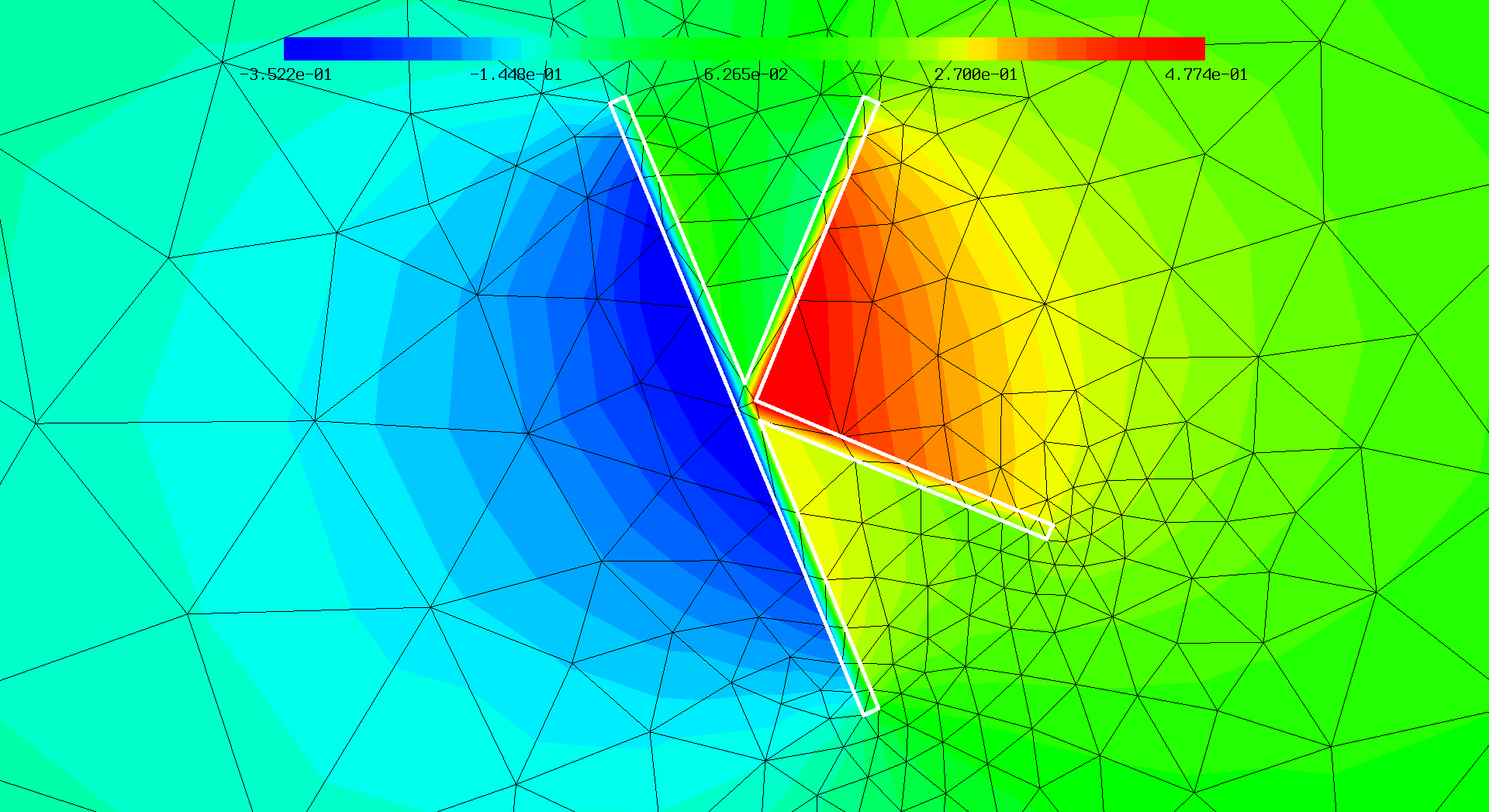}
	&
	\includegraphics[scale=0.19]{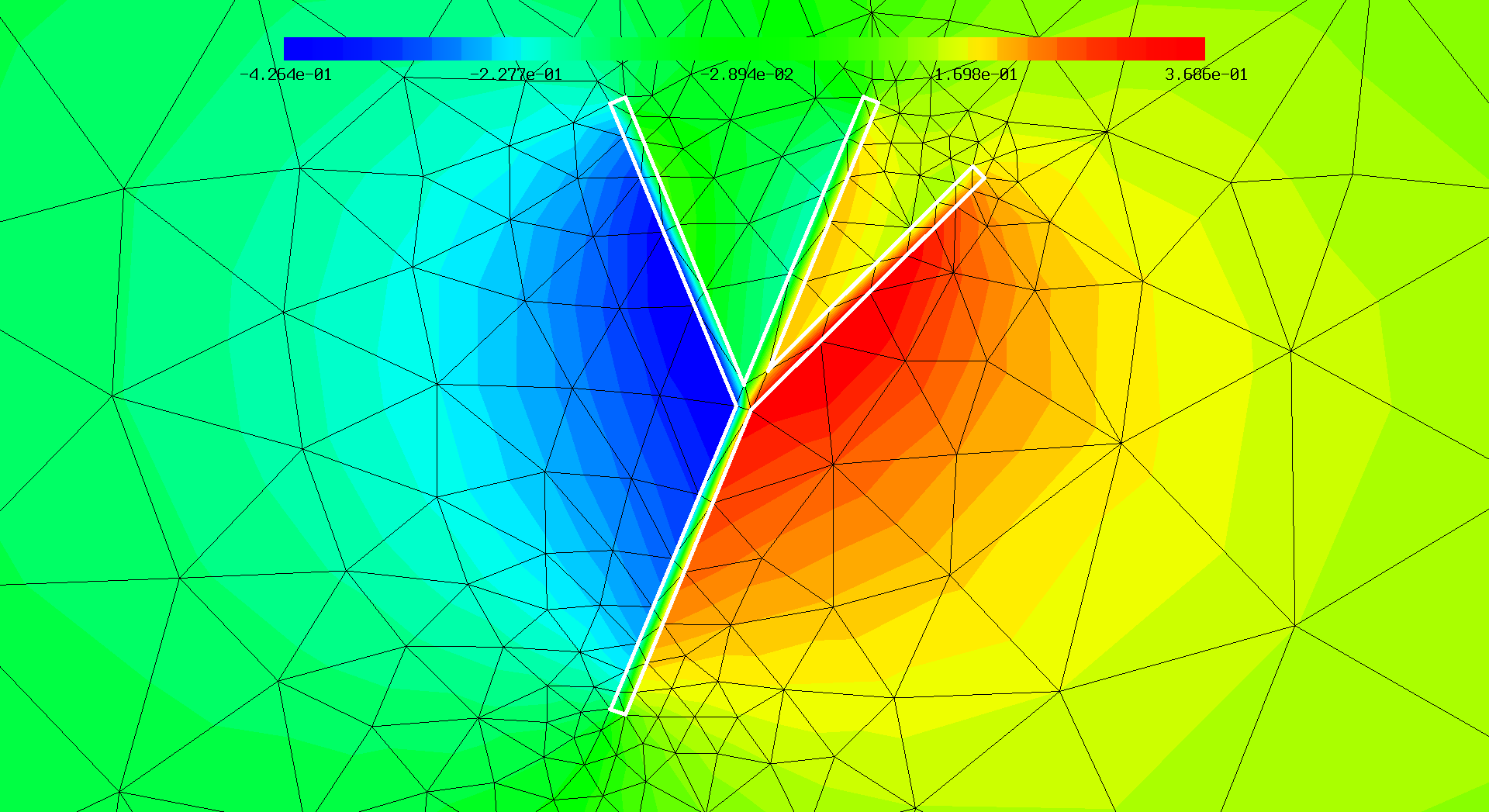}\\
 (e) & (f)\\
	\includegraphics[scale=0.19]{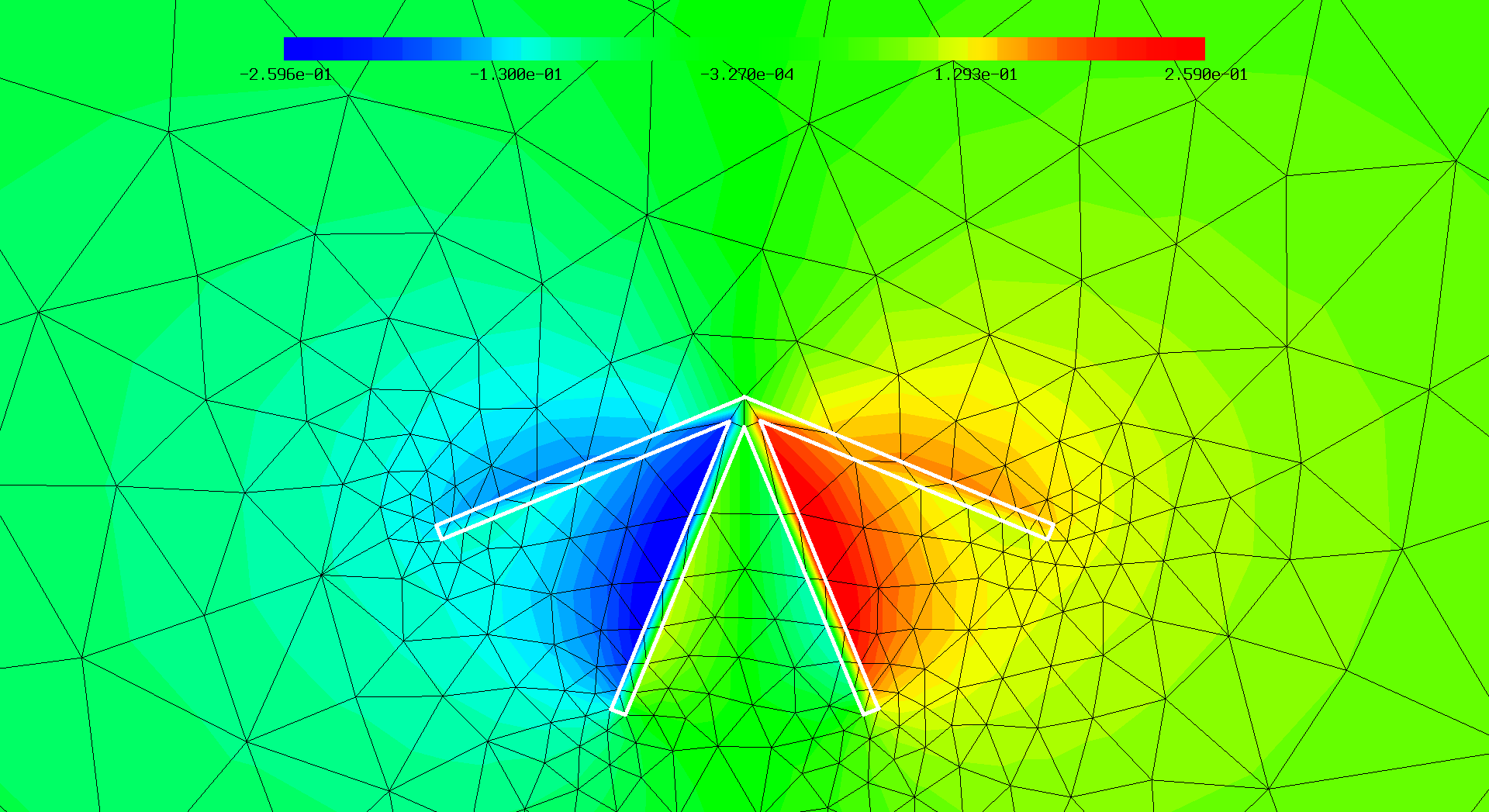}
	&
	\includegraphics[scale=0.19]{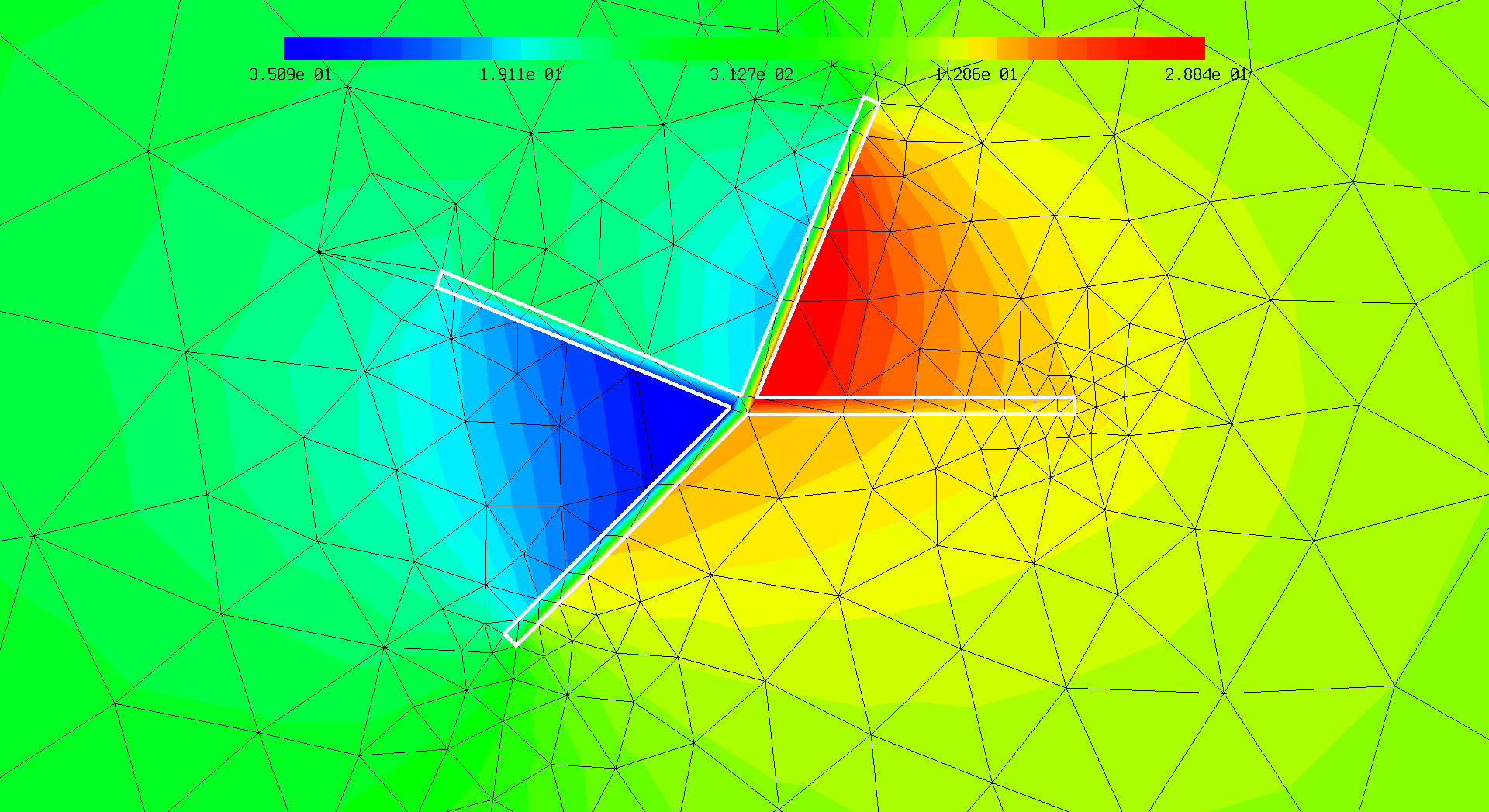}\\
 (g) & (h)
\end{tabular}
\caption{Numerical solution $K^{(1)}_{\omega}[\bm{e}^{(1)}]$ of the exterior problem~\eqref{corrector_vari_form} for $\lambda^{\text{in}}=0.05$, $\lambda^\text{out}=1$ and different examples of the inclusion shape $\omega$: (a) 'L-corner', (b) 'Fork', (c) 'Arrow', (d) 'T-junction', (e) 'K-junction', (f) 'X-junction', (g) 'Peak', (h) 'Multi-junction'.}
\label{fig_exterior_pb}
\end{figure}
\fi
\item Evaluate the second-order topological derivative~\eqref{TD2} with respect to the given inclusion shape $\omega$ at all spatial points $z\in D$,
\[
d^2\mathcal{J}(\Omega)(z,\omega)=\alpha(\lambda^{\text{in}}-\lambda^{\text{out}}) \text{vec}(\nabla^2 u(z))^\top \left[\mathcal{X}+\mathcal{P}^{(2)}_{\omega}\right] \nabla u(z).
\] 
\end{enumerate}
The vertex whose characteristics (the number of lines that form it and its angles) is represented by the inclusion shape $\omega$ is likely to be located at the point $z$ where the second-order topological derivative attains its most negative values. 

\subsubsection{Detection of unknown inclusion shape} 
\label{sec_detectUnknown}
In realistic situations, the particular configurations to be recovered are not known. Due to the relatively low computational effort of the procedure described in Section~\ref{one_shot_detection_process} (only three boundary value problems have to be solved), our approach is still feasible to get information about both the location and the characteristics of the vertices appearing in the image simultaneously. For that purpose, we need to perform step (i) of Section~\ref{one_shot_detection_process}, i.e., the solution of the boundary value problem~\eqref{eq_state_strongform}, only once and compute the matrix $\mathcal P_\omega^{(2)}$ for a (possibly large) set of potential inclusions shapes $\Theta := \{ \omega^{(i)} \}_{i=1}^N$, (i.e., we need to solve the truncated exterior problem~\eqref{eq_K_trunc} $2N$ times). We then propose to rank all $N$ inclusion shapes with respect to the minimal value attained by the corresponding second-order topological derivative $d^2 \mathcal J(\Omega)(\cdot, \omega^{(i)})$. The resulting list gives information about which vertex configuration is most likely to be situated at which position. The procedure is summarized in Algorithm~\ref{algo_detectShape}. Note that the first for-loop, i.e., the calculation of the matrices $\mathcal P^{(2)}_{\omega^{(i)}}$, is independent of the particular image data. In fact, it only depends on the parameters $\alpha$, $\lambda^{\text{in}}$ and $\lambda^{\text{out}}$ and the set of inclusion shapes $\Theta$. Thus, when these parameters are kept fixed, the first for-loop has to be done only once and can be considered as a pre-computation phase.
\begin{algorithm}
\begin{algorithmic}
    \State{\textbf{Input:} intensity function $f:D \rightarrow \R$ representing image, set of inclusion shapes $\Theta := \{ \omega^{(i)} \}_{i=1}^N$}
    \For{$\omega^{(i)} \in \Theta$}
        \State{Solve approximated exterior problem~\eqref{eq_K_trunc} for $\omega=\omega^{(i)}$ for $k=1,2$ }
        \State{Calculate and store matrix $\mathcal P_\omega^{(2)}$ for $\omega =\omega^{(i)}$}
    \EndFor
    \State{Solve boundary value problem~\eqref{eq_state_strongform}}
    \For{$\omega^{(i)} \in \Theta$}
        \State{Read matrix $\mathcal P_\omega^{(2)}$ for $\omega =\omega^{(i)}$}
        \State{Evaluate second-order topological derivative~\eqref{TD2} for $\omega =\omega^{(i)}$ for all points $z \in D$}
    \EndFor
    \State{Rank inclusion shapes in ascending order according to $\underset{z \in D}{\mbox{min }} d^2 \mathcal J(\Omega)(z, \omega)$}
    \end{algorithmic}
    \caption{Detection algorithm}
    \label{algo_detectShape}
\end{algorithm}

In practice, the set of inclusion shapes $\Theta$ is chosen as follows: We make a subdivision of the interval $[0^\circ, 360^\circ]$ into $m$ angles $\alpha_j = j \Delta \alpha$, $j=0,\dots, m-1$ with precision $\Delta \alpha = 360/m$. When interested in vertices of type 'L-corner', we consider all inclusion shapes obtained by the procedure described in Section~\ref{choice_pixel_configurations} with $\text{deg}(P)=\alpha_j$ and $\text{deg}(Q)=\alpha_k$, $0 \leq j<k \leq m-1$, i.e., $\Theta = \Theta^{(2)} := \{ \omega[\alpha_j, \alpha_k]: 0 \leq j<k \leq m-1\}$. Similarly, when searching for vertex configurations consisting of three lines like 'Fork', 'Arrow' or 'T-junction', we define $\Theta = \Theta^{(3)} :=\{ \omega[\alpha_j, \alpha_k, \alpha_\ell]: 0 \leq j < k  < \ell \leq m-1\}$, and analogously in the case of four lines. 

\subsection{Numerical experiments}
\label{numerical_experiments}
We consider the examples given in Fig.~\ref{fig_cube_example_vertices_classification} to illustrate the effectiveness of the proposed one-shot detection process. The image size is chosen as $100\times100$ pixels and we use piecewise bilinear, globally continuous finite elements on this grid. The parameters $\alpha=8$, $\lambda^{\text{in}}=0.05$ (representing {\it{edge}}) and $\lambda^{\text{out}}=1$ (representing {\it{non-edge}}) are chosen in the same way as in~\cite{BerGraMusSch14}. 
The image intensity is piecewise constant across the $n_R$ subregions $R_1, \dots, R_{n_R}$ of the image, see Fig.~\ref{fig_cube_example_vertices_classification}, with values $f_1, \dots, f_{n_R}$, i.e., $f(x) = \sum_{j=1}^{n_R} \chi_{R_j}(x) f_j$ with $\chi_S(x)$ the characteristic function of a set $S\subset D$. We collect the intensity values $f_i$ in a vector $\underline f = (f_1, \dots, f_{n_R})^T \in \R^{n_R}$. All numerical experiments are conducted using the finite element software package NGSolve \cite{Sch14}. For reproducibility reasons, the used code is available at \cite{GanMejSch24a}.

\subsubsection{Cube: 'L-corner', 'Fork' and 'Arrow' detection}
\label{corner_detection}
The first example is a 3D unit cube in a 2D image with three sides of different colors and a background, i.e., $n_R=4$, see Fig.~\ref{fig:Cube2D}. The three intensity values $f_{1}$, $f_{2}$, $f_{3}$, are distributed for the cube's side regions $R_1$, $R_2$, $R_3$ and $f_{4}=0$ for the image background region $R_4$. We consider this example because the cube's corners cover three classes of vertices: vertices A, C, and F are classified as 'L-corners', vertices B, D, and G as 'Arrows', and vertex E as 'Fork', see Fig.~\ref{fig_cube_example_vertices_classification}(a). 

\paragraph{Localization of given inclusion shapes.}
We first present results for finding the location of a given inclusion shape as discussed in Section~\ref{one_shot_detection_process}.
\ifpics
\begin{figure}
\centering
\begin{tikzpicture}
    \node[anchor=south west,inner sep=0] at (-6.5,-0.5) {\includegraphics[scale=.35]{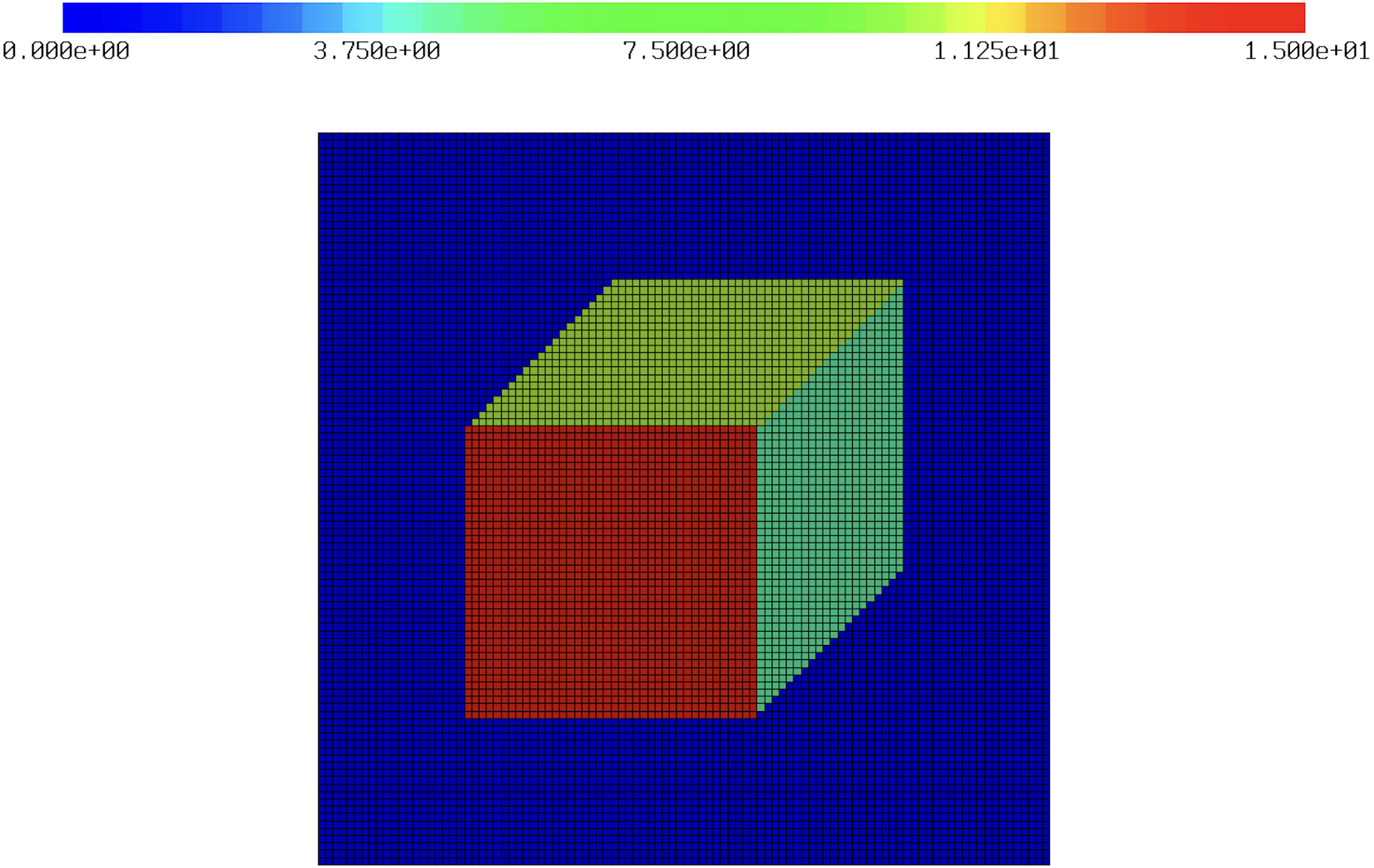}};
    \node[text=gray(x11gray)] at (-2.75,.75) {A};
    \node[text=gray(x11gray)] at (-2.75,3.25) {B};
    \node[text=gray(x11gray)] at (-1.5,4.5) {C};
    \node[text=gray(x11gray)] at (1.25,4.5) {D};
    \node[text=gray(x11gray)] at (0,3.25) {E};
    \node[text=gray(x11gray)] at (0,.75) {G};
    \node[text=gray(x11gray)] at (1.25,2) {F};
    \end{tikzpicture}
\caption{Cube 2D.}
\label{fig:Cube2D}
\end{figure}
\fi
Here we choose $f_{R_1}=15$, $f_{R_2}=10$, $f_{R_3}=5$, i.e., $\underline f= \underline f^{(1)} = (15, 10, 5, 0)^T$ as in Fig.~\ref{fig:Cube2D}. Figure~\ref{fig:Cube2D:vertsAB}(a) shows the map of the second-order topological derivative for the right-angled inclusion shape with angles deg(P)=$0^\circ$, deg(Q)=$90^\circ$, as indicated in the figure. The second-order topological derivative attains values close to its global minimum only in the vicinity of the vertex A. Thus, the indicator function gives the exact location of the vertex, represented by the topological derivative map for the given inclusion shape. The same holds true also for vertex B by means of an inclusion shape with three lines and deg(P)=$0^\circ$, deg(Q)=$45^\circ$, deg(R)=$270^\circ$, see Fig.~\ref{fig:Cube2D:vertsAB}(b), and vertex G, see Fig.~\ref{fig:Cube2D:vertsGE}(a) where deg(P)=$45^\circ$, deg(Q)=$90^\circ$, deg(R)=$180^\circ$. When attempting to identify vertex E of class 'Fork' in Fig.~\ref{fig:Cube2D:vertsGE}(b), we can see that, while the desired vertex is visible in the second-order topological derivative, it is dominated by two other vertices (B and G). Here, we observe that the contrast between intensities of subregions adjacent to the vertices plays a crucial role. Vertices whose adjacent subregions have high contrast in intensity are more likely to be detected, even if the considered inclusion shape is not or only partially suitable. For vertices C and D, Fig.~\ref{fig:Cube2D:vertsCD:f1} shows similar behavior with sometimes even full edges dominating the sought-for vertex due to the high contrast between the red and blue material in Fig.~\ref{fig:Cube2D}. 
\ifpics
\begin{figure}
\begin{tabular}{cc}
    \begin{tikzpicture}
    \node[anchor=south west,inner sep=0] at (-6.5,-0.5) {\includegraphics[scale=.23]{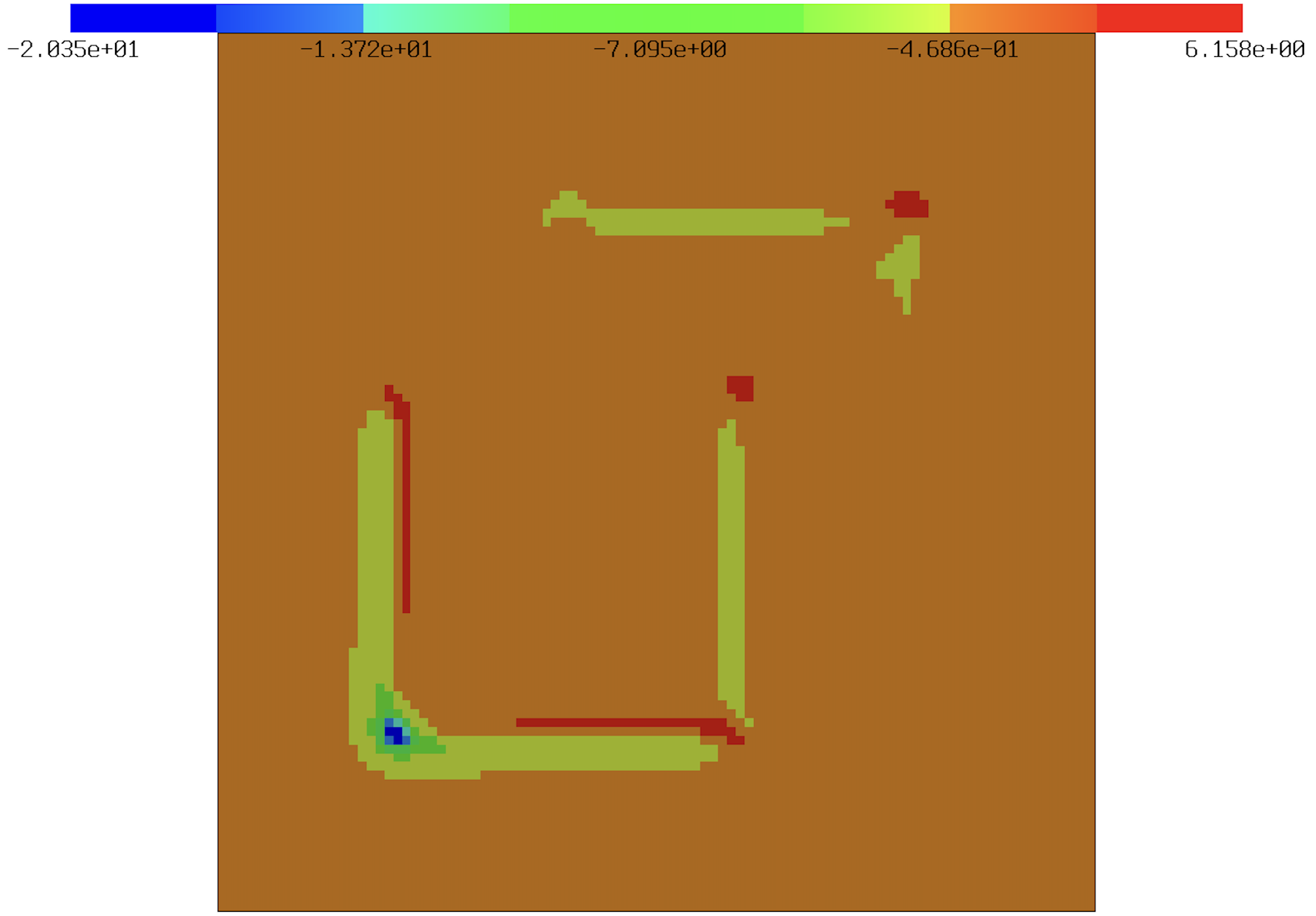}};
    \draw[gray(x11gray), ultra thick, fill] (0.,0.) circle (1.2);
    \draw[line width=3pt, black] (0,0) -- (1,0) (0,0) -- (0,1);
    \end{tikzpicture}
    &
    \begin{tikzpicture}
    \node[anchor=south west,inner sep=0] at (-6.5,-0.5) {\includegraphics[scale=.23]{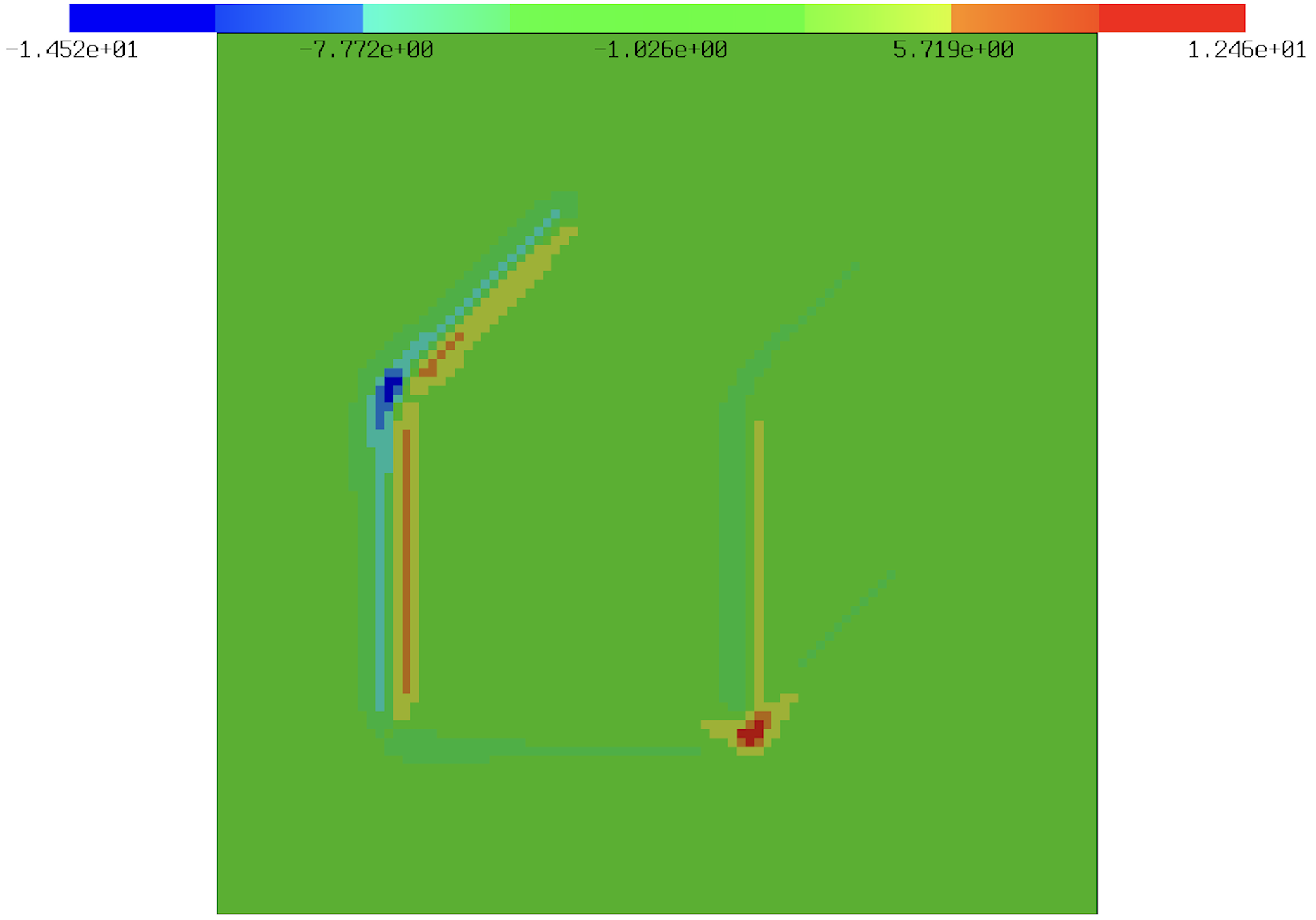}};
    \draw[gray(x11gray), ultra thick, fill] (0.,0.) circle (1.2);
    \draw[line width=3pt, black] (0,0) -- (1,0) (0,0) -- (1/1.41421356237,1/1.41421356237) (0,0) -- (0,-1); 
    \end{tikzpicture} \\
    (a) & (b)
\end{tabular}
\caption{Second-order topological derivative $d^2 \mathcal J(\Omega)(\cdot, \omega)$ for $\underline f= \underline f^{(1)} = (15, 10, 5, 0)^T$ and indicated inclusion shapes $\omega$: (a) deg(P)=$0^\circ$, deg(Q)=$90^\circ$ to identify vertex A. (b) deg(P)=$0^\circ$, deg(Q)=$45^\circ$, deg(R)=$270^\circ$ to identify vertex B.}
\label{fig:Cube2D:vertsAB}
\end{figure}
\begin{figure}
\begin{tabular}{cc}
    \begin{tikzpicture}
    \node[anchor=south west,inner sep=0] at (-6.5,-0.5) {\includegraphics[scale=.23]{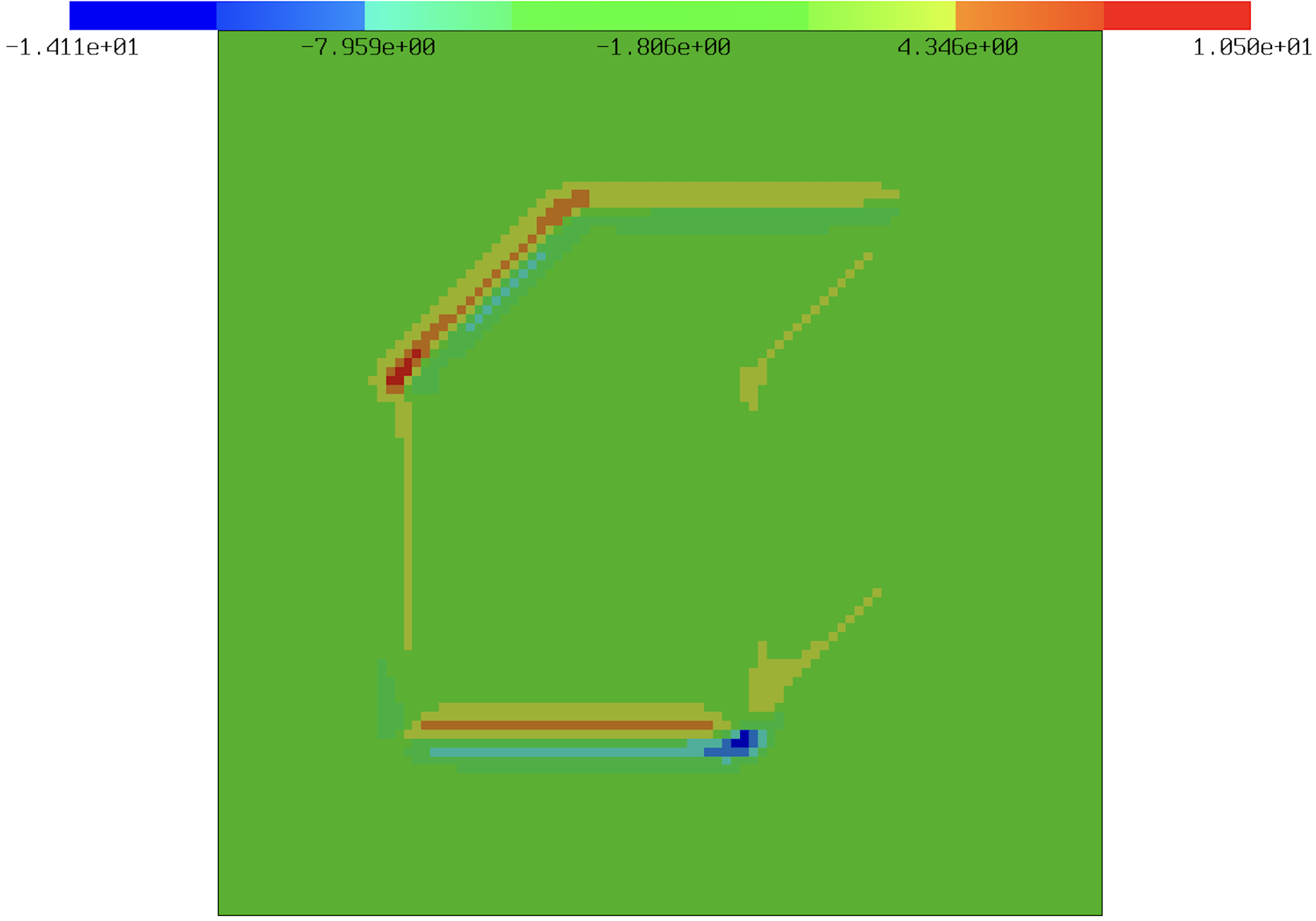}};
    \draw[gray(x11gray), ultra thick, fill] (0.,0.) circle (1.2);
    \draw[line width=3pt, black] (0,0) -- (1/1.41421356237,1/1.41421356237) (0,0) -- (0,1) (0,0) -- (-1,0) ; 
    \end{tikzpicture}
    &
    \begin{tikzpicture}
    \node[anchor=south west,inner sep=0] at (-6.5,-0.5) {\includegraphics[scale=.23]{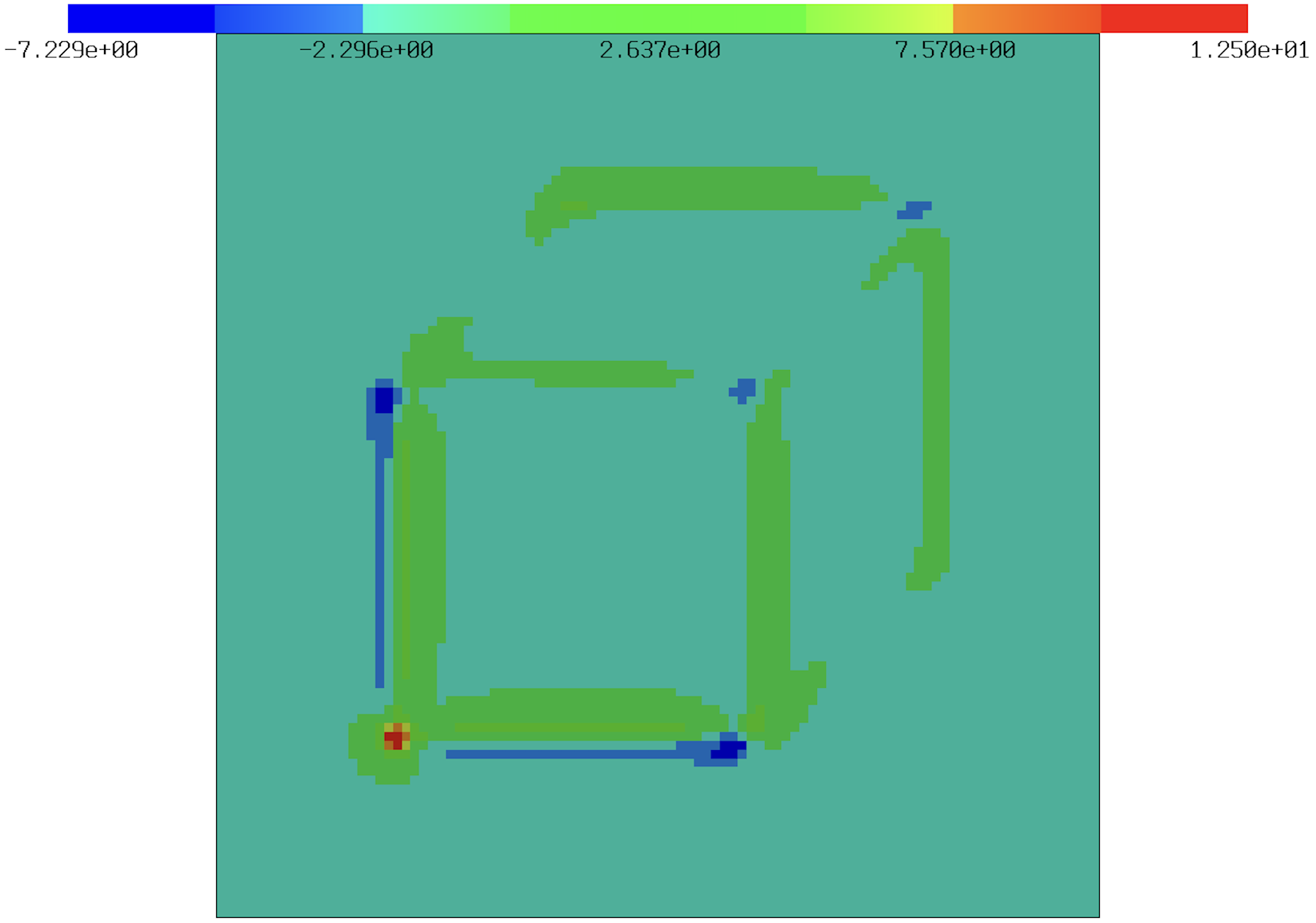}};
    \draw[gray(x11gray), ultra thick, fill] (0.,0.) circle (1.2);
    \draw[line width=3pt, black] (0,0) -- (1/1.41421356237,1/1.41421356237) (0,0) -- (-1,0) (0,0) -- (0,-1); 
    \end{tikzpicture} \\
    (a) & (b)
\end{tabular}
\caption{Second-order topological derivative $d^2 \mathcal J(\Omega)(\cdot, \omega)$ for $\underline f= \underline f^{(1)} = (15, 10, 5, 0)^T$ and indicated inclusion shapes $\omega$: (a) deg(P)=$45^\circ$, deg(Q)=$90^\circ$, deg(R)=$180^\circ$ to identify vertex G. (b) deg(P)=$45^\circ$, deg(Q)=$180^\circ$, deg(R)=$270^\circ$ attempting to identify vertex E.}
\label{fig:Cube2D:vertsGE}
\end{figure}
\begin{figure}
\begin{tabular}{cc}
    \begin{tikzpicture}
    \node[anchor=south west,inner sep=0] at (-6.5,-0.5) {\includegraphics[scale=.23]{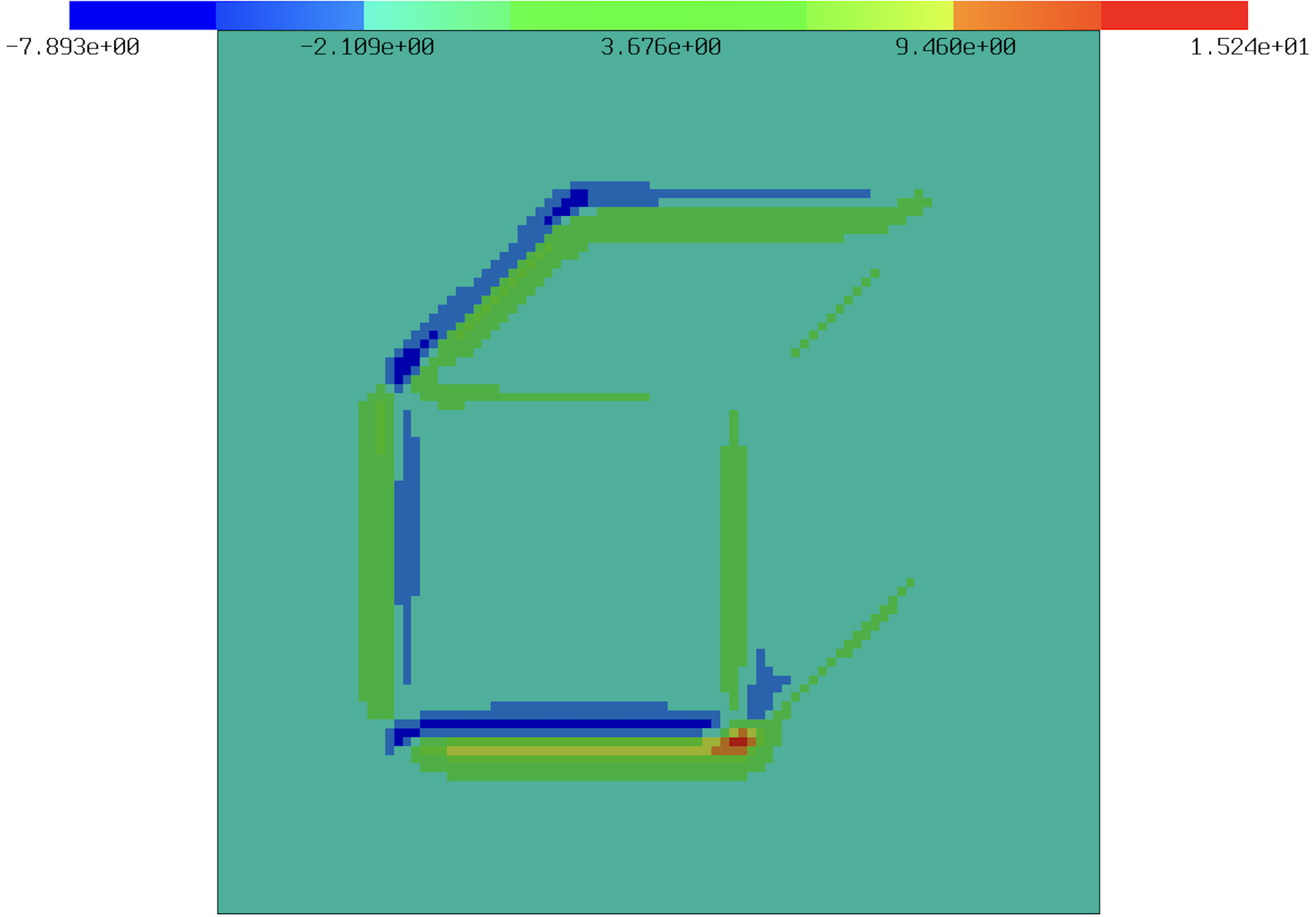}};
    \draw[gray(x11gray), ultra thick, fill] (0.,0.) circle (1.2);
    \draw[line width=3pt, black] (0,0) -- (1,0) (0,0) -- (-1/1.41421356237,-1/1.41421356237); 
    \end{tikzpicture}
    &
    \begin{tikzpicture}
    \node[anchor=south west,inner sep=0] at (-6.5,-0.5) {\includegraphics[scale=.23]{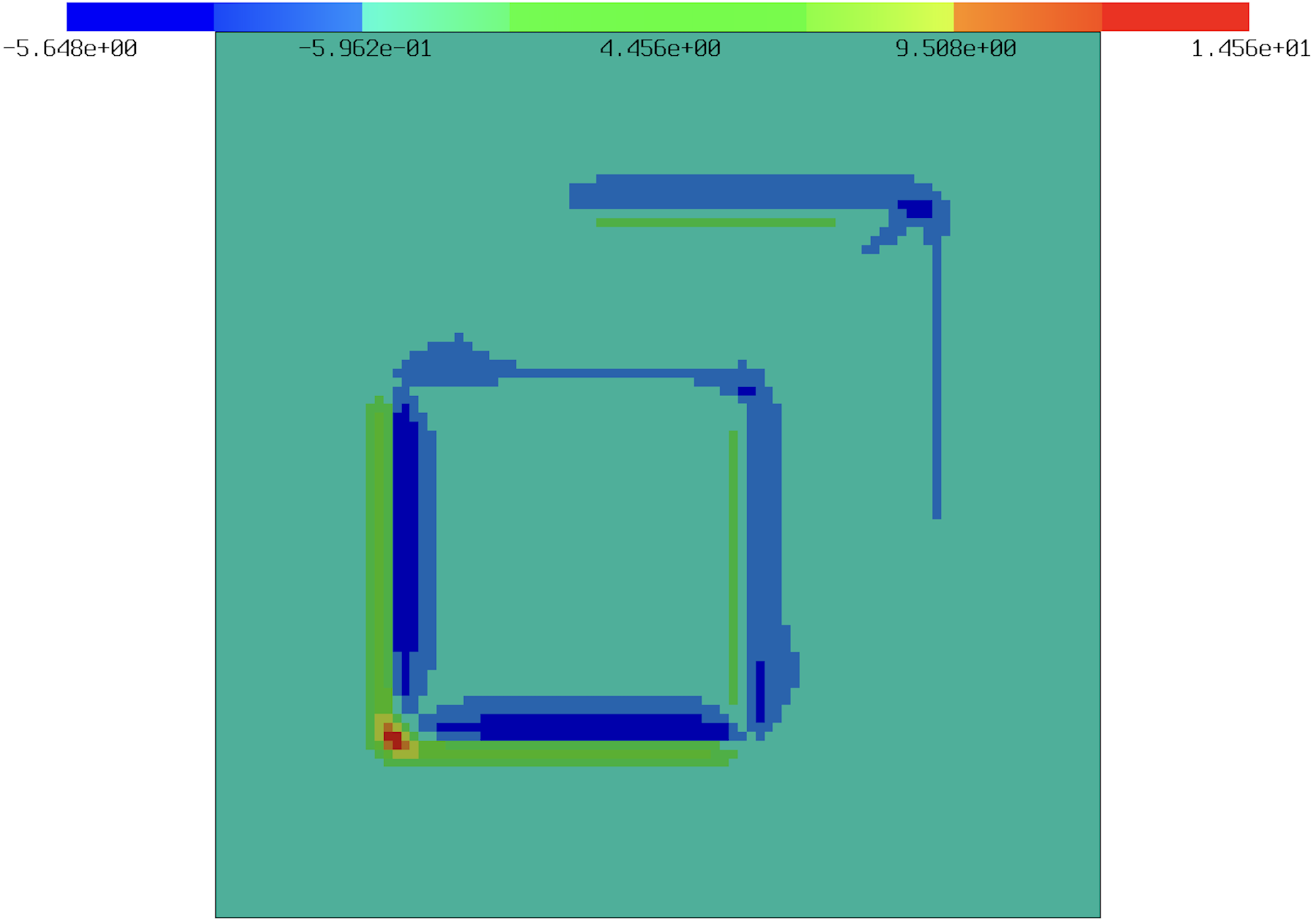}};
    \draw[gray(x11gray), ultra thick, fill] (0.,0.) circle (1.2);
    \draw[line width=3pt, black] (0,0) -- (-1,0) (0,0) -- (-1/1.41421356237,-1/1.41421356237) (0,0) -- (0,-1); 
    \end{tikzpicture} \\
    (a) & (b)
\end{tabular}
\caption{Second-order topological derivative $d^2 \mathcal J(\Omega)(\cdot, \omega)$ for $\underline f= \underline f^{(1)} = (15, 10, 5, 0)^T$ and indicated inclusion shapes $\omega$: (a) deg(P)=$0^\circ$, deg(Q)=$225^\circ$, attempting to identify vertex C. (b) deg(P)=$180^\circ$, deg(Q)=$225^\circ$, deg(R)=$270^\circ$ attempting to identify vertex D.}
\label{fig:Cube2D:vertsCD:f1}
\end{figure}
\fi
Thus, we consider another scenario of regions' color distribution, $\underline f = \underline f^{(2)} =(10, 15, 5, 0)^T$, i.e., their intensity values are $f_1=10$, $f_2=15$, $f_3=5$, $f_4=0$. Figure~\ref{fig:Cube2D:vertsCD:f2}(a) and (b) depict the second-order topological derivative for the inclusion shapes with angles deg(P)=$0^\circ$, deg(Q)=$225^\circ$ and with angles deg(P)=$180^\circ$, deg(Q)=$225^\circ$, deg(R)=$270^\circ$, representing vertex C and vertex D, respectively. The corners' detection is very accurate, where the global minimum of $d^2\mathcal{J}(\Omega)(z,\omega)$ coincides exactly with the vertices' locations. 
\ifpics
\begin{figure}
\begin{tabular}{cc}
    \begin{tikzpicture}
    \node[anchor=south west,inner sep=0] at (-6.5,-0.5) {\includegraphics[scale=.23]{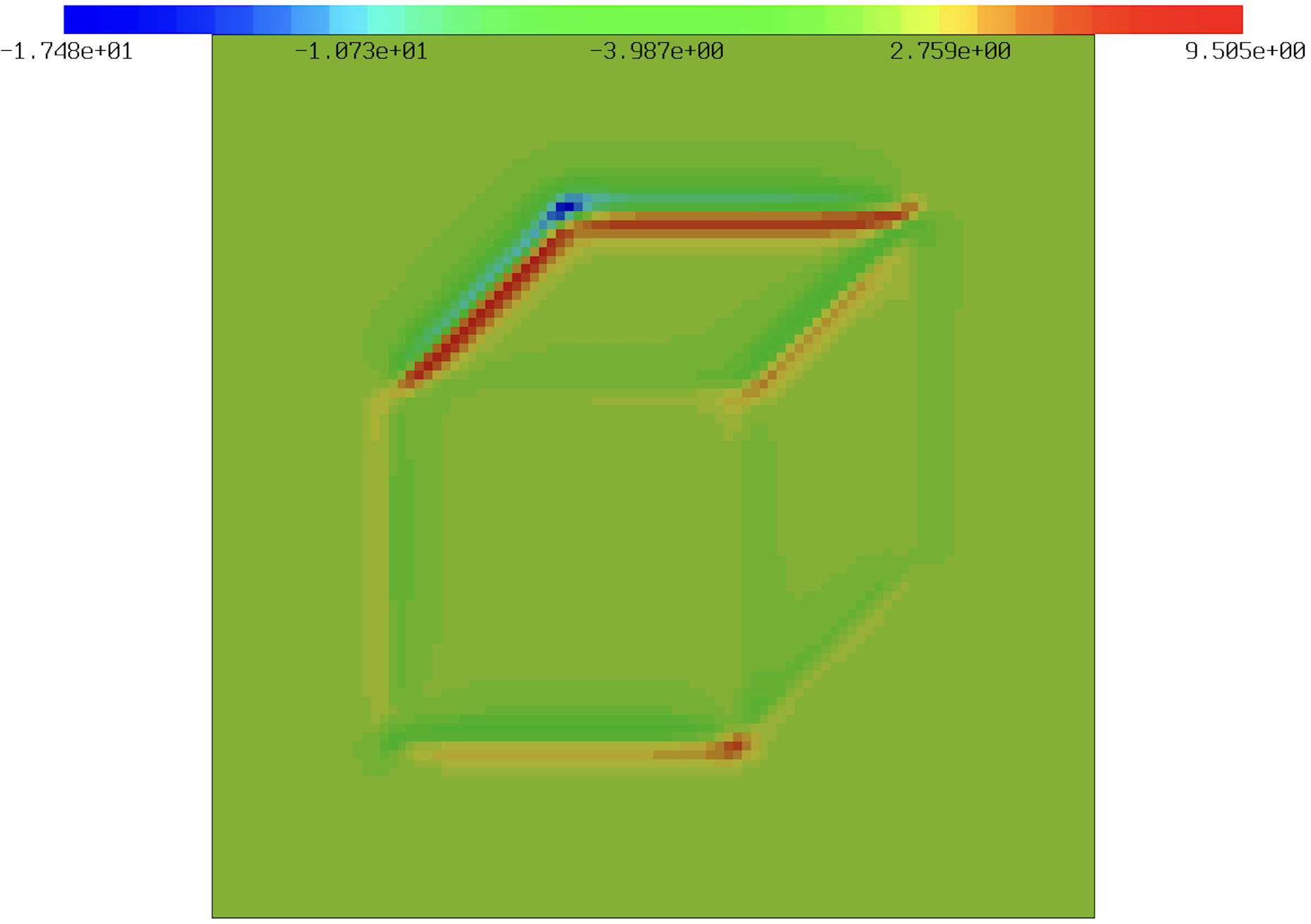}};
    \draw[gray(x11gray), ultra thick, fill] (0.,0.) circle (1.2);
    \draw[line width=3pt, black] (0,0) -- (1,0) (0,0) -- (-1/1.41421356237,-1/1.41421356237); 
    \end{tikzpicture}
    &
    \begin{tikzpicture}
    \node[anchor=south west,inner sep=0] at (-6.5,-0.5) {\includegraphics[scale=.23]{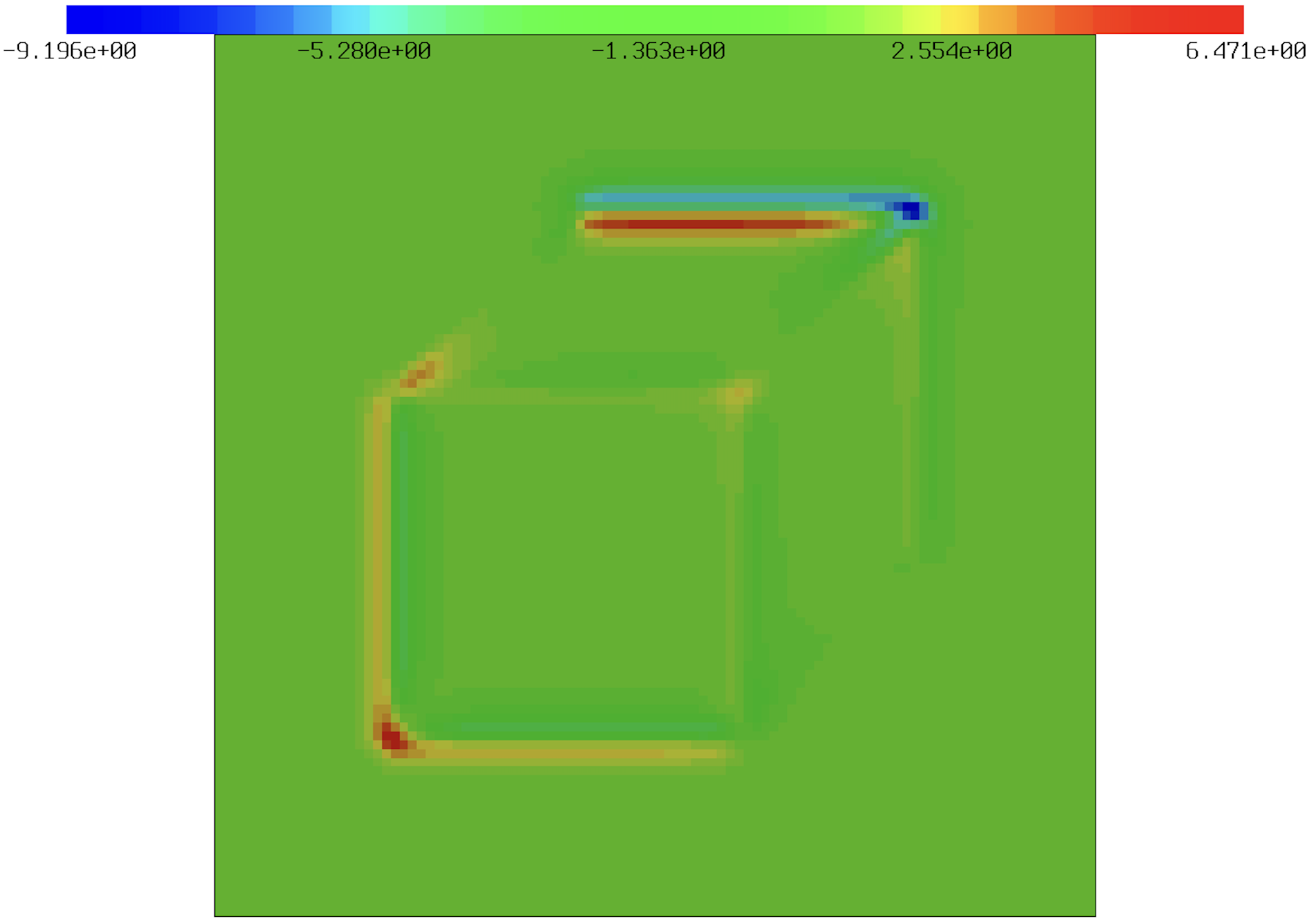}};
    \draw[gray(x11gray), ultra thick, fill] (0.,0.) circle (1.2);
    \draw[line width=3pt, black] (0,0) -- (-1,0) (0,0) -- (-1/1.41421356237,-1/1.41421356237) (0,0) -- (0,-1); 
    \end{tikzpicture} \\
    (a) & (b)
\end{tabular}
\caption{Second-order topological derivative $d^2 \mathcal J(\Omega)(\cdot, \omega)$ for $\underline f= \underline f^{(2)} = (10, 15, 5, 0)^T$ and indicated inclusion shapes $\omega$: (a) deg(P)=$0^\circ$, deg(Q)=$225^\circ$, to identify vertex C. (b) deg(P)=$180^\circ$, deg(Q)=$225^\circ$, deg(R)=$270^\circ$ to identify vertex D.}
\label{fig:Cube2D:vertsCD:f2}
\end{figure}
\fi

Finally, we deduce that the detection algorithm is effective even for different classes of vertices as depicted in Figures~\ref{fig:Cube2D:vertsAB},~\ref{fig:Cube2D:vertsGE} and~\ref{fig:Cube2D:vertsCD:f2}. The approach depends on the contrast of the regions around the vertex to be detected. It highlights the dominating corners or edges, however, it does not detect {\it{fake points}} in the scene.

\paragraph{Simultaneous determination of location and inclusion shape.}
Next, we illustrate the procedure described in Section~\ref{sec_detectUnknown}, i.e., we compute the second-order topological derivative $d^2\mathcal{J}(\cdot,\omega)$ for all $\omega$ in a set of inclusion shapes $\Theta$ and sort the inclusion shapes by the minimal values of their corresponding second-order topological derivatives. Here, we choose $\Theta:=\Theta^{(2)} \cup \Theta^{(3)}$ with $m=8$. Thus, the inclusion shapes $\omega^{(i)} \in \Theta$ represent the vertex classes of Fig.~\ref{fig_classification_vertices}(a)-(d) with different lines (two lines for 'L-corner', three lines for 'Fork', 'Arrow', 'T-junction') and with different angles varying in $[0^\circ,360^\circ]$ with precision $45^\circ$. We again consider the scene where $\underline f = \underline f^{(1)}$ as depicted in Fig.~\ref{fig:Cube2D}.
Table~\ref{table:1} presents the first thirteen inclusion shapes of $\Theta$ when ranking them according to the minimal value of the second-order topological derivative over the computational domain, i.e., $\underset{z\in D}{\mbox{min }} d^2\mathcal J(\Omega)(z, \omega^{(i)})$. We can see that the most negative value is attained for the inclusion shape $\omega^{(i)}=\omega[0^\circ, 90^\circ]$ with two lines and angles deg(P)=$0^\circ$, deg(Q)=$90^\circ$. From Fig.~\ref{fig:Cube2D:vertsAB}(a) we see that this minimal value is located exactly at the vertex A. The fact that this vertex is the first in the list is not surprising since, for the scene $\underline f^{(1)}$, vertex A is adjacent to the edges with the highest contrasts, see Fig.~\ref{fig:Cube2D}. After this, we see some only partially correct reconstructions of vertices G and B indicating corners instead of triple-junctions, see lines 2-5, as well as vertex A which has already been detected and could thus be ignored in lines 7-9. Actually, a closer look at lines 2 and 5 shows that the angles suggested by the method are present in the correct inclusion shape at vertex G (and the correct inclusion shape is actually obtained by superposition of the inclusion shapes of lines 2 and 5). The same is the case for vertex B and lines 3 and 4.
Moreover, the edges (AB) and (AG) are dominating edges in the topological derivative map (e.g. see Fig.~\ref{fig:Cube2D:vertsGE}(b) and Fig.~\ref{fig:Cube2D:vertsCD:f1}), as they represent the border between the red and blue materials of highest contrast in Fig.~\ref{fig:Cube2D}, see lines 6, 10, 12 in Table~\ref{table:1}. The full information on the vertices B and G is found in lines 11 and 13 of Table~\ref{table:1}. 

We again observe the well-known issue that areas of high contrast are dominating in the reconstruction which could be mitigated by choosing different scenes $\underline f$ and possibly averaging the obtained second-order topological derivative maps. Nevertheless, we believe that the information obtained from Table~\ref{table:1} is already valuable for detecting vertices without knowledge about their shape. The accuracy of our approach is underlined by the fact that only actual points or edges in the scene are detected and no {\it{fake}} points.
\begin{table}
\begin{center}
\begin{tabular}{|c|c|c|c|c|c|}
\hline 
&Value $d^2\mathcal{J}$ &  \multicolumn{3}{c|}{Inclusion $\omega^{(i)}$}  & Position \\
\hline
& & deg(P) & deg(Q) & deg(R)  & \\
\hline
\rowcolor{gray(x11gray)}
1&-20.348 & $0^\circ$ & $90^\circ$ & & A \\
\hline
2&-18.079  & $90^\circ$ & $180^\circ$ & & G \\
\hline
3&-17.629  & $0^\circ$ & $270^\circ$ & & B \\
\hline
4&-16.327  & $45^\circ$ & $270^\circ$ & & B \\
\hline
5&-15.105  & $45^\circ$ & $180^\circ$ & & G \\
\hline
6&-14.913  & $45^\circ$ & $315^\circ$ & & (AB) \\
\hline
7&-14.788  & $0^\circ$ & $45^\circ$ & $90^\circ$ & A \\
\hline
8&-14.684  & $90^\circ$ & $315^\circ$ & & A \\
\hline
9&-14.669  & $0^\circ$ & $135^\circ$ & & A \\
\hline
10&-14.545  & $45^\circ$ & $135^\circ$ & & (AG) \\
\hline
\rowcolor{gray(x11gray)}
11&-14.517  & $0^\circ$ & $45^\circ$ & $270^\circ$ & B \\
\hline
12&-14.419  & $225^\circ$ & $315^\circ$ & & (AG) \\
\hline
\rowcolor{gray(x11gray)}
13&-14.111  & $45^\circ$ & $90^\circ$ & $180^\circ$ & G \\
\hline
\end{tabular}
\end{center}
\caption{Minimal values of second-order topological derivative, $\underset{z\in D}{\text{min }} d^2\mathcal{J}(z,\omega)$, sorted in increasing order, their corresponding inclusion shapes $\omega$ (represented by their angles), and their positions for the cube example of Fig.~\ref{fig:Cube2D} and $\underline f= \underline f^{(1)} = (15, 10, 5, 0)^T$.} 
\label{table:1}
\end{table}

\subsubsection{Overlapping cubes: 'T-junction' detection}
\label{t_junction_detection}
A second example is a 3D object formed by 'Overlapping Cubes' in a 2D image with six colors plus the background, see Fig.~\ref{fig_cube_example_vertices_classification}(b), where every region has a different intensity. This example covers the class 'T-junction' with three different configurations (i.e. different angles). Hereafter, we are mainly interested in detecting the triple junctions indicated by $T_1$, $T_2$, $T_3$, $T_4$ in Fig.~\ref{fig_cube_example_vertices_classification}(b).

Here, we choose the intensities depicted in Fig.~\ref{fig:OverlappingCubes}, $\underline f = \underline f^{(3)} = (15,5,10,30,20,25,0)^T$. Figure~\ref{fig_occluded_T1T2_T3}(a) and (b) show the second-order topological derivative for this setting with respect to the inclusion shapes representing junctions $T_1$ and $T_2$ (deg(P)=$0^\circ$, deg(Q)=$90^\circ$, deg(R)=$180^\circ$) and junction $T_3$ (deg(P)=$0^\circ$, deg(Q)=$180^\circ$, deg(R)=$270^\circ$), respectively. For Fig.~\ref{fig_occluded_T1T2_T3}(a), the globally minimal value coincides exactly with the location of the triple junction $T_1$. The vertex $T_2$ is not detected in this case as it has a weaker intensity jump compared to the dominating vertex $T_1$. Similarly, for Fig.~\ref{fig_occluded_T1T2_T3}(b) the globally minimal value coincides exactly with the location of triple junction $T_3$. It is a dominating point as its intensity jump is strong. Figure~\ref{fig_occluded_T4} shows the second-order topological derivative with respect to an inclusion shape given by the angles 
deg(P)=$45^\circ$, deg(Q)=$225^\circ$, deg(R)=$270^\circ$, attempting to identify triple junction $T_4$. In this case, the vertex is not detected as it has a weak intensity jump compared to the other vertices in the scene.  
\ifpics
\begin{figure}
\centering
\begin{tikzpicture}
    \node[anchor=south west,inner sep=0] at (-6.5,-0.5) {\includegraphics[scale=0.35]{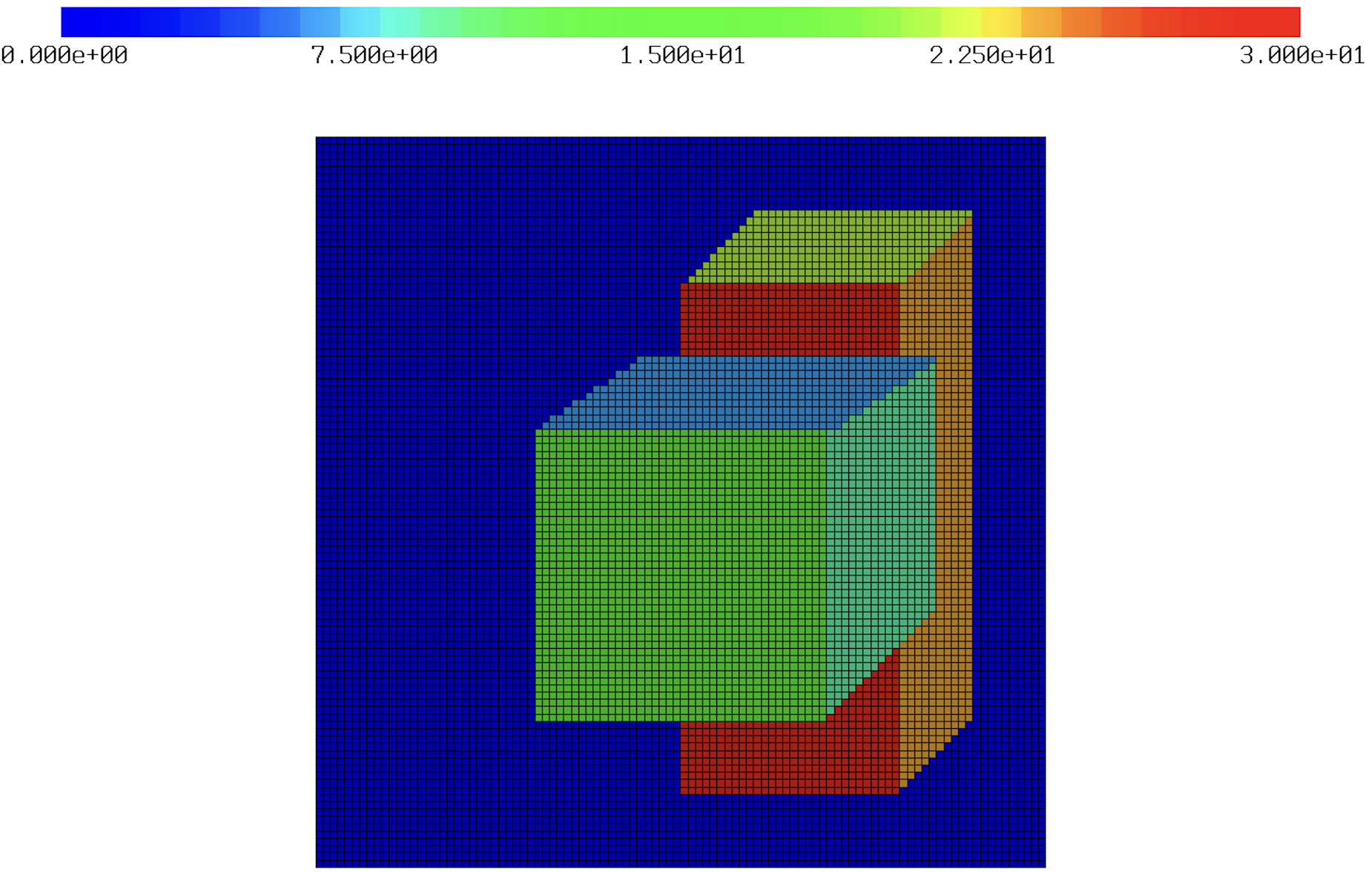}};
    \node[text=gray(x11gray)] at (-1,4) {$T_1$};
    \node[text=gray(x11gray)] at (.75,4) {$T_2$};
    \node[text=gray(x11gray)] at (-1,.5) {$T_3$};
    \node[text=gray(x11gray)] at (.75,1) {$T_4$};
\end{tikzpicture}
\caption{Overlapping Cubes.}
\label{fig:OverlappingCubes}
\end{figure}
\fi

\ifpics
\begin{figure}
\begin{tabular}{cc}
    \begin{tikzpicture}
    \node[anchor=south west,inner sep=0] at (-1.5,-0.5) {\includegraphics[scale=.23]{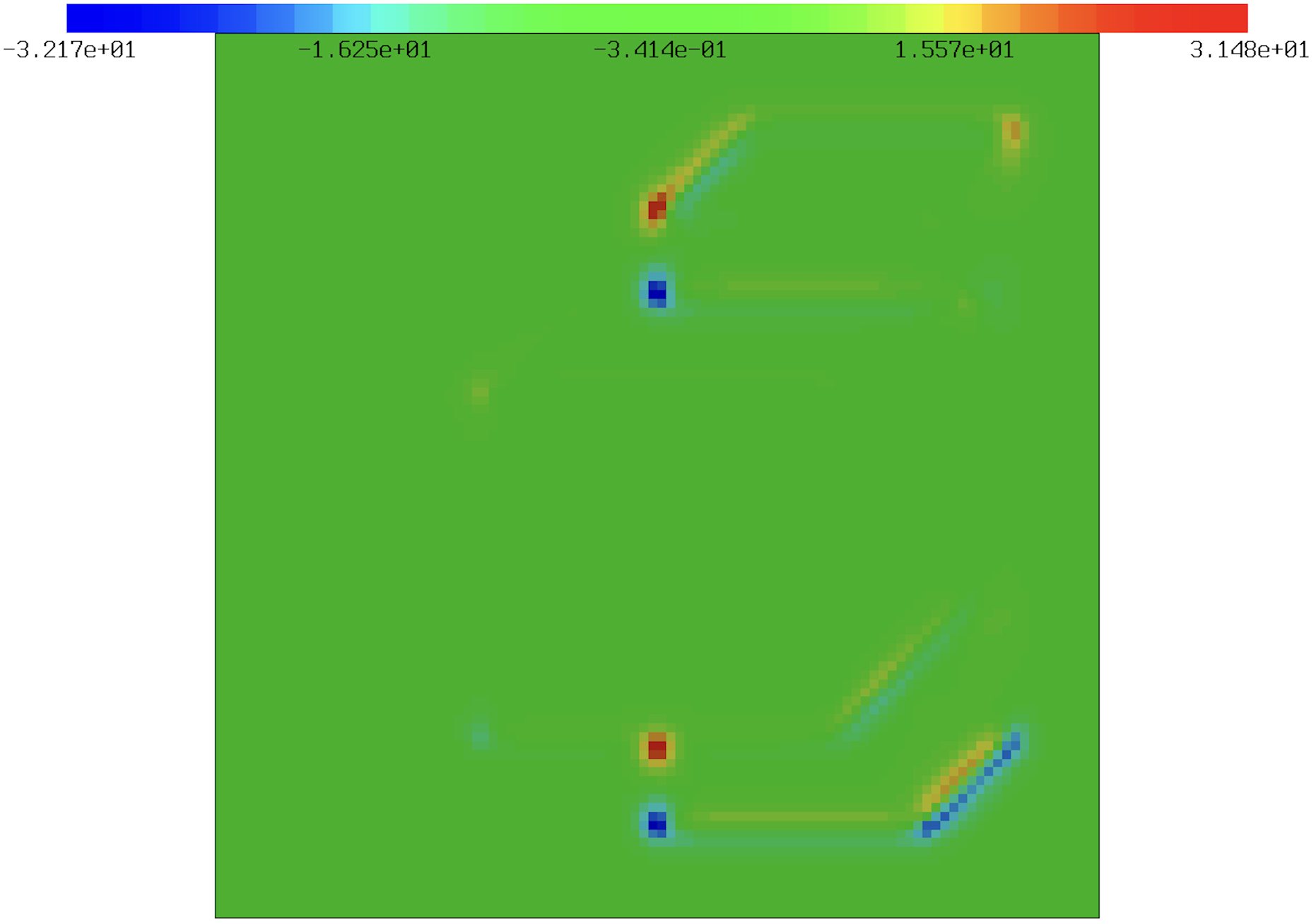}};
    \draw[gray(x11gray), ultra thick, fill] (0.,0.) circle (1.2);
    \draw[line width=3pt, black] (0,0) -- (1,0) (0,0) -- (0,1) (0,0) -- (-1,0); 
    \end{tikzpicture}
    &
    \begin{tikzpicture}
    \node[anchor=south west,inner sep=0] at (-1.5,-0.5) {\includegraphics[scale=.23]{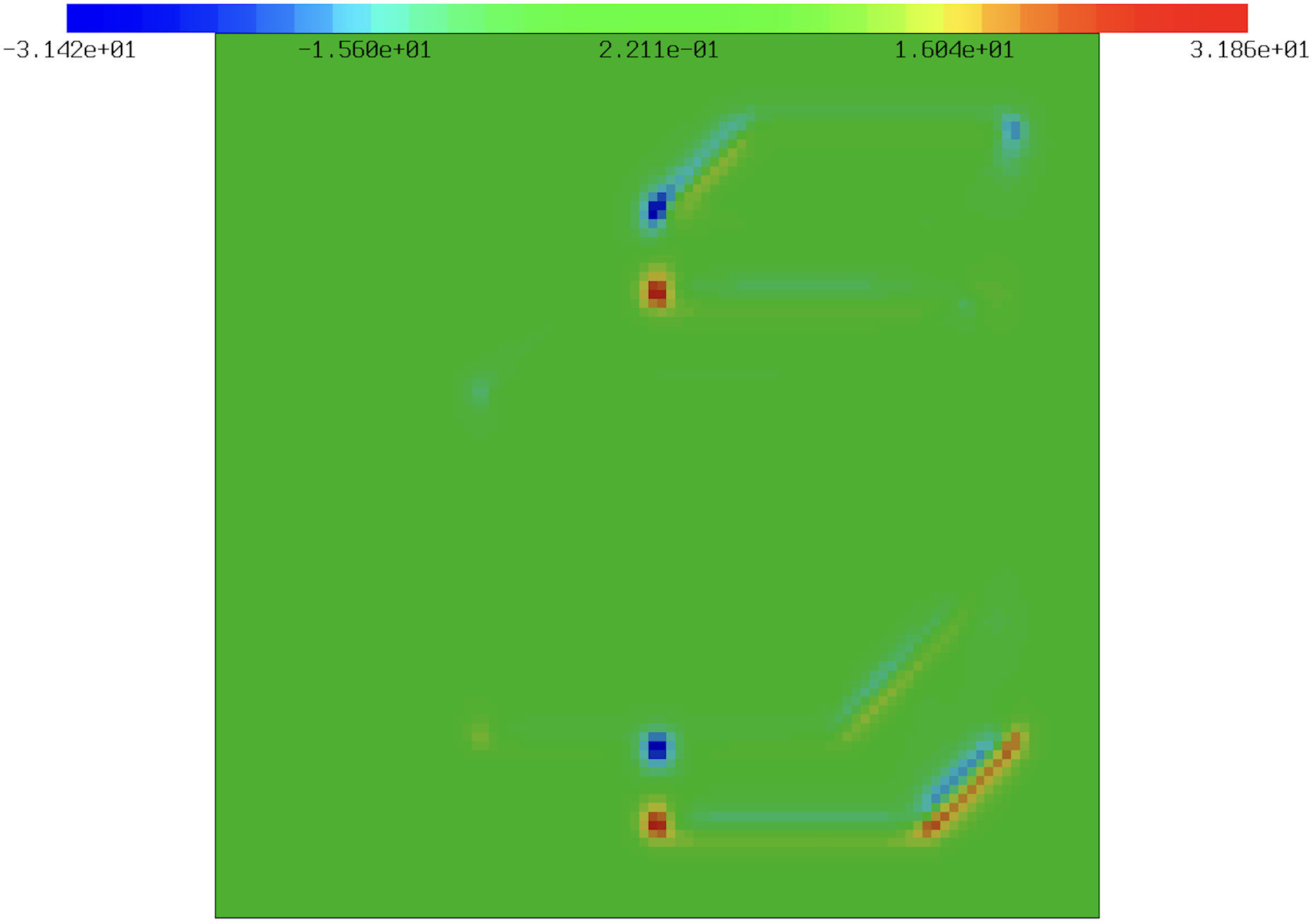}};
    \draw[gray(x11gray), ultra thick, fill] (0.,0.) circle (1.2);
    \draw[line width=3pt, black] (0,0) -- (1,0) (0,0) -- (-1,0) (0,0) -- (0,-1); 
    \end{tikzpicture} \\
    (a) & (b)
\end{tabular}
\caption{Second-order topological derivative $d^2 \mathcal J(\Omega)(\cdot, \omega)$ for $\underline f= \underline f^{(3)} = (15,5,10,30,20,25,0)^T$ and indicated inclusion shapes $\omega$: (a) deg(P)=$0^\circ$, deg(Q)=$90^\circ$, deg(R)=$180^\circ$, attempting to identify triple junctions $T_1$, $T_2$. (b) deg(P)=$0^\circ$, deg(Q)=$180^\circ$, deg(R)=$270^\circ$ to identify triple junction $T_3$.}
\label{fig_occluded_T1T2_T3}
\end{figure}
\fi

\ifpics
\begin{figure}
\centering
\begin{tabular}{c}
    \begin{tikzpicture}
    \node[anchor=south west,inner sep=0] at (-1.5,-0.5) {\includegraphics[scale=.23]{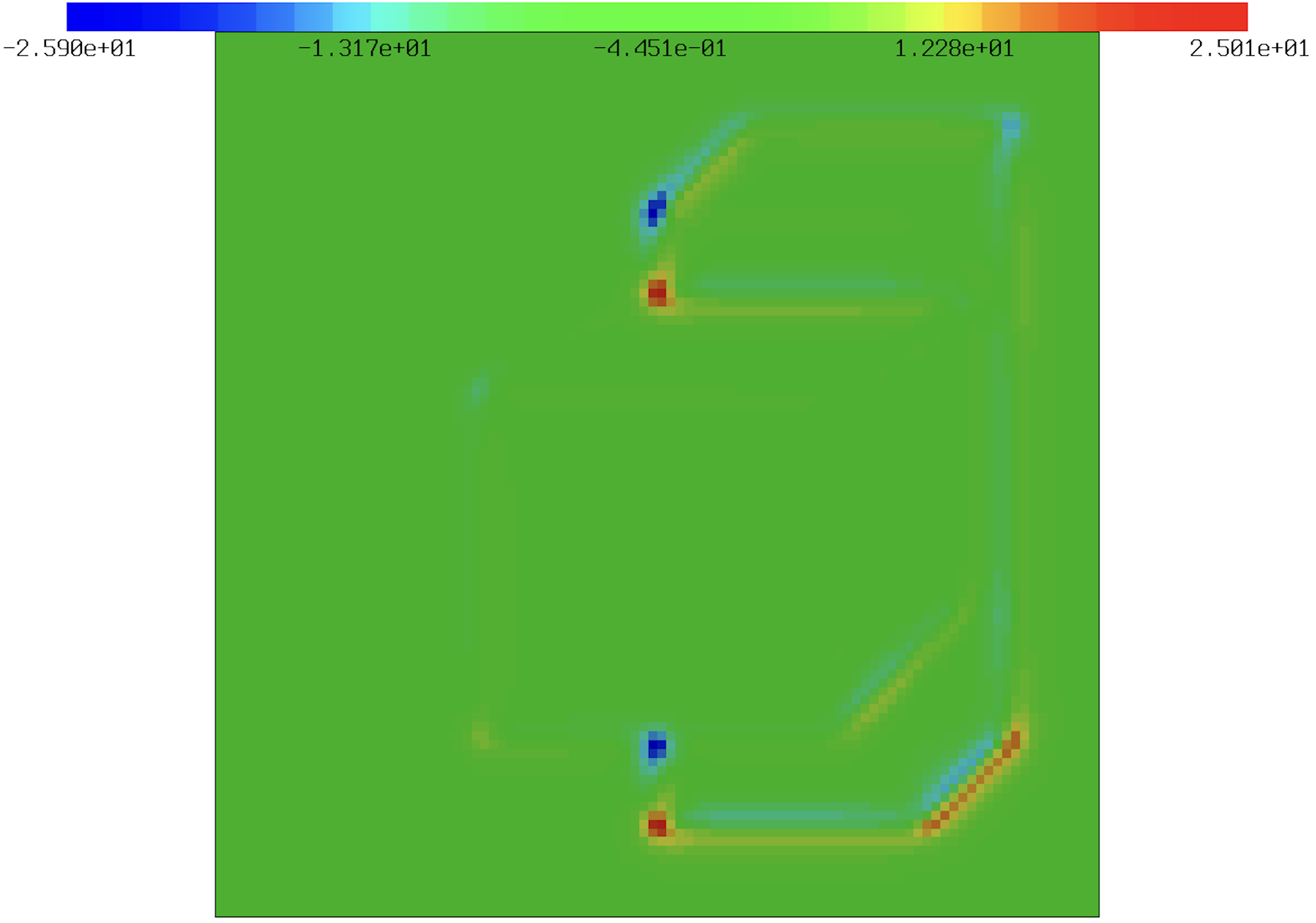}};
    \draw[gray(x11gray), ultra thick, fill] (0.,0.) circle (1.2);
    \draw[line width=3pt, black] (0,0) -- (1/1.41421356237,1/1.41421356237) (0,0) -- (-1/1.41421356237,-1/1.41421356237) (0,0) -- (0,-1); 
    \end{tikzpicture}
\end{tabular}
\caption{Second-order topological derivative $d^2 \mathcal J(\Omega)(\cdot, \omega)$ for $\underline f= \underline f^{(3)} = (15,5,10,30,20,25,0)^T$ and indicated inclusion shape $\omega$ with deg(P)=$45^\circ$, deg(Q)=$225^\circ$, deg(R)=$270^\circ$ attempting to identify triple junction $T_4$.}
\label{fig_occluded_T4}
\end{figure}
\fi

We attempt to also identify triple junctions $T_2$ and $T_4$ by modifying the subregions' intensities and thus the contrasts around the vertices to be identified. In order to identify $T_2$, we choose the vector of intensities $\underline f= \underline f^{(4)} = (10, 5, 15, 30,20,0, 25)^T$ and for identifying $T_4$, we choose $\underline f= \underline f^{(5)} = (10, 15, 20, 30,5,0,25)^T$. Figure~\ref{fig_occluded_T1T2f4_T4f5}(a) shows that, again using deg(P)=$0^\circ$, deg(Q)=$90^\circ$, deg(R)=$180^\circ$, vertex $T_2$ can be identified since the global minimum of the second-order topological derivative is located exactly in its neighboring pixels. In Fig.~\ref{fig_occluded_T1T2f4_T4f5}(b), we again choose deg(P)=$45^\circ$, deg(Q)=$225^\circ$, deg(R)=$270^\circ$ to identify $T_4$. While the location of the globally minimal value is still not at vertex $T_4$, it now is visible as one of the local minima of the second-order topological derivative. 
\ifpics
\begin{figure}
\begin{tabular}{cc}
    \begin{tikzpicture}
    \node[anchor=south west,inner sep=0] at (-1.5,-0.5) {\includegraphics[scale=.23]{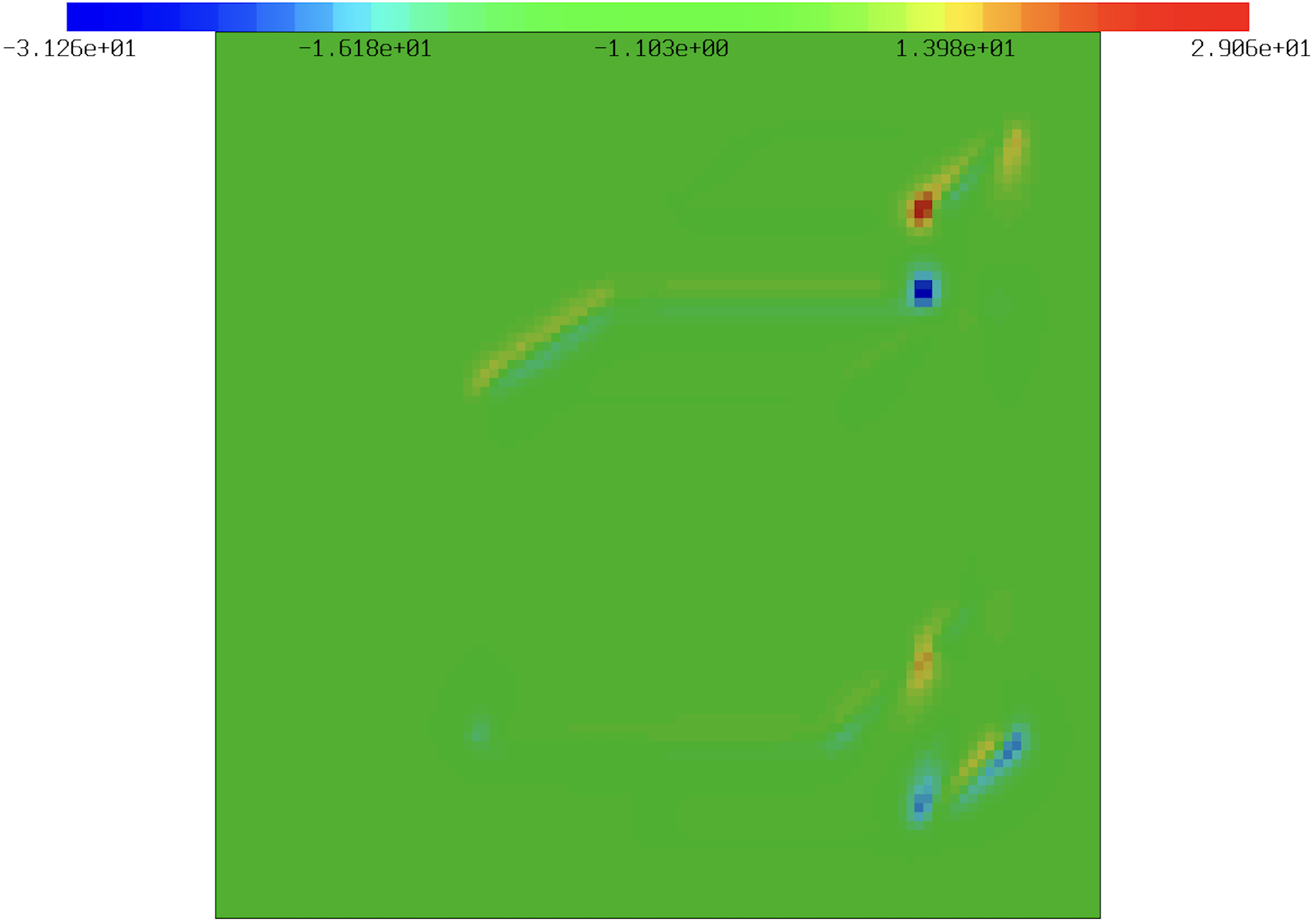}};
    \draw[gray(x11gray), ultra thick, fill] (0.,0.) circle (1.2);
    \draw[line width=3pt, black] (0,0) -- (1,0) (0,0) -- (0,1) (0,0) -- (-1,0); 
    \end{tikzpicture}
    &
    \begin{tikzpicture}
    \node[anchor=south west,inner sep=0] at (-1.5,-0.5) {\includegraphics[scale=.22]{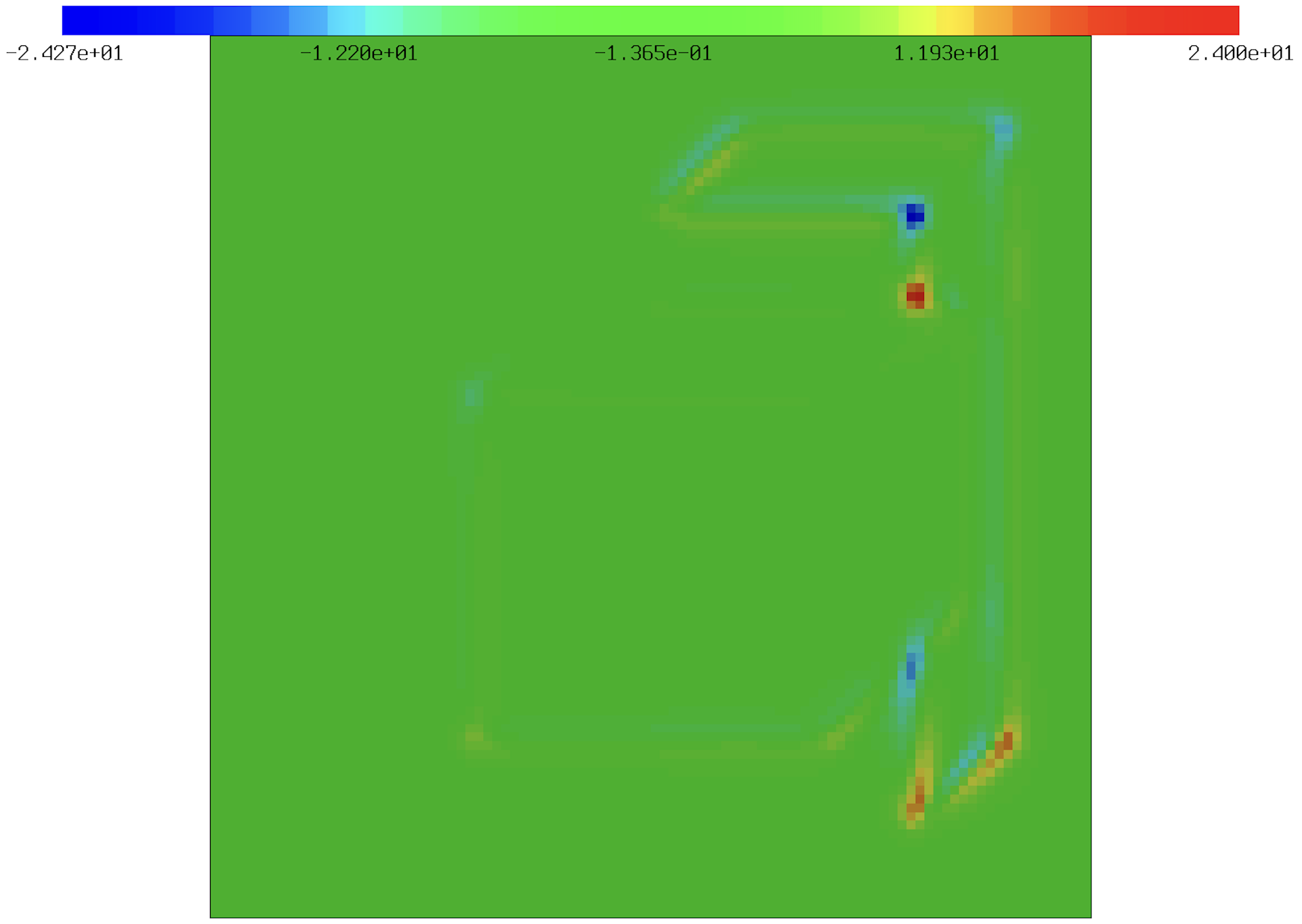}};
    \draw[gray(x11gray), ultra thick, fill] (0.,0.) circle (1.2);
    \draw[line width=3pt, black] (0,0) -- (1/1.41421356237,1/1.41421356237) (0,0) -- (-1/1.41421356237,-1/1.41421356237) (0,0) -- (0,-1); 
    \end{tikzpicture} \\
    (a) & (b)
\end{tabular}
\caption{Second-order topological derivative $d^2 \mathcal J(\Omega)(\cdot, \omega)$ and indicated inclusion shapes $\omega$: (a) $\underline f= \underline f^{(4)} = (10, 5, 15, 30,20,0, 25)^T$, deg(P)=$0^\circ$, deg(Q)=$90^\circ$, deg(R)=$180^\circ$, attempting to identify triple junctions $T_1$, $T_2$. (b) $\underline f= \underline f^{(5)} = (10, 20, 15, 30,5,0,25)^T$, deg(P)=$45^\circ$, deg(Q)=$225^\circ$, deg(R)=$270^\circ$ to identify triple junction $T_4$.}
\label{fig_occluded_T1T2f4_T4f5}
\end{figure}
\fi 

\section{Conclusion}
\label{conclusion}
In this work, we introduced a vertex detection method based on topological asymptotic analysis. We derived the second-order topological derivative of a Mumford-Shah-type functional with respect to arbitrary inclusion shapes, specifically focusing on vertices. We also discussed its numerical computation for certain polygonal inclusion shapes corresponding to various vertex classes. The effectiveness of the proposed one-shot detection technique was demonstrated through two examples, where accurate vertex localization was confirmed across different classes and angles. These examples illustrate well-known concepts in optical vision by applying topological derivatives. 

\appendix
\subsection*{Acknowledgements}
%
%
This research was funded in whole, or in part, by the Austrian Science
Fund (FWF) P 34981. For the purpose of open access, the author has applied
a CC BY public copyright licence to any Author Accepted Manuscript version arising
from this submission.
Moreover, O.S. is supported by the Austrian Science Fund (FWF),
with SFB F68, project F6807-N36 (Tomography with Uncertainties).
This work is partially supported by the joint DFG/FWF Collaborative Research Centre CREATOR (FWF Project DOI 10.55776/F90) at TU Darmstadt, TU Graz and JKU/RICAM Linz. P.G. is partially supported by the State of Upper Austria.
The financial support by the Austrian Federal Ministry for Digital and Economic Affairs, the National Foundation for Research, Technology and Development and the Christian Doppler
Research Association is gratefully acknowledged.
\newcommand{\etalchar}[1]{$^{#1}$}
\providecommand{\bysame}{\leavevmode\hbox to3em{\hrulefill}\thinspace}
\providecommand{\MR}{\relax\ifhmode\unskip\space\fi MR }
\providecommand{\MRhref}[2]{%
  \href{http://www.ams.org/mathscinet-getitem?mr=#1}{#2}
}
\providecommand{\href}[2]{#2}

\end{document}